\documentclass[11pt]{article}

\usepackage{array, amsmath, amsfonts, amsthm, amssymb, subcaption}
\usepackage{authblk, color, soul, bm, graphicx, empheq, enumitem, bbold}
\usepackage[dvipsnames]{xcolor}
\usepackage[margin = 1.5cm, font = small, labelfont = bf, labelsep = period]{caption}
\usepackage{xspace}
\usepackage[utf8]{inputenc}
\usepackage[T1]{fontenc}
\usepackage[round]{natbib}
\usepackage[ruled]{algorithm2e}
\usepackage{multirow}
\usepackage{mathrsfs}
\usepackage{comment}
\usepackage{eurosym}
\PassOptionsToPackage{hyphens}{url}\usepackage{hyperref}
\usepackage{accents}
\usepackage{booktabs}
\usepackage{mathtools}
\usepackage{multirow}
\usepackage{nicefrac}
\usepackage{cases}
\usepackage{csquotes}
\usepackage{float}
\usepackage{xcolor}
\usepackage{bibunits}

\usepackage{fullpage}[2cm]

\usepackage{cleveref}

\usepackage[colorinlistoftodos]{todonotes}
\usepackage{comment}

\usepackage{optidef}
\usepackage{etoolbox}
\robustify{\label}

\usepackage{pgfplots, pgfplotstable, tikz-3dplot}
\pgfplotsset{compat=1.18}
\usetikzlibrary {3d, arrows.meta, external}
\tikzexternaldisable
\PassOptionsToPackage{off}{epstopdf}

\definecolor{dodgerblue}{rgb}{0.118,0.565,1}
\definecolor{MITRed}{HTML}{750014}

\linespread{1.5}

\theoremstyle{plain}
\newtheorem{Th}{Theorem}
\newtheorem{Prop}{Proposition}

\theoremstyle{definition}
\newtheorem{Ex}{Example}
\newtheorem{Rmk}{Remark}
\newtheorem{Ass}{Assumption}
\crefname{Ass}{Assumption}{Assumptions}
\crefname{Prop}{Proposition}{Propositions}
\crefname{lem}{Lemma}{Lemmas}

\newcommand{\eg}{\textit{e.g.}}
\newcommand{\ie}{\textit{i.e.}}

\newcommand{\set}[1]{\mathcal{#1}}

\newcommand{\ubar}[1]{\underaccent{\bar}{#1}}

\newcommand{\K}{\mathcal{K}}

\newcommand{\T}{\mathcal{T}}

\newcommand{\R}{\mathbb R}

\newcommand{\xdn}{x^\downarrow}
\newcommand{\xup}{x^\uparrow}

\DeclarePairedDelimiter{\floor}{\lfloor}{\rfloor}
\DeclarePairedDelimiter{\ceil}{\lceil}{\rceil}

\pgfmathsetmacro{\x}{2.0}
\pgfmathsetmacro{\n}{0.65}
\pgfmathsetmacro{\lbd}{2.5}
\pgfmathsetmacro{\r}{0.1}
\pgfmathsetmacro{\m}{1.1}
\pgfmathsetmacro{\dn}{1/\n - \n}
\pgfmathsetmacro{\gridopa}{0.25}

\date{}

\allowdisplaybreaks

\begin{document}
\begin{bibunit}[abbrvnat]
\begin{center}
\LARGE
    Storage Participation in Electricity Markets: \\Time Discretization through Robust Optimization
\end{center}

\begin{center}
Dirk Lauinger, Luc Coté, Andy Sun

\footnotesize
    Energy Systems Optimization Group, Massachusetts Institute of Technology \\
    \href{mailto:lauinger@mit.edu}{lauinger@mit.edu},  \href{mailto:luccote@mit.edu}{luccote@mit.edu},  \href{mailto:sunx@mit.edu}{sunx@mit.edu}  
\end{center}

Electricity storage is used for intertemporal price arbitrage and for ancillary services that balance unforeseen supply and demand fluctuations via frequency regulation. We present an  optimization model that computes bids for both arbitrage and frequency regulation and ensures that storage operators can honor their market commitments at all times for all fluctuation signals in an uncertainty set inspired by market rules. This requirement, initially expressed by an infinite number of nonconvex functional constraints, is shown to be equivalent to a finite number of deterministic constraints. The resulting formulation is a mixed-integer bilinear program that admits mixed-integer linear relaxations and restrictions. Empirical tests on European electricity markets show a negligible optimality gap between the relaxation and the restriction. The model can account for intraday trading and, with a solution time of under 5~seconds, may serve as a building block for more complex trading strategies. Such strategies become necessary as battery capacity exceeds the demand for ancillary services. In a backtest from 1 July 2020 through 30 June 2024 joint market participation more than doubles profits and almost halves energy output compared to no FCR participation.
\vspace{0.2cm}
		
\textit{Key words:} arbitrage, frequency regulation, electricity storage, robust optimization, continuous-time constraints.

\section{Introduction}
\subsection{Background: Storage deployment in electricity markets}

Until recently, large-scale electricity storage remained elusive. As cultural anthropologist Gretchen Bakke observes in \emph{The Grid: The Fraying Wires Between Americans and Our Energy Future}, grid-scale storage, essential for integrating intermittent renewable generation and enabling efficient market operations, was remarkably limited:\emph{``some artificial lakes, one compressed air plant, three molten salt towers, eight solar trough plants, and a lot of dreams about batteries''} \citeyearpar[p.~225]{GAB16}. Since then, battery technology and manufacturing improved, prices decreased \citep{MZ21}, and batteries have been saturating high-value low-volume electricity markets. For example, the deployment of battery storage for frequency containment reserves (FCR), which grid operators contract as ancillary services to balance unforeseen fluctuations in supply and demand, started at scale in~2020 in Germany. Three years later, storage capacity exceeded FCR demand, depressing prices \citep{figgener2022development}. Similar effects have been observed in the US and the UK~\citep{mastropietro2024taxonomy}. Ancillary services are appealing because remuneration is based on the capacity to produce energy on short notice, rather than on actual production~\citep{kempton2005vehicle}. Compared to arbitrage, \ie, storing energy for resale at a higher price, ancillary services require less energy output, which benefits battery health~\citep{vetter2005ageing}. However, as storage capacity  exceeds the demand for ancillary services, prices decrease \citep{deman2025day}, making it increasingly advantageous to participate in both arbitrage and ancillary services~\citep{figgener2022development, trueman2026bess}.

\subsection{Research Gap and Prior Work}

We study \emph{how storage may bid for both FCR and arbitrage}. To this end, we formulate an optimization problem that jointly determines market bids and ensures that physical constraints on storage systems are satisfied at all times for all FCR signals in an uncertainty set inspired by market rules~\citep{EU17}. This formulation is nonconvex due to charging and discharging losses and is subject to functional uncertainty in continuous-time constraints that arise from the market rules. The problem thus incorporates two storage operations challenges outlined by~\cite{parker2019electric} in their survey of the electric power industry: multi-market participation and stochasticity.

To our knowledge, there is no mathematical theory that directly addresses this problem. Most prior work adopts a discretize-then-optimize approach, wherein the continuous dynamics are first discretized and then optimized. This approach introduces systemic inaccuracies as it tends to overestimate the minimum and terminal state-of-charge (SOC), and underestimate the maximum SOC \citep[Example~1]{my_v2g_lp}. These inaccuracies may result in bids that are infeasible in practice, exposing operators to financial penalties or market exclusion.

\cite{my_v2g_as, my_v2g_lp} incorporate continuous-time constraints in FCR bidding but either restrict arbitrage to a single trading interval or to covering FCR-related energy losses. We generalize their models by allowing for unrestricted arbitrage across the full planning horizon. While this may appear to be a modest extension, it introduces substantial complexity: without arbitrage, optimal bids can be computed via linear programming~\citep[Theorem~3]{my_v2g_lp}, whereas including arbitrage leads to a mixed-integer bilinear reformulation, as we will see in Theorem~\ref{th:P}. A central modeling challenge lies in accounting for charging and discharging losses, which cause the SOC to be nonlinear in the market bids and FCR signals. Consequently, worst-case FCR signals need not align temporally with market decisions, which invalidates the common assumption of piecewise-constant signals over fixed time intervals, as we will see in Example~\ref{ex:time_discretization}.

Most prior studies assume linear storage models with the SOC being affine in the FCR signal. In this setting, the discretize-then-optimize approach is exact under a technical condition on the uncertainty set~\citep[Proposition~7]{my_v2g_lp}. This property also holds under common battery degradation models~\citep[Proposition~1]{bae2025stacking}.
The textbook approach for achieving linearity is to relax complementarity constraints that prohibit simultaneous charging and discharging. When prices are positive, this relaxation is tight, as simultaneous charging and discharging would be suboptimal due to increased energy losses~\citep[p.~84]{JAT15}. However, in the presence of negative electricity prices, which are increasingly common \citep{seel2021plentiful, biber2022negative}, or FCR bids constrained by limited headroom, \ie, the difference between storage capacity and the SOC, it may be optimal to incur energy losses to enable increased arbitrage or FCR gains. Alternatively, some works assume lossless storage to achieve linearity, though this may overstate arbitrage profitability.

In practice, \cite{anderson2017co}, \cite{kaya2024delivering}, and \cite{schindler2024planner} use the textbook approach for multistage stochastic multimarket storage optimization. \cite{schindler2024planner} consider hydro plants with separate pumps and turbines, where the relaxation is exact if the two can operate simultaneously. \cite{lohndorf2023value} and \cite{seifert2024coordinated} assume lossless storage for similar models. \cite{zhang2025joint} impose the complementarity constraints through binary variables and assume piecewise-constant FCR signals. Their approach captures nonlinear SOC dynamics but still risks underestimating the maximum SOC. In contrast, \cite{cheng2016co} account for nonlinear storage in a dynamic programming model that, given cleared market bids, decides how closely to follow FCR signals. By modeling these signals at the mandated sampling rate of 10s, they maintain an accurate SOC at the expense of a high-dimensional state space. 

In summary, prior work either (\emph{i}) assumes linear storage and optimizes on the timescale of market decisions (minutes to hours) at low computational cost, or (\emph{ii}) uses nonlinear models and optimizes on the timescale of the FCR signal (seconds) at high computational cost. Our work bridges the gap by focusing on nonlinear models on the timescale of market decisions. The goal is to capture nonlinear SOC dynamics at low computational cost.

\subsection{Research Questions and Contributions}
We will answer two research questions: ($i$) How to capture nonlinear SOC dynamics in joint arbitrage and FCR participation on the timescale of market decisions; and ($ii$) What is the effect of joint market participation on profits and energy output? In answering these questions, we make the following contributions:
\begin{enumerate}
    \item \textbf{Exact finite-dimensional reformulation}: We derive an exact finite-dimensional reformulation of a nonconvex multimarket optimization problem with continuous time constraints and functional uncertainty. The resulting mixed-integer bilinear optimization problem computes bids that are guaranteed to be feasible under a nonlinear storage model with charging and discharging losses, improving upon the feasibility guarantees of linear models. The reformulation discretizes time on the scale of market decisions, yielding smaller models than those operating at the FCR signal level. Notably, our derivation reveals that modeling continuous-time constraints requires just one additional linear constraint per trading interval. This is surprising given that continuous-linear programs are typically much harder to solve than their finite-dimensional counterparts.
    \item \textbf{Fast near-optimal reformulations}: We derive a mixed-integer relaxation and restriction of the exact bilinear model. These approximations provide near-optimal solutions in a four-year backtest. In particular, the restriction computes market bids within 5s on average when balancing FCR-induced SOC fluctuations with intraday trading.
    \item \textbf{Four-year backtest}: We backtest our model on data from 1 July 2020 through 30 June 2024. This extends prior studies, which are typically limited to a single year~\citep{cheng2016co, schindler2024planner, seifert2024coordinated} or predate~2021~\citep{anderson2017co, my_v2g_lp}. Multi-year analysis is critical given the accelerating deployment of battery storage and the evolving geopolitical and regulatory context. We find that joint participation in FCR and arbitrage more than doubles profits and almost halves energy output compared to arbitrage alone. Compared to FCR alone, joint participation increases profits by~14\% and more than doubles energy output.
\end{enumerate}

\subsection{Structure}
The paper unfolds as follows. Section~\ref{sec:description} introduces the optimization problem. Section~\ref{sec:reformulation} develops an exact mixed-integer bilinear finite-dimensional reformulation. Section~\ref{sec:tractability} presents cases in which the reformulation is tractable and derives a mixed-integer linear relaxation and restriction. Section~\ref{sec:intraday} shows how the model can accommodate multimarket arbitrage. Section~\ref{sec:app} presents numerical results for several European countries, followed by the conclusion. We describe the  intuition behind mathematical statements in the main paper and relegate all formal proofs to  Appendix~\ref{sec:proofs}.
\paragraph{Notation.} Define $\set{T} = [0,T] = \bigcup_{k=1}^K T_k$ with non-overlapping intervals $\T_k = [(k-1)\Delta t, k\Delta t)$, for $k < K$, and $\T_K = [T - \Delta t, T]$, all of length $\Delta t>0$, $T=K\Delta t$, and $K \in \mathbb{N}$. Let $\set{U}$ be a subset of the real line $\R$. Let $\set{F}(\set{T},\set{U})$
denote the space of all functions $f:\set{T} \to \set{U}$ that are piecewise constant on the intervals $\T_k$. Let $\set{R}(\set{T},\set{U})$ denote the space of all Riemann integrable functions $f:\set{T} \to \set{U}$. Let $\bm A \odot \bm B$ denote the Hadamart product, \ie, the element-wise product of two matrices $\bm A$ and $\bm B$. For any $z \in \mathbb{R}$, define $[ z ]^+ = \max \{z,0\}$ and $[z]^- = \max\{-z,0\}$. 

\section{Problem Description}\label{sec:description}
\subsection{Power Output and SOC}
Consider a storage device whose SOC and power output must be between $\ubar y \in [0,+\infty)$ and~$\bar y \in [\ubar y, +\infty)$ and between $\ubar x \in (-\infty, 0]$ and $\bar x \in [0, +\infty)$, respectively. If the power output is positive, the SOC decreases at a rate of $\frac{1}{\eta^\mathrm{d}}$ times the output, where $\eta^\mathrm{d} \in (0,1)$ is the discharging efficiency of the device. If the output is negative, the SOC increases at a rate of $\eta^\mathrm{c}$ times the magnitude of the output, where $\eta^\mathrm{c} \in (0,1)$ is the charging efficiency of the device.

Over a planning horizon $\set{T} = [0,T]$ of length~$T$ comprised of $K$ trading intervals $\T_k = [(k-1)\Delta t, k\Delta t)$ with length $\Delta t$ and $k = 1,\ldots,K$, the device is used for arbitrage and ancillary services, specifically up- and downregulation. Before the beginning of the planning horizon, the storage operator decides on how much power~$x^0 \in \set{F}(\set{T}, \mathbb{R})$, $\xup \in \set{F}(\set{T}, \mathbb{R}_+)$, and $\xdn \in \set{F}(\set{T}, \mathbb{R}_+)$ to sell for arbitrage, upregulation, and downregulation, respectively. The actual power output depends on the regulation signal~$\xi \in \set{R}(\set{T}, [-1,1])$, which is observed as it unfolds over time. Formally, the power output at any time $t \in \set{T}$ is given by $x : \mathbb{R} \times \mathbb{R}^2_+ \times \mathbb{R} \to \mathbb{R}$,
\begin{equation}
    \label{eq:x}
        x(x^0(t), x^\uparrow(t), x^\downarrow(t), \xi(t)) 
        = x^0(t) + \left[\xi(t)\right]^+ x^\uparrow(t) - \left[\xi(t)\right]^- x^\downarrow(t),
\end{equation}
where the regulation signal $\xi(t)$ determines the proportion of the planned up- and downregulation that is actually produced. Due to complementarity, $[\xi(t)]^+\cdot[\xi(t)]^-=0$, up- and downregulation are never produced simultaneously.

The SOC is given by $y : \set{F}(\set{T}, \mathbb{R}) \times \set{F}(\set{T}, \mathbb{R}_+)^2 \times \set{R}(\set{T}, [-1,1]) \times \mathbb{R}_+ \times \T \to \mathbb{R}$,
\begin{equation}
\label{eq:y}
\begin{aligned}
    y( x^0, x^\uparrow, x^\downarrow, \xi, y_0, t )
    =
    y_0 + \int_0^t &
    \eta^\mathrm{c} \left[ x(x^0(\tau), x^\uparrow(\tau), x^\downarrow(\tau), \xi(\tau))  \right]^- \\
    &- \frac{1}{\eta^\mathrm{d}} \left[ x(x^0(\tau), x^\uparrow(\tau), x^\downarrow(\tau), \xi(\tau))  \right]^+ 
    \, \mathrm{d}\tau
\end{aligned}
\end{equation}
where $y_0$ is the SOC at time $t = 0$. Note that $y$ is a \emph{functional} of $x_0, \xup, \xdn, \xi$, which are themselves functions in the sets $\set{F}(\set{T},\R)$ and $\set{R}(\set{T}, [-1,1])$. As $\set{F}(\set{T},\R)$ is endowed with a real vector space structure, we can discuss convexity of $y(x_0, \xup, \xdn, \xi, y_0, t)$ in $x_0, \xup, \xdn$ for any fixed $t\in \T$.

The following propositions characterize the power output and SOC functions.

\begin{Prop}\label{prop:x}
    The power output function $x$ is affine increasing in~$x^0(t)$, affine nondecreasing in~$x^\uparrow(t)$, affine nonincreasing in~$x^\downarrow(t)$, and nondecreasing in~$\xi(t)$ for any $t \in \T$.
\end{Prop}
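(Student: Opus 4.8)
The statement is really four claims about the single scalar map in~\eqref{eq:x}, so the plan is to fix a time $t\in\T$ and treat each of the four arguments in turn, holding the other three fixed; throughout, write $a=x^0(t)$, $u=x^\uparrow(t)\ge 0$, $d=x^\downarrow(t)\ge 0$, $s=\xi(t)\in[-1,1]$, and recall that $[s]^+=\max\{s,0\}\ge 0$ and $[s]^-=\max\{-s,0\}\ge 0$ with $[s]^+[s]^-=0$.

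For the dependence on $x^0(t)$: with $u,d,s$ fixed, $x = a + \big([s]^+u-[s]^-d\big)$ is affine in $a$ with slope exactly~$1>0$, hence affine increasing. For the dependence on $x^\uparrow(t)$: with $a,d,s$ fixed, $x=[s]^+u+\big(a-[s]^-d\big)$ is affine in $u$ with slope $[s]^+\ge 0$, hence affine nondecreasing; symmetrically, $x=-[s]^-d+\big(a+[s]^+u\big)$ is affine in $d$ with slope $-[s]^-\le 0$, hence affine nonincreasing. These three are immediate once the sign of the coefficients is read off from the definition of $[\cdot]^{\pm}$.

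The only part requiring a short argument is the dependence on $\xi(t)$. Fix $a,u,d$ and set $g(s):=a+[s]^+u-[s]^-d$. Using $[s]^+=s,\,[s]^-=0$ for $s\ge 0$ and $[s]^+=0,\,[s]^-=-s$ for $s<0$, I would rewrite $g$ explicitly as the two–piece piecewise linear function
\begin{equation*}
    g(s)=\begin{cases} a + d\,s, & s\in[-1,0),\\[2pt] a + u\,s, & s\in[0,1],\end{cases}
\end{equation*}
which is continuous at $s=0$ (both pieces equal $a$) and has slopes $d\ge 0$ on the left piece and $u\ge 0$ on the right piece. Nonnegativity of both slopes gives that $g$ is nondecreasing. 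For the convex/concave dichotomy, note that a continuous piecewise linear function of one variable with a single breakpoint is convex iff its slope is nondecreasing across the breakpoint and concave iff the slope is nonincreasing: here the slope jumps from $d$ to $u$ at $s=0$, so $g$ is convex when $u\ge d$ and concave when $u\le d$ (and affine, hence both, when $u=d$). In every case $g$ is either convex or concave, which is exactly the claim. Adding back the constant $a=x^0(t)$ (already incorporated above) changes none of these properties.

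\textbf{Anticipated difficulty.} There is no real obstacle; the proof is a case split. The only point deserving care is making the convex-or-concave assertion precise: it is not that $x$ is convex in $\xi(t)$ for all parameters, nor concave for all parameters, but that for each fixed $(x^0(t),x^\uparrow(t),x^\downarrow(t))$ exactly one of the two holds, determined by the sign of $x^\uparrow(t)-x^\downarrow(t)$. I would state this dependence explicitly so the dichotomy is not misread as a universal convexity (or concavity) claim.
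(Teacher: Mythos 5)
Your proof is correct and follows the same reasoning as the paper, which simply compresses the entire argument into the observation that $x^\uparrow$ and $x^\downarrow$ are nonnegative; your case split on the sign of $\xi(t)$ and the resulting slope comparison $u$ versus $d$ is exactly the detail that observation is shorthand for. Your explicit remark that convexity or concavity in $\xi(t)$ is determined by the sign of $x^\uparrow(t)-x^\downarrow(t)$ is a useful clarification the paper leaves implicit.
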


\begin{Prop}\label{prop:y}
    The SOC function $y(x_0, \xup, \xdn, \xi, y_0,t)$ is concave decreasing in~$x^0$, concave nonincreasing in~$\xup$, concave nondecreasing in~$\xdn$, and nonincreasing in~$\xi$ for any $t\in\T$.
\end{Prop}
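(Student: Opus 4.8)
The plan is to reduce everything to elementary properties of the scalar map $g:\R\to\R$, $g(z)=\eta^{\mathrm{c}}[z]^- - \tfrac{1}{\eta^{\mathrm{d}}}[z]^+$, in terms of which $y(x^0,\xup,\xdn,\xi,y_0,t)=y_0+\int_0^t g\bigl(x(x^0(\tau),\xup(\tau),\xdn(\tau),\xi(\tau))\bigr)\,\mathrm{d}\tau$. First I would record the explicit form $g(z)=-z/\eta^{\mathrm{d}}$ for $z\ge 0$ and $g(z)=-\eta^{\mathrm{c}}z$ for $z\le 0$, from which $g$ is continuous, piecewise linear, strictly decreasing (both slopes $-1/\eta^{\mathrm{d}}$ and $-\eta^{\mathrm{c}}$ are negative), and concave (the slope drops from $-\eta^{\mathrm{c}}$ to $-1/\eta^{\mathrm{d}}$ at the origin, since $1/\eta^{\mathrm{d}}>1>\eta^{\mathrm{c}}$).

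Next I would push these properties through the composition $g\circ x$ pointwise in $\tau$, invoking Proposition~\ref{prop:x}. Since $x$ is affine and increasing in $x^0(\tau)$ and $g$ is concave and decreasing, $g\circ x$ is concave and decreasing in $x^0(\tau)$; likewise $x$ is affine and nondecreasing (resp.\ nonincreasing) in $\xup(\tau)$ (resp.\ $\xdn(\tau)$), so $g\circ x$ is concave and nonincreasing in $\xup(\tau)$ and concave and nondecreasing in $\xdn(\tau)$ --- here I use the standard rule that a concave nonincreasing function composed with an affine map is concave, together with the fact that a decreasing function composed with a monotone map inherits the opposite (resp.\ same) monotonicity. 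For $\xi$ I only claim monotonicity: $x$ is nondecreasing in $\xi(\tau)$ and $g$ is decreasing, so $g\circ x$ is nonincreasing in $\xi(\tau)$; no convexity or concavity conclusion is available because $x$ may be either convex or concave piecewise linear in $\xi(\tau)$, which is precisely why Proposition~\ref{prop:y} asserts no curvature in $\xi$.

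Finally I would lift the pointwise statements to the functional level and integrate. For concavity, writing $x^0=\lambda x^0_a+(1-\lambda)x^0_b$ gives $x^0(\tau)=\lambda x^0_a(\tau)+(1-\lambda)x^0_b(\tau)$ for every $\tau$, so pointwise concavity of $g\circ x$ in $x^0(\tau)$ yields an inequality on $[0,t]$ preserved by integration; adding the constant $y_0$ is harmless, and the same argument applies verbatim to $\xup$ and $\xdn$. For monotonicity, a pointwise (a.e.) inequality between the arguments produces a pointwise inequality between the integrands by the scalar monotonicity just established, and integrating over $[0,t]$ transfers it to $y$; strictness of the dependence on $x^0$ comes from $g$ being strictly decreasing. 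Throughout, one must check that the integrands remain Riemann integrable --- which they are, since $x^0,\xup,\xdn$ are piecewise constant, $\xi$ is Riemann integrable, and $g$ and $x$ are continuous and piecewise linear --- so every integral above is well defined. I expect this last bookkeeping step (lifting pointwise curvature/monotonicity to the function-space variables and verifying integrability) to be the only place requiring care; the mathematical substance is entirely contained in the one-line description of $g$ and Proposition~\ref{prop:x}.
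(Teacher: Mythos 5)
Your proposal is correct and follows essentially the same route as the paper: your scalar map $g(z)=\eta^{\mathrm{c}}[z]^- -\tfrac{1}{\eta^{\mathrm{d}}}[z]^+$ is exactly the paper's rewriting of the integrand as $\min\{-\eta^{\mathrm{c}}x,\,-x/\eta^{\mathrm{d}}\}$ (valid since $0<\eta^{\mathrm{c}}\le 1/\eta^{\mathrm{d}}$), and the rest is the same composition-with-affine-maps argument plus the fact that integration preserves concavity and monotonicity. The only difference is presentational (explicit slopes of $g$ versus "minimum of affine functions"), so no further changes are needed.
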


Proposition~\ref{prop:x} follows directly from $x^\uparrow$ and $x^\downarrow$ being nonnegative. For Proposition \ref{prop:y}, the concavity properties of $y$ hold because only a fraction $0 \leq \eta^\mathrm{c} \leq 1$ of a negative power output enters the device, while a multiple $\frac{1}{\eta^\mathrm{d}} \geq 1$ of a positive output leaves the device (see Figure \ref{fig:x_vs_ydot}). The monotonicity properties in Proposition~\ref{prop:y} hold because the SOC is decreasing in power output. 

We define \emph{specific loss} as $\Delta \eta = 1/\eta^\mathrm{d} - \eta^\mathrm{c}$, which is the difference between the change in the SOC after discharging and charging a unit amount of energy. In the absence of charging and discharging losses (\ie, $\eta^\mathrm{d}=\eta^\mathrm{c}=1$), $\Delta \eta$ vanishes. As losses increase, so does $\Delta \eta$, see Figure~\ref{fig:dn_err}. The power flow into storage $\eta^\mathrm{c} x^- - x^+/\eta^\mathrm{d}$ is thus a concave piecewise linear decreasing function in the power output~$x$ as depicted in Figure~\ref{fig:x_vs_ydot}. By this property, the SOC function can be written as
\begin{equation}
\label{eq:ymin}
    \begin{aligned}
      & y(x_0, \xup, \xdn, \xi, y_0,t) \\
= & y_0 + \int_0^t \min\left\{
    -\eta^\mathrm{c} x\left(x^0(\tau), x^\uparrow(\tau), x^\downarrow(\tau), \xi(\tau)\right),
    - \frac{x\left(x^0(\tau), x^\uparrow(\tau), x^\downarrow(\tau), \xi(\tau)\right)}{\eta^\mathrm{d}} \right\}
    \, \mathrm{d}\tau.
    \end{aligned}
\end{equation}

\begin{figure}[t]
\centering
\begin{subfigure}{0.55\textwidth}
\centering
    \begin{tikzpicture}[font=\scriptsize]
        \begin{axis}[
            height=4cm,
            width=9cm,
            ymin = 0, ymax = 0.7,
            ytick distance = 0.1,
            xmin = 0.5, xmax = 1,
            xtick distance = 0.05,
            yticklabel style={
            /pgf/number format/fixed,
            /pgf/number format/precision=1
            },
            xlabel={Roundtrip efficiency $\eta^\mathrm{c} \eta^\mathrm{d}$ (-)},
            ylabel={Specific loss $\Delta \eta$ (-)},
            grid=both, 
            legend style={
            cells={anchor=west},
            }, 
        ]
        \addplot+[no marks, opacity=1] table [x=nsq, y=dn, col sep=space] {data/dn.txt};
        \end{axis}
    \end{tikzpicture}
    \caption{Specific loss for $\eta^\mathrm{c} = \eta^\mathrm{d}$.}
    \label{fig:dn_err}
\end{subfigure}
\hfill
\begin{subfigure}{0.35\textwidth}
\centering
\resizebox{!}{3.75cm}{
    \begin{tikzpicture}[font=\small]
        \draw[-Stealth, opacity = 0.25] (0, -3) -- (0, 1) node[above, text opacity = 1] {Power to storage};
        \draw[-Stealth, opacity = 0.25] (-1.5, 0) -- (1.5, 0) node[right, text opacity = 1] {Power output $x$};
        \draw[-, blue] (-1.5, 0.75) -- (0, 0) --(1.5, -3) node[right, text opacity = 1, black] {with losses};
        \draw[-, opacity = 1] (-0.75, 0.75) -- (1.5, -1.5) node[right, text opacity = 1] {without losses};
        \fill[blue, opacity = 0.1] (-1.5, 0.75) -- (-0.375, 0.75) -- (0, 0) -- cycle;
        \fill[blue, opacity = 0.1] (1.5, -3) -- (1.5, -0.75) -- (0, 0) -- cycle;
        \draw[dashed, opacity = 0.25] (-1.25, 0.625) -- (-1.25, -0.1) node[below, text opacity = 1] {$\ubar x$};
        \draw[dashed, opacity = 0.25] (-.25, -0.075) node[below, text opacity = 1] {$0$};
        \draw[dashed, opacity = 0.25] (1.25, -2.5) -- (1.25, 0.1) node[above, text opacity = 1] {$\bar x$};
        \draw[dashed, opacity = 0.25] (1.25, -2.5) -- (-0.1, -2.5) node[left, text opacity = 1] {$- \frac{\bar x}{\eta^\mathrm{d}}$};
        \draw[dashed, opacity = 0.25] (-1.25, 0.625) -- (0.1, 0.625) node[right, text opacity = 1] {$-\eta^\mathrm{c} \ubar x$};
    \end{tikzpicture}
    }
    \caption{Power to storage vs power output.}
    \label{fig:x_vs_ydot}
\end{subfigure}
\caption{Specific loss and power to storage vs power output.}
\label{fig:dn_y_vs_x}
\end{figure}
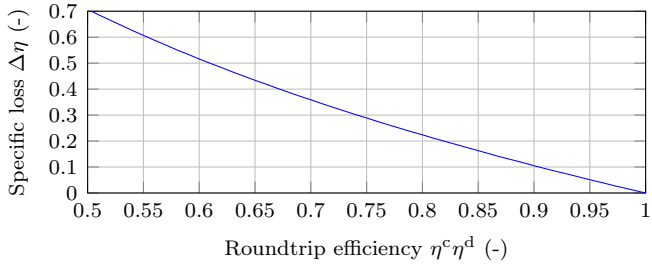
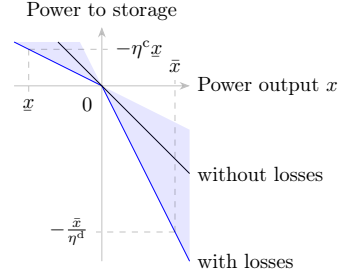

\subsection{Functional Uncertainty Set for Regulation Signals}
The storage operator must be able to provide arbitrage and regulation power for all regulation signals with 1-norm no greater than a deviation time budget~$\gamma \in [0,T]$, \ie, for all signals in the functional uncertainty set
\begin{equation}\label{eq:Xi}
    \Xi = \left\{ \xi \in \set{R}(\set{T}, [-1,1]): \int_{\set{T}} \vert \xi(t) \vert \, \mathrm{d}t \leq \gamma \right\},
\end{equation}
which is inspired by applicable EU~market rules~\citep[Art.~156]{EU17}. For FCR, a specific type of up- and downregulation, the rules require that reserve providers be able to provide all of the reserve power they promised for a minimum amount of time~$\gamma$ over a time horizon~$T$, which limits the time during which a signal $\xi$ may adopt an extreme value of~$-1$ or~$1$. Storage operators should thus indeed guarantee reserve production for all signals with 1-norm no greater than~$\gamma$. For a more detailed explanation of the market rules, see Section~2 in \cite{my_v2g_lp}.

Thanks to the 1-norm, the uncertainty set exhibits a symmetry property.

\begin{Prop}\label{prop:sym}
The uncertainty set $\Xi$ is symmetric such that $\pm\xi \in \Xi \iff \pm\vert \xi \vert \in \Xi$. 
\end{Prop}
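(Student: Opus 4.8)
The plan is to reduce the claimed equivalences to three elementary observations about the four functions $\xi$, $-\xi$, $\lvert\xi\rvert$, and $-\lvert\xi\rvert$, showing that either all of them belong to $\Xi$ or none does; the chain $\xi \in \Xi \iff -\xi \in \Xi \iff \lvert\xi\rvert \in \Xi \iff -\lvert\xi\rvert \in \Xi$ then contains the stated $\pm\xi \in \Xi \iff \pm\lvert\xi\rvert \in \Xi$ as a special case. First I would fix $\xi \in \set{R}(\T,[-1,1])$ and check the range condition: from $\xi(t) \in [-1,1]$ one immediately gets $-\xi(t) \in [-1,1]$, $\lvert\xi(t)\rvert \in [0,1] \subseteq [-1,1]$, and $-\lvert\xi(t)\rvert \in [-1,0] \subseteq [-1,1]$, so all four functions map $\T$ into $[-1,1]$.

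Next I would address membership in the Riemann-integrable class. Negation clearly preserves Riemann integrability, so it suffices to argue that $\lvert\xi\rvert$ is Riemann integrable whenever $\xi$ is; this follows from the standard fact that the composition $g \circ \xi$ of a Riemann integrable function $\xi$ with a continuous function $g$ (here $g(u) = \lvert u\rvert$ on $[-1,1]$) is Riemann integrable. Hence $-\xi$, $\lvert\xi\rvert$, and $-\lvert\xi\rvert$ all lie in $\set{R}(\T,[-1,1])$.

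Finally, the budget constraint depends only on the pointwise absolute value, and $\lvert-\xi(t)\rvert = \lvert\xi(t)\rvert = \lvert\,\lvert\xi\rvert(t)\,\rvert = \lvert-\lvert\xi\rvert(t)\rvert$ for every $t \in \T$, so the four functions have identical $1$-norm $\int_{\T} \lvert\xi(t)\rvert\,\mathrm{d}t$; thus the inequality $\int_{\T} \lvert\xi(t)\rvert\,\mathrm{d}t \le \gamma$ either holds for all of them or for none. Combining the three observations yields the equivalence of the four membership statements, and in particular the proposition. I expect the only nonroutine step to be the Riemann integrability of $\lvert\xi\rvert$, which is why I would invoke the composition-with-a-continuous-function theorem explicitly; alternatively, one can use the Lebesgue criterion, noting that $\lvert\xi\rvert$ is continuous at every point where $\xi$ is and is bounded, hence continuous almost everywhere and therefore Riemann integrable.
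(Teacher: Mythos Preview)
Your proposal is correct and follows essentially the same approach as the paper: verify separately that the range condition, the Riemann-integrability condition, and the budget constraint each hold for one of the four functions $\xi,-\xi,\lvert\xi\rvert,-\lvert\xi\rvert$ if and only if they hold for all. The paper's proof is terser and simply asserts $\xi \in \set{R}(\T,[-1,1]) \iff \lvert\xi\rvert \in \set{R}(\T,[-1,1]) \iff -\lvert\xi\rvert \in \set{R}(\T,[-1,1])$ together with equality of the three relevant integrals; your version is more explicit in justifying Riemann integrability of $\lvert\xi\rvert$ via composition with a continuous function, which is a welcome addition.
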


The symmetry proposition suggests that it may be helpful to reason about nonnegative regulation signals only. For later use, we thus define $\Xi^+ = \Xi \cap \set{R}(\set{T}, [0,1])$. 

\subsection{Robust Storage Optimization Model and Challenges of Discretization}
The expected cost of providing arbitrage and regulation over the planning horizon is captured by a generic function $c(x^0, x^\uparrow, x^\downarrow)$ and the expected impact on future costs is captured by a generic cost-to-go function $\phi(x^0, x^\uparrow, x^\downarrow, y_0)$.  Putting everything together, the storage operator solves the following optimization problem to find physically feasible market decisions that minimize costs 
\begin{mini!}|s|[2]<b>            
    {}
    {c\left( x^0, x^\uparrow, x^\downarrow \right)  + \phi\left( x^0, x^\uparrow, x^\downarrow, y_0 \right)\label{R:obj}}
    {\label{pb:R}}
    {}
    \addConstraint{
    x^0 \in \set{F}(\set{T}, \mathbb{R}),~x^\uparrow, x^\downarrow \in \set{F}(\set{T}, \mathbb{R}_+)
    }{}{
    \label{R:vars}
    }
    \addConstraint{
    x(x^0(t), x^\uparrow(t), x^\downarrow(t), \xi(t)) \geq \ubar x,
    }{}{
    \quad \forall t \in \T,~\forall \xi \in \Xi
    \label{R:x_lower}
    }
    \addConstraint{
    x(x^0(t), x^\uparrow(t), x^\downarrow(t), \xi(t)) 
    }{
    \leq \bar x,
    }{
    \quad \forall t \in \T,~\forall \xi \in \Xi
    \label{R:x_upper}
    }
    \addConstraint{
    y(x^0, x^\uparrow, x^\downarrow, \xi, y_0, t) 
    }{
    \geq \ubar y,
    }{
    \quad \forall t \in \T,~\forall \xi \in \Xi
    \label{R:y_lower}
    }
    \addConstraint{
    y(x^0, x^\uparrow, x^\downarrow, \xi, y_0, t) 
    }{
    \leq \bar y,
    }{
    \quad \forall t \in \T,~\forall \xi \in \Xi.\label{R:y_upper}
    }
\end{mini!}
Constraints~\eqref{R:x_lower}--\eqref{R:y_upper} require that the power output $x$ and SOC $y$ stay within their respective limits for any regulation signal $\xi$ in the uncertainty set $\Xi$, during the time horizon $\T$.

\begin{Rmk}[Coupling between market bids]
    Market rules may couple bids for up- and downregulation. For example, the European FCR market requires them to be equal. We consider such coupling in the numerical case study in Section~\ref{sec:app}, but omit it from the problem formulation because it is independent of the regulation signal and does not impact the robust reformulations. \hfill $\Box$ 
\end{Rmk}

Compared to standard robust optimization~\citep{bental2002ro}, problem \eqref{pb:R} exhibits the following challenges.
\begin{enumerate}
    \item \textbf{Infinite dimensionality:} Constraints~~\eqref{R:x_lower}--\eqref{R:y_upper} differ from conventional robust constraints in two ways regarding infinite dimensionality. First, as an example, constraint~\eqref{R:y_upper} is equivalent to $\max_{\xi\in\Xi} y(x^0, \xup, \xdn, \xi, y_0, t)\le \bar y$ for all $t\in\T$, which is a set of infinitely many robust constraints indexed by the continuous time~$t$. This should be distinguished from the well-known equivalence of \emph{one} robust constraint to a set of infinitely many \emph{deterministic} constraints. Second, considering again constraint~\eqref{R:y_upper}, it can also be reformulated as $\max_{t\in \T} y(x^0, \xup, \xdn, \xi, y_0, t)\le \bar y$ for all $\xi\in\Xi$, which is \emph{one} robust constraint. However, its uncertainty $\xi$ is a function of continuous time, not a finite dimensional vector. Thus, both reformulations involve infinite dimensionality, either in the number of robust constraints or in the space of uncertainty, that differ from the usual setting of robust optimization. 
    \item \textbf{Nonconvexity:} By the structural properties of the SOC function~$y$ uncovered by Proposition~\ref{prop:y},
    the robust constraint~\eqref{R:y_lower} for the lower bound on the SOC requires minimizing~$y$ over~$\xi$, which is a concave minimization problem if~$x^0 \leq 0$~\citep[Proposition~1]{my_v2g_lp};
    the upper bound~\eqref{R:y_upper} on the SOC is nonconvex as~$y$ is concave in the market decisions.
\end{enumerate}
An intuitive approach to circumvent the dimensionality challenge is to discretize time by imposing that the regulation signal be constant over the discretization intervals and that the robust constraints need only hold at the end points of the discretization intervals. Such an approach removes infinite dimensionality in both the number of robust constraints and the $\xi$~space. This approach can be exact if the deviation budget~$\gamma$ fulfills the following technical assumption.
\begin{Ass}\label{ass:div}
    The deviation budget $\gamma$ is a positive multiple of the length of a trading interval~$\Delta t$.
\end{Ass}
\begin{Rmk} In general, the length of a trading interval $\Delta t$ is unrelated to the deviation budget $\gamma$ for admissible regulation signals. However, Assumption~\ref{ass:div} hardly restricts the generality of our model. If the assumption is invalid but both $\gamma$ and $\Delta t$ are rational, then it can be enforced by reducing $\Delta t$ to the greatest common divisor of $\gamma$ and the original $\Delta t$, and by adding linear constraints that couple the market decisions over the original trading intervals. \hfill $\Box$
\end{Rmk}

Under Assumption~\ref{ass:div}, the time discretization is exact if storage operators cannot sell power for arbitrage, \ie, if~$x^0 \leq 0$~\citep[Theorem~1]{my_v2g_lp}; or if the SOC is affine in the regulation signal, which is the case if $\eta^\mathrm{c} = \eta^\mathrm{d} = 1$ or if charging and discharging rates are modeled as separate affine functions of the regulation signal~\citep[Proposition~7]{my_v2g_lp}. In general, however, time discretization relaxes the robust constraints~\eqref{R:y_lower}--\eqref{R:y_upper} because it underestimates SOC fluctuations,
as the following example shows, 
which may lead to infeasible decisions.

\begin{Ex}[Risks of time discretization]\label{ex:time_discretization}
    Consider a toy problem with parameters $y_0 = 0$, $\eta^\mathrm{c} = \eta^\mathrm{d} = 0.85$, $\gamma = \Delta t = 1$h, $T = 2\Delta t$, and market bids $x^0(t) = 1.0$kW for $0 \leq t < \Delta t$, $0.5$kW for $\Delta t \leq t \leq T$, and $x^\downarrow(t) = 2.5$kW for $0 \leq t < \Delta t$, $3.5$kW for $\Delta t \leq t \leq T$. We compute the maximum SOC and the corresponding regulation signal at times $t = \Delta t, 1.1 \Delta t, \ldots, 2 \Delta t$ by solving the problem%
    \begin{subequations}
    \begin{align}
        & \max_{\xi \in \Xi} \, y(x^0, \xup, \xdn, \xi, y_0, t) \\
        = & y_0 +
        \max_{\xi \in \Xi^+} \int_0^t \min\left\{ -\eta^\mathrm{c} \left( x^0(\tau) - \xi(\tau) \xdn(\tau) \right), \,
        - \frac{1}{\eta^\mathrm{d}} \left( x^0(\tau) - \xi(\tau) \xdn(\tau) \right)
        \right\} \, \mathrm{d}\tau \\
        = & y_0 + \max_{\bm \xi \in [0,1]^k} \, \sum_{l = 1}^{k} \sigma_l(t) \min\left\{ \eta^\mathrm{c}\left(\xi_l \xdn_l - x^0_l\right), \, \frac{1}{\eta^\mathrm{d}}\left(\xi_l \xdn_l - x^0_l\right) \right\}
        ~ \text{s.t.} ~
        \sum_{l = 1}^{k} \sigma_l(t) \xi_l \leq \gamma,
    \end{align}
    \end{subequations}
    where $k$ is such that $t \in \T_k$, and $\sigma_l(t)$ is set to $\Delta t$ for $l < k$, and $\sigma_k(t) = t - (k - 1)\Delta t$. The first equality follows from equation~\eqref{eq:ymin}, monotonicity in~$\xi$, and the symmetry of~$\Xi$. The second equality exploits the concavity of the integrand, the structure of the uncertainty set, and the market decisions being piecewise constant. We recover a finite-dimensional, convex, piecewise-linear optimization problem, which can be reformulated as a linear program by introducing $k$ hypographical variables.
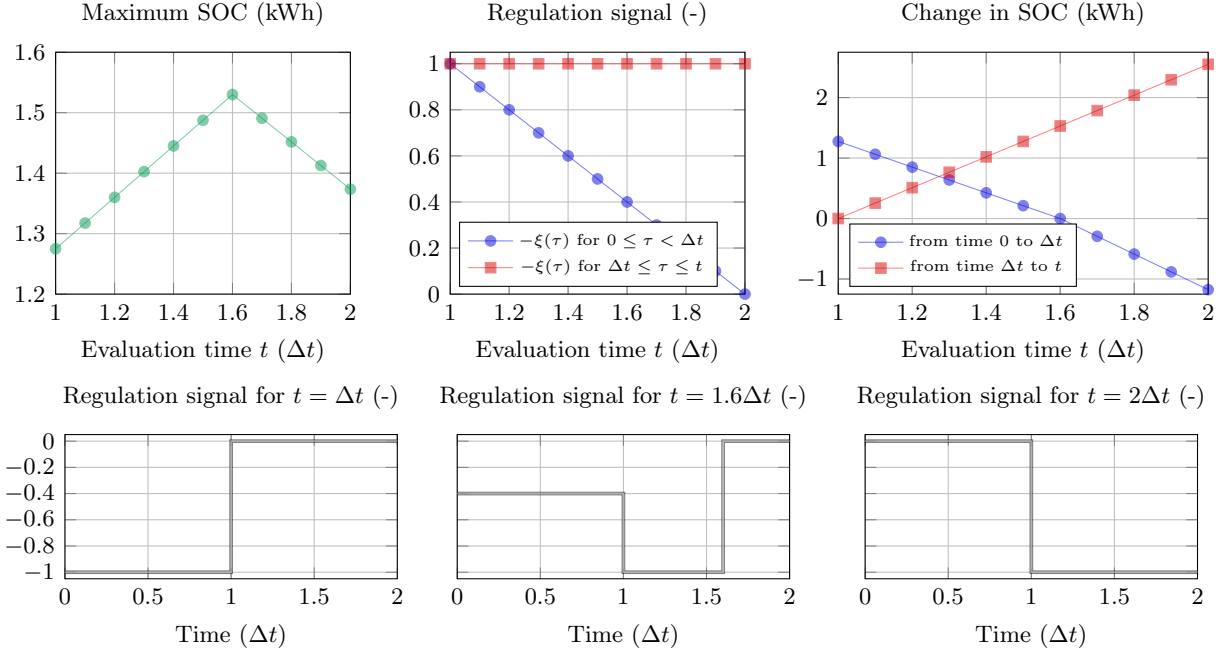
\begin{figure}[t]
\centering
\begin{subfigure}{0.31\textwidth}
\centering
\begin{tikzpicture}[]
    \begin{axis}[
    font=\footnotesize,
    width=5.5cm,
    height=4.8cm,
    enlarge x limits=false,
    ymin = 1.2, ymax = 1.6,
    ytick distance = .1,
    xlabel={Evaluation time $t$ ($\Delta t$)},
    title={Maximum SOC (kWh)},
    grid=both, 
    legend style={
    at={(0.03,0.97)},
    anchor=north west,
    },
    ]
    \addplot+[opacity=0.5, ForestGreen,  
    mark options={fill=ForestGreen,draw=ForestGreen,
    }, ] table [x=t, y=soc, col sep=space] {data/ex_ub.txt};
    \end{axis}
\end{tikzpicture}
\end{subfigure}
\begin{subfigure}{0.31\textwidth}
\centering
\begin{tikzpicture}[]
    \begin{axis}[
    font=\footnotesize,
    width=5.5cm,
    height=4.8cm,
      enlarge x limits=false,
      ymin = 0, ymax = 1.05,
      ytick distance = .2,
      xlabel={Evaluation time $t$ ($\Delta t$)},
      title={Regulation signal (-)},
      grid=both, 
      legend style={
    at={(0.03,0.03),
    }, 
    anchor=south west,
    font=\tiny,
  },
    ]      
    \addplot+[opacity=0.5] table [x=t, y=ξ1, col sep=tab] {data/ex_ub.txt};
    \addplot+[opacity=0.5] table [x=t, y=ξ2, col sep=tab] {data/ex_ub.txt};
    \legend{$-\xi(\tau)$ for $0 \leq \tau < \Delta t$,
    $-\xi(\tau)$ for $\Delta t \leq \tau \leq t$
    }
    \end{axis}
\end{tikzpicture}
\end{subfigure}
\begin{subfigure}{0.36\textwidth}
\centering
\begin{tikzpicture}
    \begin{axis}[
    font=\footnotesize,
    width=6.5cm,
    height=4.8cm,
      enlarge x limits=false,
      ymin = -1.25, ymax = 2.75,
      ytick distance = 1,
      xlabel={Evaluation time $t$ ($\Delta t$)},
      title={Change in SOC (kWh)},
      grid=both,
      legend style={
    at={(0.03,0.03)},
    anchor=south west,
    font=\tiny,
  },
    ]      
    \addplot+[opacity=0.5] table [x=t, y=Δsoc1, col sep=tab] {data/ex_ub.txt};
    \addplot+[opacity=0.5] table [x=t, y=Δsoc2, col sep=tab] {data/ex_ub.txt};
    \legend{from time $0$ to $\Delta t$, from time $\Delta t$ to $t$}
    \end{axis}
\end{tikzpicture}
\end{subfigure}
\begin{subfigure}{0.34\textwidth}
\centering
\begin{tikzpicture}[]
    \begin{axis}[
    font=\footnotesize,
      width=6cm,
      height=3.5cm,
      enlarge x limits=false,
      ymin = -1.05, ymax = 0.05,
      ytick distance = 0.2,
      xlabel={Time ($\Delta t$)},
      title={Regulation signal (-) for $t = \Delta t$},
      grid=both,
      legend style={
    at={(0.03,0.03)},
    anchor=south west,
  },
    ]      
    \addplot[opacity=0.5, double, black] table [x=t1, y=xi1, col sep=tab] {data/ex_ub_xi.txt};
    \end{axis}
\end{tikzpicture}
\end{subfigure}
\begin{subfigure}{0.32\textwidth}
\centering
\begin{tikzpicture}[]
    \begin{axis}[
    font=\footnotesize,
      width=6cm,
      height=3.5cm,
      enlarge x limits=false,
      ymin = -1.05, ymax = 0.05,
      ytick distance = 0.2,
      yticklabel=\empty,
      xlabel={Time ($\Delta t$)},
      title={Regulation signal (-) for $t = 1.6\Delta t$},
      grid=both,
      legend style={
    at={(0.03,0.03)},
    anchor=south west,
  },
    ]      
    \addplot[opacity=0.5, double, black] table [x=t2, y=xi2, col sep=tab] {data/ex_ub_xi.txt};
    \end{axis}
\end{tikzpicture}
\end{subfigure}
\begin{subfigure}{0.32\textwidth}
\centering
\begin{tikzpicture}[]
    \begin{axis}[
    font=\footnotesize,
      width=6cm,
      height=3.5cm,
      enlarge x limits=false,
      ymin = -1.05, ymax = 0.05,
      ytick distance = 0.2,
      yticklabel=\empty,
      xlabel={Time ($\Delta t$)},
      title={Regulation signal (-) for $t = 2\Delta t$},
      grid=both,
      legend style={
    at={(0.03,0.03)},
    anchor=south west,
  },
    ]      
    \addplot[opacity=0.5, double, black] table [x=t3, y=xi3, col sep=tab] {data/ex_ub_xi.txt};
    \end{axis}
\end{tikzpicture}
\end{subfigure}
\caption{Risks of time discretization.}
\label{fig:soc_overshoot}
\end{figure}
    We observe that the maximum SOC peaks at $t = 1.6\Delta t$, \ie, in the interior of a trading interval, see Figure~\ref{fig:soc_overshoot}. For any fixed~$t$, a worst-case regulation signal is given by $\xi(\tau) = -\frac{T - \Delta t}{\Delta t}$ for $0 \leq \tau < \Delta t$, $-1$ for $\Delta t \leq \tau < t$, and $0$ for $t \leq \tau \leq T$, which is \emph{non}-constant over the interval~$[\Delta t, T]$. 
\end{Ex}

Despite these challenges, it turns out that we can derive an \emph{exact finite-dimensional} reformulation of problem~\eqref{pb:R} and that the feasibility of candidate solutions can be checked by solving a \emph{linear program} of small dimensionality.
\section{Finite-Dimensional Reformulation}\label{sec:reformulation}

To handle the continuous-time constraints, we work with piecewise constant functions as the market decisions $x^0$, $x^\uparrow$, $x^\downarrow$ are piecewise constant on the trading intervals $\mathcal{T}_k$, $k \in \K = \{1,\ldots,K\}$. Similar to \cite{my_v2g_lp}, we introduce a lifting operator $L_t:\mathbb{R}^K \to \set{R}(\T,\mathbb{R})$ and its adjoint $L^\dagger_t: \set{R}(\T,\mathbb{R}) \to \mathbb{R}^K$ scaled by a vector of time constants $\sigma_l(t) = \Delta t$ if $l < k$, $= t - (k - 1)\Delta t$ if $l = k$, $= 0$ otherwise, parameterized by a specific time $t \in \T_k$ for some $k \in \K$. Applying $L_t L^\dagger_t$ to a function $w \in \set{R}(\T, \mathbb{R})$ sets the function to zero on~$[t, T]$ and averages it over~$\T_1, \ldots, \T_{k-1}$, and the partial interval $[(k-1)\Delta t, t)$. Unlike in \cite{my_v2g_lp}, the averaging is limited to a fraction of~$\T_k$, not the entire interval. Conversely, applying $L^\dagger_t L_t$ to a vector $\bm v \in \mathbb{R}^K$ preserves the first $k$ elements and sets the rest to zero, see Figure~\ref{fig:LL}.  A formal definition of these operators is provided in Appendix~\ref{apx:L}.

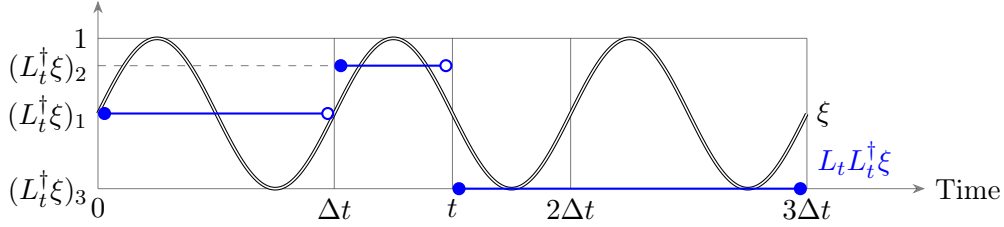
\begin{figure}[!ht]
    \centering
    \begin{tikzpicture}
     \draw[-Stealth, opacity = 0.5] (0,0) node[below, text opacity = 1] {$0$} -- (3.5*pi,0) node[right, text opacity = 1] {Time};
     \draw[-Stealth, opacity = 0.5] (0,0) -- (0,2.5);
     \draw[-, opacity = 0.5] (0,2) node[left, text opacity = 1] {$1$} -- (3*pi,2);
     \draw[-, opacity = 0.5] (pi,0) node[below, text opacity = 1] {$\Delta t$} -- (pi,2);
     \draw[-, opacity = 0.5] (2*pi,0) node[below, text opacity = 1] {$2\Delta t$} -- (2*pi,2);
    \draw[-, opacity = 0.5] (3*pi,0) node[below, text opacity = 1] {$3\Delta t$} -- (3*pi,2);
    \draw[-, opacity = 0.5] (1.5*pi,0) node[below, text opacity = 1] {$t$} -- (1.5*pi,2);
    \draw[domain=0:3*pi, double, smooth,variable=\x,black, samples = 200, opacity = 1] plot ({\x},{1+sin(2*\x r)})  node[right, text opacity = 1] {$\xi$};
    \draw[{Circle[]}-{Circle[open]},blue, thick] (0, 1) node[left, black] {$(L^\dagger_t \xi)_1$} -- (pi,1);
    \draw[{Circle[]}-{Circle[open]},blue, thick] (pi, 1 + 2/pi) -- (1.5*pi,1 + 2/pi);
    \draw[{Circle[]}-{Circle[]},blue, thick] (1.5*pi, 0) -- (3*pi,0) node[above right] {$L_t L^\dagger_t \xi$};
    \draw[dashed, opacity = 0.5] (0, 1 + 2/pi) node[left, text opacity = 1] {$(L^\dagger_t \xi)_2$} -- (pi, 1 + 2/pi);
    \node at (0,0) [left] {$(L^\dagger_t \xi)_{3}$};
    \end{tikzpicture}
    \caption{Applying the lifting and adjoint operators to a regulation signal.}
    \label{fig:LL}
\end{figure}

These operators will be useful for transforming arbitrary regulation signals into signals that are piecewise constant over all trading intervals except the~$k$-th, on which they are constant from the start of the interval up to time~$t$ and vanish thereafter. The construction ensures that we only consider regulation signals that spend their deviation budget before time~$t$. 
To ease notation, we set $\bm x^0 = L^\dagger_T x^0$, $\bm x^\uparrow = L^\dagger_T x^\uparrow$, and $\bm x^\downarrow = L^\dagger_T x^\downarrow$. Building on these operators, we introduce a discretized uncertainty set.

\begin{Prop}\label{prop:Xi_k}
    For any $t \in \T_k$ and any $k \in \K$, we have $L_t L^\dagger_t \Xi^+ \subseteq \Xi^+ $ and
    \begin{equation}\label{eq:Ldagger}
        L^\dagger_t \Xi^+ = \left\{ \bm \xi \in [0, 1]^K: \sum_{l = 1}^{k} \sigma_l(t) \xi_l \leq \gamma, \, \xi_l = 0 ~ \forall l \in \{k+1,\ldots,K\} \right\}.
    \end{equation}
\end{Prop}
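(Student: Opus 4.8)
The plan is to reduce everything to two elementary facts about the operators $L_t$ and $L^\dagger_t$, after which both claims become short computations. First I would make the definitions from Appendix~\ref{apx:L} explicit: for $t \in \T_k$, the scaled adjoint acts coordinatewise as an average over the relevant (possibly partial) trading interval,
\begin{equation*}
    (L^\dagger_t w)_l = \frac{1}{\sigma_l(t)} \int_{\T_l \cap [0,t)} w(\tau)\,\mathrm{d}\tau \quad \text{for } l \le k, \qquad (L^\dagger_t w)_l = 0 \quad \text{for } l > k,
\end{equation*}
with the convention of Appendix~\ref{apx:L} in the degenerate case $\sigma_k(t) = 0$; and $L_t \bm v$ is the piecewise constant function equal to $v_l$ on $\T_l$ for $l < k$, to $v_k$ on $[(k-1)\Delta t, t)$, and to $0$ on $[t, T]$. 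Since $t \ge (k-1)\Delta t$, the half-open intervals satisfy $\T_l \cap [0,t) = \T_l$ for $l < k$ and $\T_k \cap [0,t) = [(k-1)\Delta t, t)$. The two facts I would record are: (i) for any $w \in \set{R}(\T,\R)$, integrating the piecewise constant function $L_t L^\dagger_t w$ over each interval undoes the averaging, so
\begin{equation*}
    \int_\T (L_t L^\dagger_t w)(\tau)\,\mathrm{d}\tau = \sum_{l=1}^k \sigma_l(t)\,(L^\dagger_t w)_l = \int_0^t w(\tau)\,\mathrm{d}\tau ;
\end{equation*}
and (ii) $L^\dagger_t L_t \bm v = (v_1, \ldots, v_k, 0, \ldots, 0)$ for every $\bm v \in \R^K$, which is immediate from the two explicit forms (this is the truncation property described after Figure~\ref{fig:LL}).

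With these in hand, for the inclusion $L_t L^\dagger_t \Xi^+ \subseteq \Xi^+$ I would take $\xi \in \Xi^+$ and note that $L_t L^\dagger_t \xi$ is piecewise constant, hence Riemann integrable, and on each interval equals either an average of the $[0,1]$-valued function $\xi$ or $0$, so it maps into $[0,1]$; thus $L_t L^\dagger_t \xi \in \set{R}(\T, [0,1])$. Fact~(i) and $\xi \ge 0$ then give $\int_\T L_t L^\dagger_t \xi = \int_0^t \xi \le \int_0^T \xi \le \gamma$, so $L_t L^\dagger_t \xi \in \Xi^+$.

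For the identity~\eqref{eq:Ldagger} I would argue the two inclusions separately. For ``$\subseteq$'', given $\xi \in \Xi^+$, the explicit form of $L^\dagger_t$ shows $(L^\dagger_t \xi)_l \in [0,1]$ for $l \le k$ (an average of a $[0,1]$-valued function) and $(L^\dagger_t \xi)_l = 0$ for $l > k$, while fact~(i) gives $\sum_{l=1}^k \sigma_l(t)(L^\dagger_t \xi)_l = \int_0^t \xi \le \gamma$; hence $L^\dagger_t \xi$ lies in the right-hand set. For ``$\supseteq$'', given $\bm\xi$ in the right-hand set, the natural candidate preimage is $\xi := L_t \bm\xi$, which lies in $\set{R}(\T,[0,1])$ since its nonzero values are the $\xi_l \in [0,1]$; because $\xi_l = 0$ for $l > k$, fact~(ii) yields $L^\dagger_t \xi = L^\dagger_t L_t \bm\xi = \bm\xi$, so it only remains to check $\xi \in \Xi^+$, which holds because $\int_\T \xi = \sum_{l=1}^k \sigma_l(t)\xi_l \le \gamma$. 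This gives $\bm\xi = L^\dagger_t \xi \in L^\dagger_t \Xi^+$ and completes the equality.

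The only delicate point is the bookkeeping around the partial $k$-th interval $[(k-1)\Delta t, t)$: one must use the scaling $\sigma_k(t) = t - (k-1)\Delta t$ (not $\Delta t$) in fact~(i), handle the degenerate case $\sigma_k(t) = 0$ via the appendix's convention, and keep the half-open interval conventions straight so that no trading interval is double-counted in $\int_0^t w$. Beyond that the proof is a direct computation, and the structural statements ``$L_t L^\dagger_t$ averages and truncates at $t$'' and ``$L^\dagger_t L_t$ truncates at index $k$'' are exactly facts~(i)--(ii).
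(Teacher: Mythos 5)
Your proposal is correct and follows essentially the same route as the paper's proof: verify that the averaged values of a $[0,1]$-valued signal stay in $[0,1]$, use the identity $\int_\T L_t L^\dagger_t \xi = \int_0^t \xi \le \gamma$ for the budget, and establish the reverse inclusion of~\eqref{eq:Ldagger} by lifting a candidate vector via $L_t$ and applying $L^\dagger_t L_t \bm\xi = \bm\xi$, which holds because the components beyond index $k$ vanish. The only difference is organizational (your explicit facts (i)--(ii) versus the paper's inline inf/sup and integral computations), so no further comparison is needed.
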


We are now ready to provide finite-dimensional reformulations of the robust constraints~\eqref{R:x_lower}--\eqref{R:y_upper}, beginning with the  bounds on power output.

\begin{Prop}[Bounds on power output]\label{prop:charger}
    The following assertions hold.
    \begin{subequations}\label{reformulation:x}
    \begin{align}
        & x(x^0(t), x^\uparrow(t), x^\downarrow(t), \xi(t)) \leq \bar x,
        ~~\forall \xi \in \Xi,~\forall t \in \T \\
        \iff \quad &
        x^0_k + x^\uparrow_k \leq \bar x
        ~~\forall k \in \K,~\mathrm{ and } \label{reformulation:xlower}\\
        & x(x^0(t), x^\uparrow(t), x^\downarrow(t), \xi(t)) \geq \ubar x,
        ~~\forall \xi \in \Xi,~\forall t \in \T \\
        \iff \quad &
        x^0_k - x^\downarrow_k \geq \ubar x,
        ~~\forall k \in \K.
        \label{reformulation:xupper}
    \end{align}
    \end{subequations}
\end{Prop}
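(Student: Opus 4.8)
The plan is to reduce the two continuous-time robust constraints to a single scalar problem per trading interval and then solve that scalar problem explicitly. First I would observe that for any $t \in \T_k$ the market decisions satisfy $x^0(t) = x^0_k$, $x^\uparrow(t) = x^\uparrow_k$, $x^\downarrow(t) = x^\downarrow_k$ (this is what $\bm x^0 = L^\dagger_T x^0$, $\bm x^\uparrow = L^\dagger_T x^\uparrow$, $\bm x^\downarrow = L^\dagger_T x^\downarrow$ record), so by \eqref{eq:x} the quantity $x(x^0(t),x^\uparrow(t),x^\downarrow(t),\xi(t))$ depends on $\xi$ only through the scalar $\xi(t) \in [-1,1]$. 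Moreover, for any fixed $t$ the attainable set $\{\xi(t):\xi\in\Xi\}$ equals all of $[-1,1]$: given $v \in [-1,1]$, the signal equal to $v$ at $t$ and $0$ elsewhere is valued in $[-1,1]$, Riemann integrable, and has $1$-norm $0 \le \gamma$, hence lies in $\Xi$. Since each $\T_k$ is nonempty and $\bigcup_k \T_k = \T$, the continuum of times collapses to the $K$ intervals, and constraint \eqref{R:x_upper} becomes $\max_{v \in [-1,1]}\left( x^0_k + [v]^+ x^\uparrow_k - [v]^- x^\downarrow_k \right) \le \bar x$ for every $k \in \K$, while \eqref{R:x_lower} becomes $\min_{v \in [-1,1]}\left( x^0_k + [v]^+ x^\uparrow_k - [v]^- x^\downarrow_k \right) \ge \ubar x$ for every $k \in \K$.

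The remaining step is the scalar optimization of $v \mapsto x^0_k + [v]^+ x^\uparrow_k - [v]^- x^\downarrow_k$ over $v \in [-1,1]$. Using the sign constraints $x^\uparrow_k, x^\downarrow_k \ge 0$ from \eqref{R:vars}, the bounds $[v]^+, [v]^- \in [0,1]$, and complementarity $[v]^+[v]^- = 0$, the term $[v]^+ x^\uparrow_k$ is nonnegative and at most $x^\uparrow_k$, while $-[v]^- x^\downarrow_k$ is nonpositive and at least $-x^\downarrow_k$. Hence the maximum equals $x^0_k + x^\uparrow_k$, attained at $v = 1$, and the minimum equals $x^0_k - x^\downarrow_k$, attained at $v = -1$; this is exactly the monotonicity of $x$ in $\xi(t)$ recorded in Proposition~\ref{prop:x}, read off at the endpoints of $[-1,1]$. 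Substituting these extrema into the pointwise constraints above yields the finite systems \eqref{reformulation:xlower} and \eqref{reformulation:xupper}.

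For completeness I would also spell out the two implications directly. For ``$\Leftarrow$'': assuming $x^0_k + x^\uparrow_k \le \bar x$ for all $k$, any $\xi \in \Xi$ and any $t \in \T_k$ give $x(x^0(t),x^\uparrow(t),x^\downarrow(t),\xi(t)) = x^0_k + [\xi(t)]^+ x^\uparrow_k - [\xi(t)]^- x^\downarrow_k \le x^0_k + x^\uparrow_k \le \bar x$, and the lower bound is symmetric. For ``$\Rightarrow$'': instantiate the robust constraint at an arbitrary $t \in \T_k$ with the admissible signal equal to $1$ (respectively $-1$) at $t$ and $0$ elsewhere to obtain $x^0_k + x^\uparrow_k \le \bar x$ (respectively $x^0_k - x^\downarrow_k \ge \ubar x$); since $k$ and $t$ were arbitrary this gives the full system.

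There is no serious difficulty here; the one point that merits a line of justification is the claim $\{\xi(t):\xi\in\Xi\}=[-1,1]$, i.e., that a signal attaining an extreme value at a single instant and vanishing elsewhere is admissible — immediate because Riemann integrability and the value of the integral are insensitive to a single-point modification, so no deviation budget is consumed, whatever $\gamma \ge 0$. If one instead prefers to restrict attention to signals that are piecewise constant on the $\T_k$, the same worst cases follow from Proposition~\ref{prop:Xi_k} together with the symmetry of $\Xi$ (Proposition~\ref{prop:sym}) and the monotonicity of $x$ in $\xi(t)$, at the cost of requiring $\gamma > 0$.
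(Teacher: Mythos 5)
Your argument is correct and follows essentially the same route as the paper's proof: exploit that $x$ depends on $\xi$ only through the scalar $\xi(t)$, use monotonicity in $\xi(t)$ to place the worst case at $\xi(t)=\pm 1$ (admissible at no budget cost), and use piecewise constancy of the market decisions to collapse the time index to the $K$ intervals. Your explicit justification that the extreme values are attainable via a single-instant signal of zero $1$-norm is a nice touch the paper leaves implicit, but it is the same proof in substance.
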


Proposition~\ref{prop:charger} holds because the power output function only depends on the value of $\xi$ at a specific time $t \in \T$ and not on the entire trajectory of $\xi$ up to time $t$. As the power output function is monotone in $\xi$, the robust constraints hold if and only if they hold for $\xi(t) = -1$ and $\xi(t) = 1$, which results in $2K$ linear constraints.

For the lower bound on the SOC, we follow Proposition~8 in \cite{my_v2g_lp}.
\begin{Prop}[Lower bound on SOC]\label{prop:soc_lb}
    The constraint
         $y(x^0, x^\uparrow, x^\downarrow, \xi, y_0, t) \geq \ubar y$
         holds for all $(t,\xi) \in \T \times \Xi$
    if and only if there exist $\bm \alpha, \bm \beta \in \mathbb{R}^K$, $\ubar{\bm{\lambda}} \in \mathbb{R}^K_+$, $\ubar{\bm \Lambda}_k \in \mathbb{R}^k_+$ for all $k \in \K$ such that
    \begin{subequations}\label{reformulation:soc_lb}
    \begin{align}
        & y_0 - \gamma \ubar \lambda_k - \Delta t \sum_{l \leq k} \alpha_l + \ubar \Lambda_{kl} \geq \ubar y,
        && \forall k \in \K, \label{reformulation:soc_lb:soc}\\
        & \alpha_{k} \geq \eta^\mathrm{c} x^0_{k},
        \, \alpha_{k} \geq \frac{x^0_{k}}{\eta^\mathrm{d}},
        \, \beta_{k} \geq \eta^\mathrm{c}\left(x^0_{k} + x^\uparrow_{k}\right),
        \, \beta_{k} \geq \frac{x^0_{k} + x^\uparrow_{k}}{\eta^\mathrm{d}},
        && \forall k \in \K, \label{reformulation:soc_lb:ldr}\\
        & \ubar \Lambda_{kl} + \ubar \lambda_{k} + \alpha_{l} - \beta_l \geq 0,
        && \forall k,l \in \K : l \leq k.
    \end{align}
    \end{subequations}
\end{Prop}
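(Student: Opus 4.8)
The plan is to strip away the functional uncertainty and the continuous time index one after the other, reduce the robust constraint to a finite family of fractional–knapsack linear programs, and then dualize.

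First, I would pass to nonnegative signals and a finite knapsack. By Proposition~\ref{prop:y} the SOC is nonincreasing in $\xi$, and by Proposition~\ref{prop:sym} $|\xi|\in\Xi^+$ whenever $\xi\in\Xi$; since $|\xi|\ge\xi$ pointwise, $y(x^0,x^\uparrow,x^\downarrow,|\xi|,y_0,t)\le y(x^0,x^\uparrow,x^\downarrow,\xi,y_0,t)$, so for every $t$ the infimum over $\Xi$ equals the infimum over $\Xi^+$. On $\Xi^+$ we have $x(x^0(\tau),x^\uparrow(\tau),x^\downarrow(\tau),\xi(\tau))=x^0(\tau)+\xi(\tau)x^\uparrow(\tau)$, so by \eqref{eq:ymin} the integrand of $y(\cdot,t)$ equals $g_\tau(\xi(\tau))$ with $g_\tau(u)=\min\{-\eta^\mathrm{c}(x^0(\tau)+ux^\uparrow(\tau)),\,-(x^0(\tau)+ux^\uparrow(\tau))/\eta^\mathrm{d}\}$, concave in $u$ as a minimum of two affine functions. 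Fixing $t\in\T_k$ and writing $s=t-(k-1)\Delta t$, I would note that $y(\cdot,\xi,\cdot,t)$ depends only on $\xi|_{[0,t]}$ and that erasing mass of $\xi$ on $[t,T]$ only relaxes the budget, so one may restrict to signals supported on $[0,t]$ with $\int_0^t\xi\le\gamma$; parametrizing such a signal by the masses $b_l=\int_{\T_l}\xi$ on each subinterval and using the secant inequality $g_\tau(u)\ge g_\tau(0)+u\,(g_\tau(1)-g_\tau(0))$ on $[0,1]$, the inner integral is, for fixed masses, minimized by a bang–bang signal and equals $\sum_{l\le k}\big(b_l g_l(1)+(\sigma_l(t)-b_l)g_l(0)\big)$, with $g_l(0)=-\alpha_l^{\star}$, $g_l(1)=-\beta_l^{\star}$, $\alpha_l^{\star}=\max\{\eta^\mathrm{c}x^0_l,x^0_l/\eta^\mathrm{d}\}$, $\beta_l^{\star}=\max\{\eta^\mathrm{c}(x^0_l+x^\uparrow_l),(x^0_l+x^\uparrow_l)/\eta^\mathrm{d}\}$. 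Minimizing further over $b$ in the polytope $\{0\le b_l\le\sigma_l(t),\ \sum_{l\le k}b_l\le\gamma\}$ (the rescaled discretized uncertainty set of Proposition~\ref{prop:Xi_k}) gives
\[
\min_{\xi\in\Xi}y(x^0,x^\uparrow,x^\downarrow,\xi,y_0,t)=y_0-\textstyle\sum_{l\le k}\sigma_l(t)\alpha_l^{\star}-v_k(s),\qquad v_k(s)=\max\Big\{\textstyle\sum_{l\le k}b_l(\beta_l^{\star}-\alpha_l^{\star})\Big\}
\]
(the maximum over the same polytope); every coefficient $\beta_l^{\star}-\alpha_l^{\star}$ is nonnegative because $u\mapsto\max\{\eta^\mathrm{c}u,u/\eta^\mathrm{d}\}$ is nondecreasing and $x^\uparrow_l\ge0$.

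Next, I would eliminate the time index. On $\T_k$ only $\sigma_k(t)=s$ varies, so the constraint reads $y_0-\Delta t\sum_{l<k}\alpha_l^{\star}-s\alpha_k^{\star}-v_k(s)\ge\ubar y$ for all $s\in[0,\Delta t]$, and this is where Assumption~\ref{ass:div} enters: when $\gamma$ is an integer multiple of $\Delta t$, a greedy optimum of the knapsack never leaves a subinterval strictly fractionally filled, so $v_k$ is \emph{affine} — not merely convex — in $s$ on $[0,\Delta t]$; hence the left-hand side is affine in $s$ and it suffices to impose it at $s\in\{0,\Delta t\}$. Since the value at $s=0$ for index $k$ equals the value at $s=\Delta t$ for index $k-1$ (both equal the worst-case SOC at $t=(k-1)\Delta t$), the continuum of constraints collapses to $y_0\ge\ubar y$ (feasibility of the initial SOC, a standing assumption) together with one constraint per $k\in\K$, namely $y_0-\Delta t\sum_{l\le k}\alpha_l^{\star}-v_k(\Delta t)\ge\ubar y$ with $v_k(\Delta t)=\max\{\sum_{l\le k}b_l(\beta_l^{\star}-\alpha_l^{\star}):0\le b_l\le\Delta t,\ \sum_{l\le k}b_l\le\gamma\}$. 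I would then take the LP dual of $v_k(\Delta t)$, namely $\min\{\gamma\ubar{\lambda}_k+\Delta t\sum_{l\le k}\ubar{\Lambda}_{kl}:\ \ubar{\lambda}_k+\ubar{\Lambda}_{kl}\ge\beta_l^{\star}-\alpha_l^{\star},\ \ubar{\lambda}_k\ge0,\ \ubar{\Lambda}_{kl}\ge0\}$, so that strong duality turns the $k$-th constraint into: there exist $\ubar{\lambda}_k\ge0$ and $\ubar{\Lambda}_k\ge0$ with $\ubar{\Lambda}_{kl}+\ubar{\lambda}_k+\alpha_l^{\star}-\beta_l^{\star}\ge0$ and $y_0-\gamma\ubar{\lambda}_k-\Delta t\sum_{l\le k}(\alpha_l^{\star}+\ubar{\Lambda}_{kl})\ge\ubar y$.

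Finally, I would replace $\alpha_l^{\star},\beta_l^{\star}$ by free variables $\alpha_l,\beta_l$ subject to \eqref{reformulation:soc_lb:ldr}, which are exactly the epigraph inequalities $\alpha_l\ge\alpha_l^{\star}$ and $\beta_l\ge\beta_l^{\star}$. For ``$\Rightarrow$'' one uses $\alpha_l=\alpha_l^{\star}$, $\beta_l=\beta_l^{\star}$ and the dual-optimal multipliers; for ``$\Leftarrow$'' one notes that the secant value $b_l g_l(1)+(\Delta t-b_l)g_l(0)$ is nondecreasing in $g_l(0)$ and $g_l(1)$, so lowering these from $-\alpha_l^{\star},-\beta_l^{\star}$ to $-\alpha_l,-\beta_l$ only decreases the worst-case SOC, whence the true worst-case SOC at $t=k\Delta t$ is at least $y_0-\Delta t\sum_{l\le k}\alpha_l-\max_b\sum_{l\le k}b_l(\beta_l-\alpha_l)$, and since $(\ubar{\lambda}_k,\ubar{\Lambda}_k)$ is dual-feasible for this $(\beta_l-\alpha_l)$-knapsack, weak duality gives $y_0-\gamma\ubar{\lambda}_k-\Delta t\sum_{l\le k}(\alpha_l+\ubar{\Lambda}_{kl})\ge\ubar y$, matching \eqref{reformulation:soc_lb}. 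The hard part will be eliminating the time index: Example~\ref{ex:time_discretization} shows that for the SOC \emph{upper} bound the worst case can occur strictly inside a trading interval, so endpoints need not suffice in general; the argument here must genuinely exploit the sign structure of the conversion losses — which forces every knapsack coefficient to be nonnegative — together with Assumption~\ref{ass:div} to upgrade ``$v_k$ convex in $s$'' to ``$v_k$ affine in $s$''. A secondary technical point is justifying that bang–bang signals attain the interval-wise infimum of the concave integrand and that the infima over the non-compact set $\Xi$ are attained.
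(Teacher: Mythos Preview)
Your proposal is correct and follows the same high-level strategy as the paper: restrict to $\Xi^+$ via symmetry and monotonicity, pass from functional signals to a finite knapsack over per-interval masses, reduce the continuous time index to the endpoints $t=k\Delta t$, and dualize. The execution differs in that you supply self-contained arguments where the paper defers to external results: the paper invokes Lemma~1 of the cited companion work twice (once to pass to bang--bang signals, once to relax back to $[0,1]$-valued signals before averaging to piecewise constant) and Proposition~8 of that work for the endpoint reduction via LP sensitivity, whereas you obtain the bang--bang reduction directly from the secant inequality for the concave integrand and obtain the endpoint reduction by a greedy analysis of the fractional knapsack showing $v_k$ is affine in $s$ on $[0,\Delta t]$ under Assumption~\ref{ass:div}. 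Your route is more elementary and makes the role of Assumption~\ref{ass:div} explicit; the paper's is terser but outsources the substance. One small sharpening of your closing diagnosis: what actually separates the lower-SOC bound from the upper-SOC bound is not the sign of the knapsack coefficients (your greedy affinity argument goes through regardless of signs, since items with negative coefficient are simply never filled), but the direction in which concavity is used --- for the minimum the secant inequality \emph{linearizes} the integrand and yields a linear knapsack, whereas for the maximum Jensen leaves a genuinely concave objective over the polytope, and Example~\ref{ex:time_discretization} shows the endpoint reduction can then fail.
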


The claim holds thanks to a total unimodularity property of the nonnegative uncertainty set~$\Xi^+$ that arises from the bound~$\gamma$ on the 1-norm, which is a multiple of $\Delta t$ thanks to Assumption~\ref{ass:div}. The robust lower bound on the SOC thus results in $\frac{K(K+1)}{2}+5K$ linear constraints.

Our methodological contribution is the reformulation of the upper bound on the SOC.

\begin{Prop}[Upper bound on SOC]\label{prop:soc_ub}
    The constraint
     $y(x^0, x^\uparrow, x^\downarrow, \xi, y_0, t) \leq \bar y$ holds for all $(t,\xi) \in \T \times \Xi$
    if and only if $\bar{\bm{\lambda}} \in [0, (\bar x - \ubar x)/\eta^\mathrm{d}]^K$, $\bm \upsilon \in \{0,1\}^{2(K-1)}$, $\bar{\bm \Lambda}_k \in \mathbb{R}^k$ exist for all $k \in \K$ such that 
    \begin{subequations}\label{reformulation:soc_ub}
    \begin{align}
        & y_0 + \gamma \bar \lambda_k + \Delta t \sum_{l \leq k} \bar \Lambda_{kl} \leq \bar y,  && ~~ \forall k \in \K \label{reformulation:soc_ub:soc} \\
        & \bar \Lambda_{kk} \geq - \eta^\mathrm{c} x^0_k,  && ~~ \forall k \in \K \\
        & \bar \Lambda_{kk} \geq \eta^\mathrm{c} \left( x^\downarrow_k - x^0_k \right) - \bar \lambda_k, && ~~ \forall k \in \K \\
        & \bar \Lambda_{kk} \geq 0,
        && ~~ \forall k \in \K \\
        & (1 - \upsilon_{1k})\ubar x \leq x^0_{k} - x^\downarrow_{k} \leq \upsilon_{1k} \bar x, && ~~\forall k \in \K \setminus \{K\} \label{reformulation:soc_ub:upsilon1}\\
        & \upsilon_{2k} \ubar x \leq x^0_{k} \leq (1 - \upsilon_{2k})\bar x,
        && ~~\forall k \in \K \setminus \{K\} \label{reformulation:soc_ub:upsilon2} \\
        & \bar \Lambda_{kl} \geq \frac{x^\downarrow_{l} - x^0_{l}}{\eta^\mathrm{d}} - \bar \lambda_k + (1 - \upsilon_{1l}) \Delta \eta \, \ubar x, && ~~ \forall k,l \in \K: l < k \label{reformulation:soc_ub:Lbd1}\\
        & \bar \Lambda_{kl} \geq -\frac{x^0_{l}}{\eta^\mathrm{d}} + \upsilon_{2l} \Delta \eta \, \ubar x,
        && ~~ \forall k,l \in \K: l < k \label{reformulation:soc_ub:Lbd2} \\
        & \bar \Lambda_{kl} \geq \eta^\mathrm{c} (x^\downarrow_{l} - x^0_{l}) - \bar \lambda_k - \upsilon_{1l} \Delta \eta \, \bar x, && ~~ \forall k,l \in \K: l < k \label{reformulation:soc_ub:Lbd3} \\
        & \bar \Lambda_{kl} \geq -\eta^\mathrm{c} x^0_{l} - (1 - \upsilon_{2l}) \Delta \eta \, \bar x, 
        && ~~ \forall k,l \in \K: l < k \label{reformulation:soc_ub:Lbd4} \\
        & \bar \Lambda_{kl} x^\downarrow_{l} + \bar \lambda_k x^0_{l} \geq \upsilon_{2l} \frac{\ubar x (\bar x - \ubar x)}{\eta^\mathrm{d}} - \upsilon_{1l} \frac{\bar x^2}{4\eta^\mathrm{d}},
        && ~~ \forall k,l \in \K: l < k.\label{reformulation:soc_ub:bilinear}
    \end{align}
    \end{subequations}
\end{Prop}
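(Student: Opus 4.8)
\emph{Proof plan.}
The plan is to collapse the continuous-time robust constraint into one finite-dimensional concave maximization per trading interval, dualize each of them, and eliminate the multipliers that select the two linear pieces of the power-to-storage map using the binaries $\bm\upsilon$. First I would reduce to nonnegative signals: by the $\min$-representation~\eqref{eq:ymin}, the monotonicity of $y$ in $\xi$ (Proposition~\ref{prop:y}), and the symmetry of $\Xi$ (Proposition~\ref{prop:sym}), the largest SOC at time $t$ is attained at signals $\xi\le0$, so with $\zeta=-\xi\in\Xi^+$ and $x=x^0-\zeta x^\downarrow$ one gets $\max_{\xi\in\Xi}y(x^0,x^\uparrow,x^\downarrow,\xi,y_0,t)=y_0+\max_{\zeta\in\Xi^+}\int_0^t g(x^0(\tau)-\zeta(\tau)x^\downarrow(\tau))\,\mathrm{d}\tau$, where $g(v)=\min\{-\eta^\mathrm{c}v,-v/\eta^\mathrm{d}\}$ is the concave, decreasing, positively homogeneous power-to-storage map of Figure~\ref{fig:x_vs_ydot}.

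Next I would fix $t\in\T_k$ and set $s=t-(k-1)\Delta t\in[0,\Delta t]$. Since the integral depends only on $\zeta|_{[0,t]}$, replacing $\zeta$ by $L_tL^\dagger_t\zeta$ only frees deviation budget and cannot decrease $y$, so by Proposition~\ref{prop:Xi_k} it suffices to consider signals supported on $[0,t)$; because $z\mapsto g(x^0_l-zx^\downarrow_l)$ is concave, Jensen's inequality lets me take $\zeta$ constant on each $\T_l$ with $l<k$ and on $[(k-1)\Delta t,t)$. Positive homogeneity of $g$ then absorbs the interval lengths and the free endpoint $s$ into variables $b_l=\Delta t\,\zeta_l$ $(l<k)$ and $b_k=s\,\zeta_k$, yielding $\max_{t\in\T_k}\max_{\xi\in\Xi}y=y_0+V_k$ with
\[
V_k=\max\Bigl\{\textstyle\sum_{l<k}g(\Delta t\,x^0_l-b_lx^\downarrow_l)+g(s\,x^0_k-b_kx^\downarrow_k)\,:\,s\in[0,\Delta t],\ 0\le b_l\le\Delta t\ (l<k),\ 0\le b_k\le s,\ \textstyle\sum_{l\le k}b_l\le\gamma\Bigr\},
\]
so that~\eqref{R:y_upper} is equivalent to $y_0+V_k\le\bar y$ for every $k\in\K$. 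I expect this homogeneity step to be the conceptual crux: it turns the continuous evaluation time into a single extra variable $s$ per interval, and Example~\ref{ex:time_discretization} shows it cannot be bypassed by checking interval endpoints only.

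I would then rewrite each $g(\cdot)$ with a hypograph variable so that $V_k$ becomes a linear program in $(b_1,\dots,b_k,s,w_1,\dots,w_k)$, which is feasible ($b=0,\,s=0$) and bounded, and take its dual. The dual variables are $\bar\lambda_k\ge0$ for the budget, $\mu^\mathrm{c}_l,\mu^\mathrm{d}_l\ge0$ with $\mu^\mathrm{c}_l+\mu^\mathrm{d}_l=1$ for the two pieces of $g$ in interval $l\le k$, a multiplier $\rho_l\ge0$ for $b_l\le\Delta t$ $(l<k)$, one for $b_k\le s$, and $\bar\Lambda_{kk}\ge0$ for $s\le\Delta t$. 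Strong duality turns $y_0+V_k\le\bar y$ into the existence of a dual-feasible point of objective at most $\bar y-y_0$: collecting terms and setting $\bar\Lambda_{kl}=\rho_l-\eta^\mathrm{c}x^0_l\mu^\mathrm{c}_l-x^0_l\mu^\mathrm{d}_l/\eta^\mathrm{d}$ for $l<k$ gives~\eqref{reformulation:soc_ub:soc}; a direct exchange argument shows the partial interval $k$ never operates in the discharging regime at the primal optimum, so complementary slackness forces $\mu^\mathrm{c}_k=1$ there, no binary is needed, and the dual constraints attached to $s$ and $b_k$ collapse to the three $\bar\Lambda_{kk}$-inequalities; the bound $\bar\lambda_k\le(\bar x-\ubar x)/\eta^\mathrm{d}$ records that a time-unit of deviation budget is worth at most $x^\downarrow_l/\eta^\mathrm{d}$, using $x^\downarrow_l\le\bar x-\ubar x$ from the power-output bounds of Proposition~\ref{prop:charger}.

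The main obstacle is eliminating $\mu^\mathrm{c}_l,\mu^\mathrm{d}_l$ for the full intervals $l<k$. Writing $p=1/\eta^\mathrm{d}-\mu^\mathrm{c}_l\Delta\eta\in[\eta^\mathrm{c},1/\eta^\mathrm{d}]$, the constraint $\rho_l\ge0$ and the $b_l$-dual constraint bound $\bar\Lambda_{kl}$ below by $\max\{-px^0_l,\ p(x^\downarrow_l-x^0_l)-\bar\lambda_k\}$, a pointwise maximum of two affine functions of $p$. Introducing $\upsilon_{1l}$ and $\upsilon_{2l}$ to record the signs of $x^0_l-x^\downarrow_l$ and of $x^0_l$ via~\eqref{reformulation:soc_ub:upsilon1}--\eqref{reformulation:soc_ub:upsilon2}, I would show: when the power output keeps a constant sign as $\zeta_l$ runs over $[0,1]$, complementary slackness pins $p$ to the corresponding endpoint ($1/\eta^\mathrm{d}$ or $\eta^\mathrm{c}$) and the two bounds become~\eqref{reformulation:soc_ub:Lbd1}--\eqref{reformulation:soc_ub:Lbd4}, the $\Delta\eta\,\ubar x$ and $\Delta\eta\,\bar x$ offsets covering the remaining inequalities via $\ubar x\le x^0_l-x^\downarrow_l$ and $x^0_l\le\bar x$; and when the sign changes, $x^0_l\ge0\ge x^0_l-x^\downarrow_l$, a feasible $p\in[\eta^\mathrm{c},1/\eta^\mathrm{d}]$ exists exactly when the $p$-interval on which the increasing bound lies below $\bar\Lambda_{kl}$ meets the one on which the decreasing bound does---which, after clearing the positive denominators $x^0_l$ and $x^\downarrow_l-x^0_l$, is precisely $\bar\Lambda_{kl}x^\downarrow_l+\bar\lambda_kx^0_l\ge0$, i.e.\ \eqref{reformulation:soc_ub:bilinear} with $\upsilon_{1l}=\upsilon_{2l}=0$. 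In the remaining sign patterns the right-hand side of~\eqref{reformulation:soc_ub:bilinear} is nonpositive and follows from the $\bar\Lambda_{kl}$-inequalities given $0\le x^\downarrow_l\le\bar x$ and $\ubar x\le x^0_l\le\bar x$: the $\upsilon_{2l}\ubar x(\bar x-\ubar x)/\eta^\mathrm{d}$ term from $x^0_l\in[\ubar x,0]$, and the $\upsilon_{1l}\bar x^2/(4\eta^\mathrm{d})$ term by maximizing $(\bar x-x^\downarrow_l)x^\downarrow_l$ at $x^\downarrow_l=\bar x/2$. The converse is obtained by weak duality: any point feasible for~\eqref{reformulation:soc_ub} reconstructs, interval by interval, admissible $\mu^\mathrm{c}_l,\mu^\mathrm{d}_l,\rho_l$ (and $\mu^\mathrm{c}_k=1$ for the partial interval), hence a dual-feasible point whose objective is $\le\bar y-y_0$, so $y_0+V_k\le\bar y$.
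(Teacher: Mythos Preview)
Your plan is correct and lands on the same finite system, but the mechanics differ from the paper's proof in a way worth recording. The paper proceeds in two nested min--max swaps: for each fixed $t\in\T_k$ it linearizes the two pieces of $g$ with a weight $u_l\in[0,1]$, swaps $(\bm u,\bm\xi)$ by von Neumann, dualizes the budget to introduce $\bar\lambda_k$, and then \emph{solves} the inner problem in $u_l$ in closed form, producing a scalar value function $\varphi(x^0_l,x^\downarrow_l,\bar\lambda_k)$ (the piecewise formula in Figure~\ref{fig:varphi} and Table~\ref{tab:u}). The partial interval is then handled by a second von Neumann swap between $t$ and $\bar\lambda_k$ (this is where the upper bound $\bar\lambda_k\le(\bar x-\ubar x)/\eta^{\mathrm d}$ enters, as a compactness requirement), after which a short case analysis shows $[\varphi(x^0_k,x^\downarrow_k,\bar\lambda_k)]^+=\max\{0,\eta^{\mathrm c}(x^\downarrow_k-x^0_k)-\bar\lambda_k,-\eta^{\mathrm c}x^0_k\}$. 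The disjunctive constraints and all big-$M$ constants are then read off from the explicit pieces of $\varphi$ (Table~\ref{tab:M}).

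Your route avoids both min--max swaps. The positive-homogeneity step $s\,g(x^0_k-\zeta_k x^\downarrow_k)=g(sx^0_k-b_kx^\downarrow_k)$ embeds the evaluation time as a \emph{primal} variable, so one LP dual produces all multipliers at once; your $\mu^{\mathrm c}_l,\mu^{\mathrm d}_l$ play the role of the paper's $u_l$ without ever materializing $\varphi$. Your exchange argument for the partial interval (scale $(s,b_k)\to 0$ whenever the $k$th contribution is negative) is exactly the primal counterpart of the paper's $[\varphi]^+$ observation, and your derivation of the bilinear inequality by projecting out $p=\eta^{\mathrm c}\mu^{\mathrm c}_l+\mu^{\mathrm d}_l/\eta^{\mathrm d}$ via interval intersection is a clean alternative to the paper's case split on $\varphi$. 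What the paper's detour through $\varphi$ buys is reusability: $\varphi$ is the object on which Sections~\ref{sec:evaluating_feasibility} and~\ref{sec:MILPs} (feasibility LP, the envelopes $\ubar\varphi,\bar\varphi$, and the gap bound of Proposition~\ref{prop:bound}) are built. Your route is more direct but does not produce that intermediate object.

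One point to tighten: ``complementary slackness forces $\mu^{\mathrm c}_k=1$'' is loose when the primal optimum has $s^\star=0$ or $s^\star x^0_k-b_k^\star x^\downarrow_k=0$, since both hypograph constraints are then tight. The clean statement your exchange argument actually proves is that dropping the $1/\eta^{\mathrm d}$-constraint on interval $k$ leaves the primal optimum unchanged; strong duality then gives an optimal dual with $\mu^{\mathrm d}_k=0$, which is what you need.
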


The robust upper bound on the SOC is equivalent to $\frac{K(K-1)}{2}$ bilinear constraints, $(2K+4)(K-1)$ mixed-binary linear constraints, and $4K$ linear constraints, and requires $2(K-1)$ auxiliary binary variables. For fixed market decisions, the binary variables can be determined analytically, simplifying the bound to $\frac{5K(K-1)}{2} + 4K$ linear constraints. Combined with the constraints from the other bounds, the  feasibility of candidate solutions can thus be checked by solving a linear program.

\subsection{Intuition Behind the Upper Bound on the SOC}

The proof of~Proposition~\ref{prop:soc_ub} reveals that for any fixed time $t \in \T_k$ and any fixed $k \in \K$, there exists a piecewise constant regulation signal in $L^\dagger_t \Xi^+$ that maximizes the SOC at time $t$ because the SOC is concave and nonincreasing in the regulation signal. Dualizing the budget constraint in the definition of $L^\dagger_t \Xi^+$ with associated variable~$\bar \lambda_k$ yields
\begin{align}
    \max_{\xi \in \Xi}~y(x^0, x^\uparrow, x^\downarrow, \xi, y_0, t)
    = \min_{0 \leq \bar \lambda_k} \,  y_0 + \gamma \bar \lambda_k + \sum_{l = 1}^k \sigma_l(t) \varphi(x^0_l, x^\downarrow_l, \bar \lambda_k),
\end{align}
where
\begin{equation}\label{eq:varphi}
    \varphi(x^0_l, x^\downarrow_l, \bar \lambda_k)
    =
    \min_{0 \leq u_l \leq 1} \max\Big\{ \Big( \frac{1}{\eta^\mathrm{d}} - \Delta \eta \, u_l \Big) \Big( x^\downarrow_l - x^0_l \Big) - \bar \lambda_k, \, \Big( \Delta \eta \, u_l - \frac{1}{\eta^\mathrm{d}} \Big) x^0_l \Big\}.
\end{equation}
The variable $\bar \lambda_k$ can be interpreted as the marginal change in the SOC with respect to an increase in the deviation budget~$\gamma$. For any period $l =1,\ldots, k$, the optimal value function~$\varphi$, shown in Figure~\ref{fig:varphi}, measures the maximum rate of change of the SOC caused by the arbitrage decision $x^0_l$ and any surplus from spending deviation time in period~$l$. 

\subsubsection{Evaluating Feasibility of Candidate Decisions }\label{sec:evaluating_feasibility}
For candidate market decisions $\bm x^0$ and $\bm x^\downarrow$ and any fixed time $t \in \T_k$, the maximum SOC can be computed by solving the one-dimensional convex piecewise-linear problem $\min_{0 \leq \bar \lambda_k} \,  y_0 + \gamma \bar \lambda_k + \sum_{l = 1}^k \sigma_l(t) \varphi(x^0_l, x^\downarrow_l, \bar \lambda_k)$, which can be transformed into a multi-dimensional linear program by introducing $k$ epigraphical decision variables $\bar \Lambda_{kl}$ and requiring $\bar \Lambda_{kl} \geq \varphi(x^0_l, x^\downarrow_l, \bar \lambda_k)$ for $l=1,\ldots,k$. 

\begin{figure}
    \centering
    \begin{tikzpicture}[scale=0.75, transform shape]
        \draw[-Stealth] (0, -2.25) -- (0, 2.25);
        \node[above] at (1.3125, 2.25) {$\max\left\{ \frac{x^\downarrow_l - x^0_l}{\eta^\mathrm{d}} - \bar \lambda_k, -\frac{x^0_l}{\eta^\mathrm{d}} \right\}$};
        \node[below] at (1.3125, -2.25) {Case $x^\downarrow_l \leq x^0_l$};
        \draw[-Stealth] (0, 0) -- (4.25, 0) node[right] {$\bar \lambda_k$};
        \draw[-, color = blue, thick] (0, -1) -- (1, -2) -- (4.25, -2);
        \draw[-, dotted] (0, -2) node[left] {$-\frac{x^0_l}{\eta^\mathrm{d}}$} -- (1, -2) -- (1, 0) node[above] {$\frac{x^\downarrow_l}{\eta^\mathrm{d}}$};
        \node[left] at (0, -1) {$\frac{x^\downarrow_l - x^0_l}{\eta^\mathrm{d}}$};
        \draw[-Stealth] (7.5, -2.25) -- (7.5, 2.25);
        \node[above] at (8.8125, 2.25) {$\max\left\{ \eta^\mathrm{c}(x^\downarrow_l - x^0_l) - \bar \lambda_k, -\bar \lambda_k \frac{x^0_l}{x^\downarrow_l}, -\frac{x^0_l}{\eta^\mathrm{d}} \right\}$};
        \node[below] at (8.8125, -2.25) {Case $0 \le x^0_l \le x^\downarrow_l \, \land \, x^\downarrow_l > 0$};
        \draw[-Stealth] (7.5, 0) -- (11.75, 0) node[right] {$\bar \lambda_k$};
        \draw[-, color = blue, thick] (7.5, 1) -- (9, -0.5) -- (11, -1.5) -- (11.75, -1.5);
        \node[left] at (7.5, 1) {$\eta^\mathrm{c} (x^\downarrow_l - x^0_l)$};
        \draw[-, dotted] (7.5,-0.5) node[left] {$-\eta^\mathrm{c} x^0_l$} -- (9, -0.5) -- (9, 0) node[above] {$\eta^\mathrm{c} x^\downarrow_l$};
        \draw[-, dotted] (7.5,-1.5) node[left] {$-\frac{x^0_l}{\eta^\mathrm{d}}$} -- (11, -1.5) -- (11.0, 0) node[above] {$\frac{x^\downarrow_l}{\eta^\mathrm{d}}$};
        \draw[-Stealth] (15, -2.25) -- (15, 2.25);
        \node[above] at (16.3125, 2.25) {$\max\left\{ \eta^\mathrm{c}(x^\downarrow_l - x^0_l) - \bar \lambda_k, -\eta^\mathrm{c} x^0_l \right\}$};
        \node[below] at (16.3125, -2.25) {Case $x^0_l \leq 0$};
        \draw[-Stealth] (15, 0) -- (19.25, 0) node[right] {$\bar \lambda_k$};
        \draw[-, color = blue, thick] (15, 1.5) -- (16, 0.5) -- (19.25, 0.5);
        \node[left] at (15,1.5) {$\eta^\mathrm{c} (x^\downarrow_l - x^0_l)$};
        \draw[-, dotted] (15,0.5) node[left] {$-\eta^\mathrm{c} x^0_l$} -- (16, 0.5) -- (16, 0) node[below] {$\eta^\mathrm{c} x^\downarrow_l$};
    \end{tikzpicture}
    \caption{The optimal value function $\varphi(x^0_l, x^\downarrow_l, \bar \lambda_k)$ for fixed $x^0_l$ and $x^\downarrow_l$.}
    \label{fig:varphi}
    \end{figure}
To find the maximum SOC over $t \in \T_k$, we note that $\varphi(x^0_l, x^\downarrow_l, \bar \lambda_k)$ vanishes for all $\bar \lambda_k$ greater than $\max_{l=1,\ldots,k} x^\downarrow_l/\eta^\mathrm{d}$. The maximum SOC thus equals%
\begin{subequations}
\begin{align}
    & y_0 + \sup_{t \in \T_k} \, \min_{0 \leq \bar \lambda_k } \, \gamma \bar \lambda_k + \sum_{l = 1}^{k} \sigma_l(t) \varphi(x^0_l, x^\downarrow_l, \bar \lambda_k) \\
    = & y_0 + \min_{0 \leq \bar \lambda_k \leq \bar{\bar \lambda}} \, \gamma \bar \lambda_k + \Delta t \sum_{l = 1}^{k-1} \varphi(x^0_l, x^\downarrow_l, \bar \lambda_k) + \sup_{t \in \T_k} \, (t - (k-1)\Delta t) \varphi(x^0_k, x^\downarrow_k, \bar \lambda_k) \\
     = & y_0 + \min_{0 \leq \bar \lambda_k \leq \bar{\bar \lambda}} \, \gamma \bar \lambda_k + \Delta t \sum_{l = 1}^{k-1} \varphi(x^0_l, x^\downarrow_l, \bar \lambda_k) + \Delta t \, \big[\varphi(x^0_k, x^\downarrow_k, \bar \lambda_k)\big]^+,
\end{align}
\end{subequations}
where $\bar{\bar \lambda} \geq \max_{l=1,\ldots,k} x^\downarrow_l/\eta^\mathrm{d}$. The first equality follows from \cite{vonNeumman1928spiel}'s minimax theorem. The second equality holds as it is optimal to set $t = k\Delta t$ if $\varphi(x^0_k, x^\downarrow_k, \bar \lambda_k) \geq 0$ and $=(k-1)\Delta t$ otherwise. Surprisingly, finding the maximum SOC over $t \in \T_k$ can again be done by solving a one-dimensional convex piecewise-linear optimization problem. The corresponding linear program requires just one additional constraint, $\bar \Lambda_{kk} \geq 0$, compared to finding the maximum SOC at a fixed~$t$. 

\subsubsection{Optimizing over Candidate Decisions}

\begin{figure}
\centering
\begin{subfigure}{0.495\textwidth}
\includegraphics[width=\textwidth]{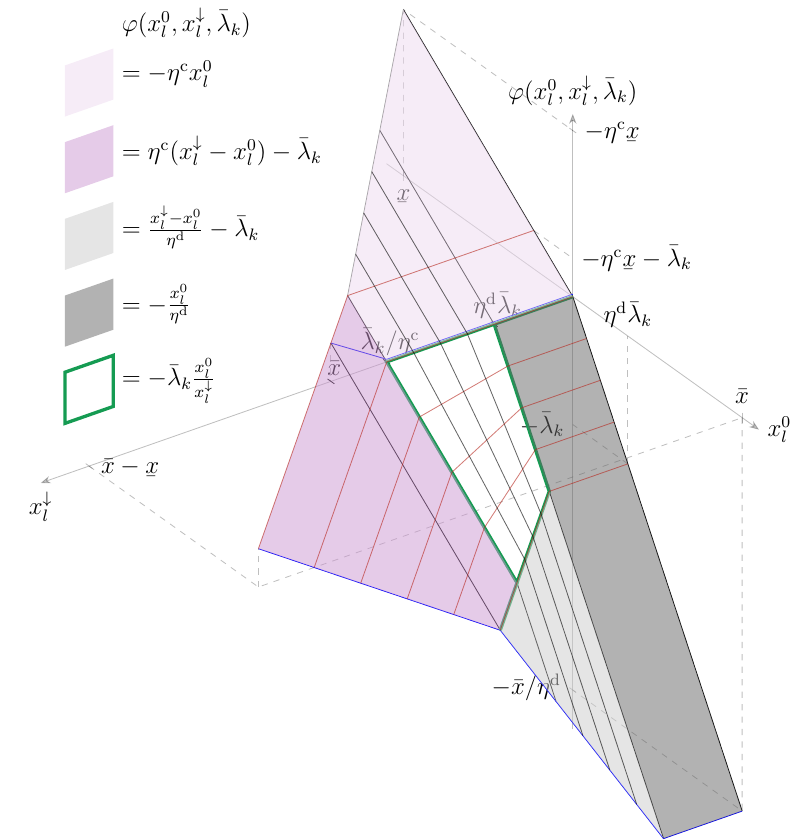}
    \caption{True function.}
    \label{fig:varphi_true}
\end{subfigure}
\hfill
\begin{subfigure}{0.495\textwidth}    \includegraphics[width=\textwidth]{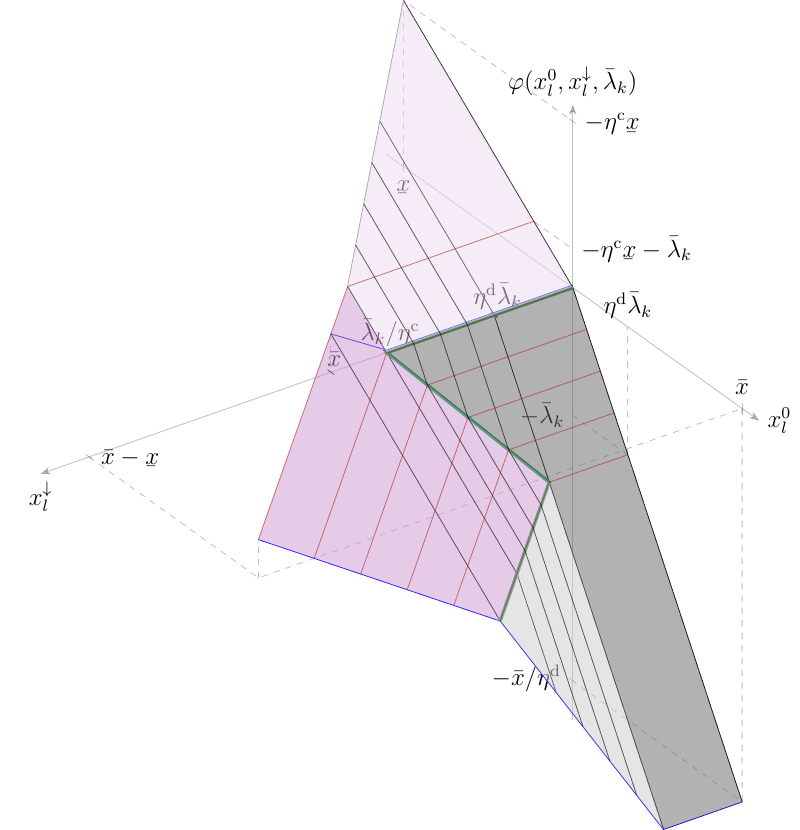}
    \caption{Lower bound}
    \label{fig:varphi_lb}
\end{subfigure}
\hfill
\begin{subfigure}{0.495\textwidth}
\includegraphics[width=\textwidth]{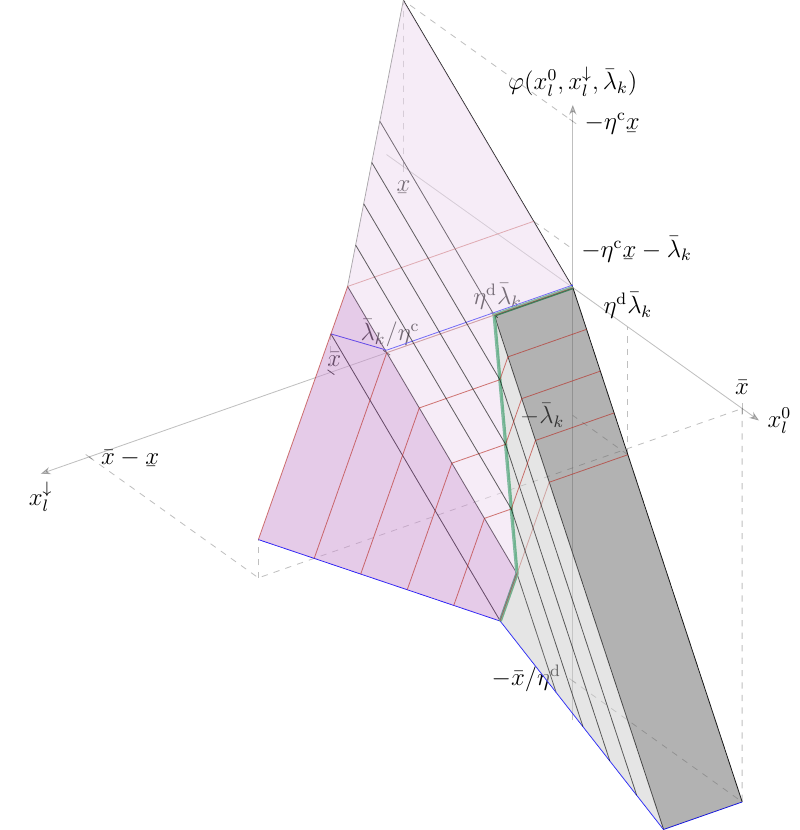}
    \caption{Upper bound.}
    \label{fig:varphi_ub}
\end{subfigure}
\caption{The optimal value function $\varphi(x^0_l, x^\downarrow_l, \bar \lambda_k)$ for fixed $\bar \lambda_k$. Contour lines are in brown, iso-$x^0_l$ lines are in blue, iso-$x^\downarrow_l$ lines are in black, the boundary between charging and discharging is in green.}
\label{fig:varphi_lbd}
\end{figure}

Figure~\ref{fig:varphi_true} depicts the $\varphi$-function for fixed $\bar \lambda_k$ and variable $x^0_l$ and $x^\downarrow_l$. As expected, the function is identical to the power-to-storage vs power output curve in Figure~\ref{fig:x_vs_ydot} if $x^\downarrow_l = 0$. As $x^\downarrow_l$ increases, the power that can be bought for arbitrage, \ie, the minimum feasible value of $x^0_l$, decreases in magnitude because downregulation takes up headroom in the lower bound on the power output. The $\varphi$-function is a saddle function in the market bids if $x^\downarrow_l \in [\eta^\mathrm{d} \bar \lambda_k, \frac{\bar \lambda_k}{\eta^\mathrm{c}}]$ and $x^0_l \in [0, x^\downarrow_l)$, and nonconvex piecewise affine otherwise. Its graph consists of two hyperplanes corresponding to charging at the arbitrage bid ($\varphi(x^0_l, x^\downarrow_l, \bar \lambda_k) = -\eta^\mathrm{c}x^0_l$) and discharging at the arbitrage bid ($\varphi(x^0_l, x^\downarrow_l, \bar \lambda_k) = -\frac{x^0_l}{\eta^\mathrm{d}}$), two hyperplanes that capture the effect of downregulation on the state-of-charge ($\varphi(x^0_l, x^\downarrow_l, \bar \lambda_k) = \eta^\mathrm{c}(x^\downarrow_l - x^0_l) - \bar \lambda_k$ and $\varphi(x^0_l, x^\downarrow_l, \bar \lambda_k) = \frac{x^\downarrow_l - x^0_l}{\eta^\mathrm{d}} - \bar \lambda_k$), and a bilinear region ($\varphi(x^0_l, x^\downarrow_l, \bar \lambda_k) = - \bar \lambda_k \frac{x^0_l}{x^\downarrow_l}$) that connects the four hyperplanes.

Incorporating the upper bound on the SOC into the storage operator's decision problem, where $\bm x^0$ and $\bm x^\downarrow$ are decision variables, is challenging because of these nonconvexities. We handle these challenges by introducing $2(K-1)$ binary variables, $(2K + 4)(K-1)$ mixed-binary linear big-$M$ constraints, and $\frac{K(K-1)}{2}$ mixed-binary bilinear big-$M$ constraints \citep[p.~67]{conforti2014integer}. To improve numerical stability, we use the lowest admissible~$M$, given by Table~\ref{tab:M} in the proof of Proposition~\ref{prop:soc_ub}, for each constraint. The table reveals that all $M$-constants are proportional to the specific loss~$\Delta \eta$, except for the $M$-constant required by the bilinear constraints, which is proportional to $\frac{1}{\eta^\mathrm{d}}$. As losses increase, so do the $M$-constants, which renders the numerical solution of the optimization problem more challenging, as one would expect. In fact, in the absence of losses, the problem reduces to a linear program, as we will establish in Proposition~\ref{prop:tractable}.

\subsection{Resulting Optimization Problem}
Having discussed Propositions \ref{prop:charger}--\ref{prop:soc_ub}, we now show that problem~\eqref{pb:R} is equivalent to the finite-dimensional mixed-binary bilinear problem
\begin{mini}|s|[2]               
    {}
    {c(\bm x^0, \bm x^\uparrow, \bm x^\downarrow) + \phi(\bm x^0, \bm x^\uparrow, \bm x^\downarrow, y_0)}
    {\tag{P}\label{pb:P}}
    {}
    \addConstraint{\bm x^0 \in \mathbb{R}^K,~\bm x^\uparrow, \bm x^\downarrow \in \mathbb{R}^K_+
    }{}{}
    \addConstraint{\bm \alpha, \bm \beta \in \mathbb{R}^K, \ubar{\bm{\lambda}}, \bar{\bm \lambda} \in \mathbb{R}^K_+, \ubar{\bm \Lambda}_k \in \mathbb{R}^k_+, \bar{\bm \Lambda}_k \in \mathbb{R}^k, \bm \upsilon \in \{0,1\}^{2(K-1)},
    }{}{\quad \forall k \in \K
    }
    \addConstraint{
    \eqref{reformulation:xlower}, 
    \eqref{reformulation:xupper},
    \eqref{reformulation:soc_lb},
    \eqref{reformulation:soc_ub}.
    }{}{}
\end{mini}
\begin{Th}\label{th:P}
    The problems~\eqref{pb:R} and~\eqref{pb:P} are equivalent. 
\end{Th}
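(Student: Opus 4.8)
The plan is to recognise that Theorem~\ref{th:P} is essentially the conjunction of Propositions~\ref{prop:charger}, \ref{prop:soc_lb}, and~\ref{prop:soc_ub}, glued together by the bijection between piecewise-constant market decisions and their coordinate vectors. Concretely, I would establish an objective-preserving correspondence between the feasible set of~\eqref{pb:R} and the projection of the feasible set of~\eqref{pb:P} onto the $(\bm x^0,\bm x^\uparrow,\bm x^\downarrow)$-coordinates, and conclude that the two problems have the same optimal value and that optimal solutions of one induce optimal solutions of the other.

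First I would record the function--vector dictionary. Every market decision lies in $\set{F}(\T,\cdot)$, and $\sigma_l(T)=\Delta t$ for all $l\in\K$, so the lifting operator $L_T$ and its scaled adjoint $L^\dagger_T$ are mutually inverse: averaging a function that is already constant on each $\T_k$ returns it, so $L_T L^\dagger_T$ is the identity on $\set{F}(\T,\R)$, while $L^\dagger_T L_T=\mathrm{Id}$ on $\R^K$. Restricting to nonnegative functions and vectors, this gives bijections $\set{F}(\T,\R)\leftrightarrow\R^K$ and $\set{F}(\T,\R_+)\leftrightarrow\R^K_+$ under which $(x^0,x^\uparrow,x^\downarrow)$ corresponds to $(\bm x^0,\bm x^\uparrow,\bm x^\downarrow)=(L^\dagger_T x^0,L^\dagger_T x^\uparrow,L^\dagger_T x^\downarrow)$. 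Since the objective of~\eqref{pb:P} depends only on $(\bm x^0,\bm x^\uparrow,\bm x^\downarrow,y_0)$ and not on the auxiliary variables $\bm\alpha,\bm\beta,\ubar{\bm{\lambda}},\bar{\bm{\lambda}},\{\ubar{\bm{\Lambda}}_k\},\{\bar{\bm{\Lambda}}_k\},\bm{\upsilon}$, the two objective functions agree along this correspondence.

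Next I would verify the two inclusions. Given a feasible $(x^0,x^\uparrow,x^\downarrow)$ for~\eqref{pb:R}, set $\bm x^0=L^\dagger_T x^0$ and similarly for $\bm x^\uparrow,\bm x^\downarrow$. The ``only if'' directions of Propositions~\ref{prop:charger}, \ref{prop:soc_lb}, and~\ref{prop:soc_ub}, applied to~\eqref{R:x_lower}--\eqref{R:x_upper}, to~\eqref{R:y_lower}, and to~\eqref{R:y_upper} respectively, produce all the auxiliary variables, in the declared domains, satisfying~\eqref{reformulation:xlower}, \eqref{reformulation:xupper}, \eqref{reformulation:soc_lb}, and~\eqref{reformulation:soc_ub}; hence $(\bm x^0,\bm x^\uparrow,\bm x^\downarrow)$ together with these variables is feasible for~\eqref{pb:P} with the same objective value, so the optimal value of~\eqref{pb:P} is at most that of~\eqref{pb:R}. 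Conversely, given a feasible point of~\eqref{pb:P}, set $x^0=L_T\bm x^0$, $x^\uparrow=L_T\bm x^\uparrow$, $x^\downarrow=L_T\bm x^\downarrow$; discarding the auxiliary variables but keeping the knowledge that they exist, the ``if'' directions of the same three propositions recover constraints~\eqref{R:x_lower}--\eqref{R:y_upper}, so $(x^0,x^\uparrow,x^\downarrow)$ is feasible for~\eqref{pb:R} with the same objective value, and the optimal value of~\eqref{pb:R} is at most that of~\eqref{pb:P}. Equality of the optimal values and the claimed correspondence between optimal solutions follow.

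Since every substantive step is already contained in Propositions~\ref{prop:soc_lb} and~\ref{prop:soc_ub} (the harder of which is the total-unimodularity/duality argument behind the upper bound, which in turn relies on Assumption~\ref{ass:div}), I do not anticipate a real obstacle in the proof of the theorem itself. The only points needing care are bookkeeping: (i) checking that feasibility of~\eqref{pb:P} is literally the existential statement ``there exist auxiliary variables such that \eqref{reformulation:xlower}--\eqref{reformulation:soc_ub} hold,'' so that it matches the right-hand sides of the biconditionals verbatim and nothing is strengthened or weakened in passing between the problems; (ii) confirming that the variable domains in~\eqref{pb:P} coincide with those in the propositions (nonnegativity of $\ubar{\bm{\lambda}}$ and $\ubar{\bm{\Lambda}}_k$, sign-freeness of $\bm\alpha$, $\bm\beta$, and $\bar{\bm{\Lambda}}_k$, integrality of $\bm{\upsilon}$, and the box on $\bar{\bm{\lambda}}$); and (iii) the elementary fact that $L^\dagger_T$ and $L_T$ are mutually inverse on piecewise-constant functions, which makes the passage between function form and vector form lossless.
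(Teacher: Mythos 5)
Your proposal is correct and matches the paper's argument, which simply observes that Theorem~\ref{th:P} follows immediately from Propositions~\ref{prop:charger}, \ref{prop:soc_lb}, and~\ref{prop:soc_ub}; your extra bookkeeping (the $L_T$/$L^\dagger_T$ dictionary, objective preservation, and the two feasibility inclusions) is just a more explicit spelling-out of the same one-line proof.
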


Theorem~\ref{th:P} follows immediately from Propositions \ref{prop:charger}--\ref{prop:soc_ub}. Despite the four challenges stated at the end of Section~\ref{sec:description}, we derived an exact finite-dimensional reformulation of the original problem. The feasibility of candidate market decisions can be checked by solving the linear program to which problem \eqref{pb:P} reduces when $\bm x^0$, $\bm x^\downarrow$, and $\bm x^\uparrow$ are fixed.

It is technically possible to solve problem~\eqref{pb:P} with commercially available software. Gurobi, for example, has been relying on spatial branch-and-bound, \ie, solving a series of mixed-integer linear restrictions and relaxations, to handle bilinear constraints since version~9.0 \citep{gurobi9bilinear}. However, at the time of writing, the required computational effort did not scale well with the problem size.
\section{Towards Tractability}\label{sec:tractability}

We first present several special cases in which problem~\eqref{pb:P} is equivalent to more tractable problems. Next, we derive a mixed-binary linear relaxation and restriction for the general case.

\subsection{Tractable Cases}\label{sec:tractable_cases}
We establish that problem~\eqref{pb:P} reduces to a linear or mixed-integer linear program in some cases.
\begin{Prop}\label{prop:tractable}
    Problem~\eqref{pb:P} reduces to
    \begin{enumerate}
        \item a mixed-integer linear program if no downregulation is sold~$(\xdn = 0)$,
        \item a linear program if no electricity is sold for arbitrage~$(x^0 \leq 0)$, or there is only one trading interval~$(K=1)$, or there are no charging and discharging losses $(\eta^\mathrm{c} = \eta^\mathrm{d} = 1$).
    \end{enumerate}
\end{Prop}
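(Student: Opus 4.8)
The plan is to isolate the only two features of problem~\eqref{pb:P} that keep it from being a linear program --- the binary vector $\bm\upsilon$ that enters constraints~\eqref{reformulation:soc_ub:upsilon1}--\eqref{reformulation:soc_ub:Lbd4},~\eqref{reformulation:soc_ub:bilinear}, and the bilinear products $\bar\Lambda_{kl}x^\downarrow_l$ and $\bar\lambda_k x^0_l$ in~\eqref{reformulation:soc_ub:bilinear} --- and to show that in each listed case these features are either absent or can be eliminated without changing the feasible set. Constraints~\eqref{reformulation:xlower}--\eqref{reformulation:xupper} and~\eqref{reformulation:soc_lb} are already linear, so it then suffices to recall that $c+\phi$ is linear in our setting (a piecewise-linear objective is reduced by a standard epigraph argument). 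Throughout, I would reason from the value function~\eqref{eq:varphi}, whose piecewise-linearization constraints~\eqref{reformulation:soc_ub} implement: in each of the four cases $\varphi$ degenerates, and the reformulation degenerates with it.

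For part~1, putting $\xdn=0$ kills the product $\bar\Lambda_{kl}x^\downarrow_l$ outright, and~\eqref{reformulation:soc_ub:bilinear} collapses to $\bar\lambda_k x^0_l\ge\upsilon_{2l}\frac{\ubar x(\bar x-\ubar x)}{\eta^\mathrm{d}}-\upsilon_{1l}\frac{\bar x^2}{4\eta^\mathrm{d}}$. I would then verify this inequality is implied by the remaining constraints: if $x^0_l\ge 0$ the right-hand side is nonpositive (using~\eqref{reformulation:soc_ub:upsilon2} to pin down $\upsilon_{2l}$) while the left-hand side is nonnegative; if $x^0_l<0$ then~\eqref{reformulation:soc_ub:upsilon2} forces $\upsilon_{2l}=1$ and the box bound $\bar\lambda_k\le(\bar x-\ubar x)/\eta^\mathrm{d}$ of Proposition~\ref{prop:soc_ub} gives $\bar\lambda_k x^0_l\ge\frac{\ubar x(\bar x-\ubar x)}{\eta^\mathrm{d}}$, which dominates the right-hand side. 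Hence~\eqref{reformulation:soc_ub:bilinear} may be dropped and~\eqref{pb:P} becomes a mixed-integer linear program. (Equivalently, evaluating~\eqref{eq:varphi} at $x^\downarrow_l=0$ yields $\varphi(x^0_l,0,\bar\lambda_k)=\min\{-\eta^\mathrm{c}x^0_l,\,-x^0_l/\eta^\mathrm{d}\}$, independent of $\bar\lambda_k$ and the minimum of two affine functions of $x^0_l$; the epigraphical requirement $\bar\Lambda_{kl}\ge\varphi$ is then a genuine disjunction, which the binary $\upsilon_{2l}$ already encodes, so no bilinear cut is needed but the integer variable --- and hence a mixed-integer rather than a plain linear program --- is unavoidable.)

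For part~2 I would treat the three sub-cases separately. If $K=1$, the index set $\{(k,l):l<k\le K\}$ is empty and $\bm\upsilon\in\{0,1\}^{2(K-1)}=\{0,1\}^0$, so~\eqref{pb:P} contains neither integer variables nor bilinear terms and is a linear program outright. If $x^0\le 0$, then~\eqref{pb:R} is the no-arbitrage setting studied in~\citet{my_v2g_lp}, whose Theorems~1 and~3 establish an exact linear-programming reformulation; I would also derive this directly from~\eqref{pb:P} by showing that fixing $\upsilon_{1l}=0$ and $\upsilon_{2l}=1$ for every $l$ loses no generality --- these values are feasible in~\eqref{reformulation:soc_ub:upsilon1}--\eqref{reformulation:soc_ub:upsilon2} because $x^0_l\le 0\le x^\downarrow_l$ and $x^0_l-x^\downarrow_l\ge\ubar x$ by~\eqref{reformulation:xupper}, they turn the quartet~\eqref{reformulation:soc_ub:Lbd1}--\eqref{reformulation:soc_ub:Lbd4} into linear inequalities capturing the two affine pieces of $\varphi$ on the regime $x^0_l\le 0$ of Figure~\ref{fig:varphi}, and~\eqref{reformulation:soc_ub:Lbd4} forces $\bar\Lambda_{kl}\ge-\eta^\mathrm{c}x^0_l\ge 0$ so that the left-hand side of~\eqref{reformulation:soc_ub:bilinear} is at least $\frac{\ubar x(\bar x-\ubar x)}{\eta^\mathrm{d}}$ and that constraint is slack. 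If $\eta^\mathrm{c}=\eta^\mathrm{d}=1$, then $\Delta\eta=0$, so~\eqref{reformulation:soc_ub:Lbd1}--\eqref{reformulation:soc_ub:Lbd4} collapse pairwise onto $\bar\Lambda_{kl}\ge x^\downarrow_l-x^0_l-\bar\lambda_k$ and $\bar\Lambda_{kl}\ge-x^0_l$ --- free of $\bm\upsilon$ --- and~\eqref{eq:varphi} reduces to the jointly convex $\varphi=\max\{x^\downarrow_l-x^0_l-\bar\lambda_k,\,-x^0_l\}$, whose epigraph those two inequalities describe exactly, so~\eqref{reformulation:soc_ub:bilinear} is redundant and~\eqref{pb:P} is a linear program; this matches the fact that $\eta^\mathrm{c}=\eta^\mathrm{d}=1$ makes the SOC affine in the regulation signal~\citep[Proposition~7]{my_v2g_lp}.

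The main obstacle I anticipate is the redundancy argument for~\eqref{reformulation:soc_ub:bilinear} and, in the $x^0\le 0$ sub-case, the verification that fixing $\bm\upsilon$ loses no generality: both require tracking the signs of the $\bm\upsilon$- and $\Delta\eta$-coefficients through every inequality of~\eqref{reformulation:soc_ub} and invoking the box bounds on $\bar{\bm\lambda}$, $\bm x^0$, and $\bm x^\downarrow$ from Proposition~\ref{prop:soc_ub}. This is routine but case-heavy; the unifying point is that~\eqref{reformulation:soc_ub} merely linearizes~\eqref{eq:varphi}, and in each special case $\varphi$ is piecewise affine --- convex when $x^0\le 0$, $K=1$, or $\eta^\mathrm{c}=\eta^\mathrm{d}=1$, and concave (a minimum of two lines) when $\xdn=0$ --- so the bilinear cut is everywhere superfluous while the disjunctive binaries survive precisely in the single concave case, which is exactly what distinguishes part~1 from part~2.
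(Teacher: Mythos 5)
Your proposal is correct and follows essentially the same route as the paper: in each case you show the bilinear cut~\eqref{reformulation:soc_ub:bilinear} is absent or implied by the remaining constraints and box bounds (and that the binaries vanish or can be fixed), which is exactly the paper's argument, just spelled out in more detail via the $\varphi$-function. The only point to tighten, for the lossless case $\eta^\mathrm{c}=\eta^\mathrm{d}=1$, is that $\bm\upsilon$ still appears in~\eqref{reformulation:soc_ub:upsilon1}--\eqref{reformulation:soc_ub:upsilon2}, so you should add the (easy) remark that for any market decisions satisfying the power bounds a consistent binary assignment exists and then enters no binding constraint, so these constraints and the binaries can be dropped — a step the paper's own proof also glosses over.
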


If no power is sold for downregulation, the storage operator faces a classical arbitrage problem with $K-1$ binary variables to model the complementarity between charging and discharging. If prices are nonnegative, a continuous relaxation is optimal because any energy dissipated during simultaneous charging and discharging would lead to economic losses \citep[p.~84]{JAT15}. 

If no power is sold for arbitrage, we recover a linear program as in \cite{my_v2g_lp}. If there is only one trading interval $(K = 1)$, the resulting linear program admits an analytical solution~\citep{my_v2g_as}. In this setting, there are no price fluctuations and hence no arbitrage opportunities during the planning horizon. However, inter-horizons arbitrage opportunities may be captured by the cost-to-go function~$\phi$. In the absence of losses~$(\eta^\mathrm{c} = \eta^\mathrm{d} = 1)$, the SOC is affine in the market bids and the regulation signal, and we recover another linear program.

\subsection{Mixed-Binary Linear Relaxation and Restriction}\label{sec:MILPs}

Deleting the bilinear constraints~\eqref{reformulation:soc_ub:bilinear} yields a mixed-integer linear relaxation
\begin{mini}|s|[2]               
    {}
    {c(\bm x^0, \bm x^\uparrow, \bm x^\downarrow) + \phi(\bm x^0, \bm x^\uparrow, \bm x^\downarrow, y_0)}
    {\tag{\underline{P}}\label{pb:P:relax}}
    {}
    \addConstraint{\bm x^0 \in \mathbb{R}^K,~\bm x^\uparrow, \bm x^\downarrow \in \mathbb{R}^K_+
    }{}{}
    \addConstraint{\bm \alpha, \bm \beta \in \mathbb{R}^K, \ubar{\bm{\lambda}}, \bar{\bm \lambda} \in \mathbb{R}^K_+, \ubar{\bm \Lambda}_k \in \mathbb{R}^k_+, \bar{\bm \Lambda}_k \in \mathbb{R}^k, \bm \upsilon \in \{0,1\}^{2(K-1)},
    }{}{\quad \forall k \in \K
    }
    \addConstraint{
    \eqref{reformulation:xlower}, 
    \eqref{reformulation:xupper},
    \eqref{reformulation:soc_lb},
    \text{\eqref{reformulation:soc_ub:soc}--\eqref{reformulation:soc_ub:Lbd4}},
    }{}{}
\end{mini}
which can be used to obtain a lower bound on the optimal value of the original problem. Such a bound can be used to judge the quality of any feasible solution. A feasible solution can be found by solving the mixed-integer linear restriction obtained by introducing $(K-1)$ additional binary variables $\bm \upsilon_3 \in \{0,1\}^{(K-1)}$ and replacing the bilinear constraints~\eqref{reformulation:soc_ub:bilinear} by
\begin{subequations}
\label{reformulation:restriction}
\begin{align}
    & x^\downarrow_k - (1 - \eta^\mathrm{c} \eta^\mathrm{d})x^0_k - \eta^\mathrm{d} \bar \lambda_k \geq -((2 - \eta^\mathrm{c} \eta^\mathrm{d}) \bar x - \ubar x)(1 - \upsilon_{3k}), && \forall k \in \K \setminus \{K\}, \\
    & x^\downarrow_k - (1 - \eta^\mathrm{c} \eta^\mathrm{d})x^0_k - \eta^\mathrm{d} \bar \lambda_k \leq \upsilon_{3k}(\eta^\mathrm{c}\eta^\mathrm{d} \bar x - \ubar x), && \forall k \in \K \setminus \{K\}, \\
    & \bar \Lambda_{kl} \geq - \eta^\mathrm{c} x^0_l - (\upsilon_{1l} + 1 - \upsilon_{3l}) \Delta \eta \, \bar x, \,
    && \forall k,l \in \K: l < k, \\
    & \bar \Lambda_{kl} \geq \frac{x^\downarrow_{l} - x^0_{l}}{\eta^\mathrm{d}} - \bar \lambda_k + (\upsilon_{2l} + \upsilon_{3l}) \Delta \eta \, \ubar x, && \forall k,l \in \K: l < k,
\end{align}
\end{subequations}
resulting in the optimization problem
\begin{mini}|s|[2]               
    {}
    {c(\bm x^0, \bm x^\uparrow, \bm x^\downarrow) + \phi(\bm x^0, \bm x^\uparrow, \bm x^\downarrow, y_0)}
    {\tag{$\overline{ \text{P}}$}\label{pb:P:restrict}}
    {}
    \addConstraint{\bm x^0 \in \mathbb{R}^K,~\bm x^\uparrow, \bm x^\downarrow \in \mathbb{R}^K_+
    }{}{}
    \addConstraint{\bm \alpha, \bm \beta \in \mathbb{R}^K, \ubar{\bm{\lambda}}, \bar{\bm \lambda} \in \mathbb{R}^K_+, \ubar{\bm \Lambda}_k \in \mathbb{R}^k_+, \bar{\bm \Lambda}_k \in \mathbb{R}^k, \bm \upsilon \in \{0,1\}^{3(K-1)},
    }{}{\quad \forall k \in \K
    }
    \addConstraint{
    \eqref{reformulation:xlower}, 
    \eqref{reformulation:xupper},
    \eqref{reformulation:soc_lb},
    \text{\eqref{reformulation:soc_ub:soc}--\eqref{reformulation:soc_ub:Lbd4}},
    \eqref{reformulation:restriction}.
    }{}{}
\end{mini}
\begin{Prop}\label{prop:pres}
    Problem~\eqref{pb:P:restrict} is a restriction of problem~\eqref{pb:P}.
\end{Prop}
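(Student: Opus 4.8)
Calling problem~\eqref{pb:P:restrict} a \emph{restriction} of problem~\eqref{pb:P} means that every point feasible for~\eqref{pb:P:restrict}, after the auxiliary binaries $\bm\upsilon_3$ are discarded, is feasible for~\eqref{pb:P} with the same objective value; equivalently, the projection of the feasible set of~\eqref{pb:P:restrict} onto the variables of~\eqref{pb:P} lies in the feasible set of~\eqref{pb:P}. Since both problems minimize $c(\bm x^0,\bm x^\uparrow,\bm x^\downarrow)+\phi(\bm x^0,\bm x^\uparrow,\bm x^\downarrow,y_0)$, which involves none of the dual-type variables nor $\bm\upsilon_3$, establishing this inclusion suffices. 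The two problems share the domain declarations and the constraints~\eqref{reformulation:xlower},~\eqref{reformulation:xupper},~\eqref{reformulation:soc_lb} and~\eqref{reformulation:soc_ub:soc}--\eqref{reformulation:soc_ub:Lbd4}; problem~\eqref{pb:P:restrict} imposes in addition~\eqref{reformulation:restriction}, whereas problem~\eqref{pb:P} imposes the bilinear family~\eqref{reformulation:soc_ub:bilinear}. The plan therefore collapses to one implication: the shared constraints together with~\eqref{reformulation:restriction} force~\eqref{reformulation:soc_ub:bilinear} for every $k,l\in\K$ with $l<k$. Given this, keeping all shared variables and dropping $\bm\upsilon_3$ turns any solution of~\eqref{pb:P:restrict} into one of~\eqref{pb:P}.

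To prove the implication I would use the $\varphi$--epigraph picture behind Proposition~\ref{prop:soc_ub}. In that proof, \eqref{reformulation:soc_ub:Lbd1}--\eqref{reformulation:soc_ub:bilinear}, read together with the indicator definitions~\eqref{reformulation:soc_ub:upsilon1}--\eqref{reformulation:soc_ub:upsilon2}, is exactly a mixed-binary epigraph description of $\bar\Lambda_{kl}\ge\varphi(x^0_l,x^\downarrow_l,\bar\lambda_k)$, with the lone bilinear inequality~\eqref{reformulation:soc_ub:bilinear} carrying the nonconvex ``saddle'' branch $-\bar\lambda_k x^0_l/x^\downarrow_l$ of $\varphi$ (Figure~\ref{fig:varphi_true}). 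The corresponding claim here is that \eqref{reformulation:soc_ub:Lbd1}--\eqref{reformulation:soc_ub:Lbd4} together with the four inequalities~\eqref{reformulation:restriction} and the new binary $\upsilon_{3l}$ form an exact epigraph description of $\bar\Lambda_{kl}\ge\bar\varphi(x^0_l,x^\downarrow_l,\bar\lambda_k)$ for an explicit piecewise-affine \emph{over-estimate} $\bar\varphi\ge\varphi$ (Figure~\ref{fig:varphi_ub}): the big-$M$ pair in~\eqref{reformulation:restriction} lets $\upsilon_{3l}$ record the sign of the quantity $x^\downarrow_l-(1-\eta^\mathrm{c}\eta^\mathrm{d})x^0_l-\eta^\mathrm{d}\bar\lambda_l$, \ie, which of two affine surfaces dominates over the saddle region, while \eqref{reformulation:soc_ub:Lbd1}--\eqref{reformulation:soc_ub:Lbd4} cover the remaining branches. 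Once this is in place, any point feasible for~\eqref{pb:P:restrict} satisfies $\bar\Lambda_{kl}\ge\bar\varphi\ge\varphi$, hence the whole $\varphi$--epigraph system~\eqref{reformulation:soc_ub}, in particular~\eqref{reformulation:soc_ub:bilinear}. A more elementary route avoids naming $\bar\varphi$: fix $k,l$ with $l<k$, branch on the value of $\upsilon_{3l}$ and on the sign patterns of $x^0_l$ and $x^0_l-x^\downarrow_l$ recorded by $\upsilon_{2l},\upsilon_{1l}$ via~\eqref{reformulation:soc_ub:upsilon1}--\eqref{reformulation:soc_ub:upsilon2}, pick in each branch the governing lower bound on $\bar\Lambda_{kl}$ among~\eqref{reformulation:soc_ub:Lbd1}--\eqref{reformulation:soc_ub:Lbd4} and~\eqref{reformulation:restriction}, multiply it by $x^\downarrow_l\ge0$, add $\bar\lambda_k x^0_l$, and compare with the right-hand side of~\eqref{reformulation:soc_ub:bilinear} using the box bounds $\ubar x\le x^0_l-x^\downarrow_l$, $\upsilon_{2l}\ubar x\le x^0_l\le(1-\upsilon_{2l})\bar x$ and $0\le\bar\lambda_k\le(\bar x-\ubar x)/\eta^\mathrm{d}$.

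I expect this last comparison to be the crux: it is the one place where the exact constants must be matched. The term $-\bar x^2/(4\eta^\mathrm{d})$ on the right of~\eqref{reformulation:soc_ub:bilinear} is $-\tfrac1{\eta^\mathrm{d}}\max_{t}t(\bar x-t)$, the extremal value of a product of two quantities each confined to $[0,\bar x]$, with $t$ in the role of $x^0_l$; $\ubar x(\bar x-\ubar x)/\eta^\mathrm{d}$ is the analogous extremal product when $x^0_l\le0$; and the big-$M$ coefficients $(2-\eta^\mathrm{c}\eta^\mathrm{d})\bar x-\ubar x$, $\eta^\mathrm{c}\eta^\mathrm{d}\bar x-\ubar x$ and the $\Delta\eta\,\bar x$, $\Delta\eta\,\ubar x$ slacks in~\eqref{reformulation:restriction} are sized precisely so that every branch's resulting quadratic inequality is valid. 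Two further points of care: the degenerate cases $x^\downarrow_l=0$, $x^0_l=0$ and ties in the definitions of $\upsilon_{1l},\upsilon_{2l},\upsilon_{3l}$ must each be checked, because a restriction may contain \emph{no} point outside~\eqref{pb:P}; and $\upsilon_{3l}$ is introduced only once per period $l$ yet must be compatible with~\eqref{reformulation:soc_ub:bilinear} for every evaluation period $k>l$, so the $\Delta\eta$ slacks also have to absorb the dependence on $\bar\lambda_k$. None of this requires an idea beyond the bookkeeping already done for Proposition~\ref{prop:soc_ub}, run in reverse: there one checks that the bilinear system is tight for $\varphi$; here one checks that the linear system~\eqref{reformulation:restriction} is tight for a function pointwise no smaller than $\varphi$, so that replacing~\eqref{reformulation:soc_ub:bilinear} by~\eqref{reformulation:restriction} can only shrink the feasible set.
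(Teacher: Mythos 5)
Your proposal is correct and follows essentially the same route as the paper: the paper's proof likewise introduces the piecewise-affine over-estimator $\bar\varphi \ge \varphi$ (which differs from $\varphi$ only on the saddle branch $-\bar\lambda_k x^0_l/x^\downarrow_l$), lets $\upsilon_{3l}$ record the sign of $x^\downarrow_l-(1-\eta^\mathrm{c}\eta^\mathrm{d})x^0_l-\eta^\mathrm{d}\bar\lambda_l$, and sizes the big-$M$ constants by the same Table~\ref{tab:M}-style bookkeeping as in the proof of Proposition~\ref{prop:soc_ub}. The only difference is the closing step—the paper concludes that the restriction overestimates the maximum SOC and invokes Proposition~\ref{prop:soc_ub} to get feasibility for~\eqref{pb:P}, whereas you would derive~\eqref{reformulation:soc_ub:bilinear} verbatim from the shared constraints plus~\eqref{reformulation:restriction}; your variant also goes through, because on $0\le x^0_l\le x^\downarrow_l$ each two-term branch maximum dominates the saddle term for every $\bar\lambda_k\ge 0$, which simultaneously disposes of the $\bar\lambda_l$-versus-$\bar\lambda_k$ mismatch you rightly flag.
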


The bilinear constraints~\eqref{reformulation:soc_ub:bilinear} stem from the upper bound on the SOC. The mixed-binary linear relaxation and restriction  under- and overestimate the true maximum SOC, respectively, see Figures~\ref{fig:varphi_lb} and~\ref{fig:varphi_ub}. The region of the feasible set in which the bilinear constraints are binding in the original problem is generated by the constraint
$
    \bar \Lambda_{kl} \geq \max\{ 
    \eta^\mathrm{c} (x^\downarrow_{l} - x^0_{l}) - \bar \lambda_k, \,
    -\frac{x^0_{l}}{\eta^\mathrm{d}}
    \}
$
in the relaxation and by the constraint
$
     \bar \Lambda_{kl} \geq \min\{ 
     - \eta^\mathrm{c} x^0_l, \,
     \frac{x^\downarrow_{l} - x^0_{l}}{\eta^\mathrm{d}} - \bar \lambda_k
     \}
$
in the restriction. 

The problems~\eqref{pb:P:relax} and~\eqref{pb:P:restrict} can be seen as lower convex and upper concave McCormick envelopes of the original bilinear region. Solving these problems constitutes the first step in a spatial branch-and-bound approach~\citep{costa2012relaxations}. Although commercially available optimization solvers already implement spatial branch-and-bound, it is still useful to state the reformulations explicitly. We will see in Section~\ref{sec:app} that the gap between the relaxation and restriction is negligible in our case study. For this application, there is thus no need to solve the exact bilinear problem. Instead, we will always solve the restriction. We now characterize the gap between the maximum SOC estimated in the restriction and the relaxation.

\begin{Prop}\label{prop:bound}
The difference between the maximum SOC estimated by the restriction and the relaxation is no greater than $ (T - \Delta t)\Delta \eta \cdot \min\{ -\ubar x, \, \bar x, \,(\bar x - \ubar x)/(1 + \eta^\mathrm{c} \eta^\mathrm{d}) \}$.
\end{Prop}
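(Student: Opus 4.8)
The plan is to bound, period by period, the discrepancy between the value function $\varphi$ used in the restriction and the one used in the relaxation, and then aggregate these local bounds through the one-dimensional outer problem $\min_{0\le\bar\lambda_k\le\bar{\bar\lambda}}\,\gamma\bar\lambda_k + \Delta t\sum_{l<k}\varphi(\cdot) + \Delta t[\varphi(x^0_k,x^\downarrow_k,\bar\lambda_k)]^+$ that computes the maximum SOC over $t\in\T_k$. Write $\varphi^{\mathrm{lb}}$ for the pointwise maximum of the two affine pieces generating the relaxation, namely $\max\{\eta^\mathrm{c}(x^\downarrow_l-x^0_l)-\bar\lambda_k,\,-x^0_l/\eta^\mathrm{d}\}$, and $\varphi^{\mathrm{ub}}$ for the pointwise minimum generating the restriction, namely $\min\{-\eta^\mathrm{c}x^0_l,\,(x^\downarrow_l-x^0_l)/\eta^\mathrm{d}-\bar\lambda_k\}$. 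Since for any fixed $\bar\lambda_k$ the restriction's SOC is the outer minimum of an expression built from $\varphi^{\mathrm{ub}}$ and the relaxation's from $\varphi^{\mathrm{lb}}$, and the outer problem is monotone in each $\varphi$-term, it suffices to bound $\varphi^{\mathrm{ub}}(x^0_l,x^\downarrow_l,\bar\lambda_k) - \varphi^{\mathrm{lb}}(x^0_l,x^\downarrow_l,\bar\lambda_k)$ uniformly over the feasible region, multiply by the total weight $\sum_{l=1}^{k}\sigma_l(t)\le T-\Delta t$ attached to interior periods (the weight on the final, $k$-th period is handled the same way, the total being at most $T$, but the sharper $T-\Delta t$ comes from noting the bilinear constraints~\eqref{reformulation:soc_ub:bilinear} only appear for $l<k$, so the $k$-th period contributes no gap), and conclude.

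The core estimate is therefore the scalar inequality
\[
\min\{-\eta^\mathrm{c}x^0,\,(x^\downarrow-x^0)/\eta^\mathrm{d}-\bar\lambda\} \;-\; \max\{\eta^\mathrm{c}(x^\downarrow-x^0)-\bar\lambda,\,-x^0/\eta^\mathrm{d}\}\;\le\; \Delta\eta\cdot\min\{-\ubar x,\bar x,(\bar x-\ubar x)/(1+\eta^\mathrm{c}\eta^\mathrm{d})\},
\]
valid whenever $x^0\in[\ubar x,\bar x]$, $x^\downarrow\in[0,\bar x]$ and $x^0-x^\downarrow\in[\ubar x,\bar x]$ (the power-output feasibility constraints~\eqref{reformulation:xlower}--\eqref{reformulation:xupper}). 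I would verify this by a short case analysis on which of the two terms attains the min on the left and which attains the max, giving at most four affine-in-$(x^0,x^\downarrow,\bar\lambda)$ upper bounds; in each case the $\bar\lambda$-dependence is of the form $(\text{something})-\bar\lambda$ versus $(\text{something})-\bar\lambda$ or drops out, so the difference is affine in $(x^0,x^\downarrow)$ alone and is maximized at a vertex of the feasibility polytope in the $(x^0,x^\downarrow)$-plane. Evaluating at the vertices $x^0\in\{\ubar x, x^\downarrow, x^\downarrow+\bar x\}\cap[\ubar x,\bar x]$ etc.\ produces the three candidate quantities: $-\Delta\eta\,\ubar x$ (from a pure-charging vertex where $x^0=\ubar x$, $x^\downarrow=0$), $\Delta\eta\,\bar x$ (from a pure-discharging vertex), and $\Delta\eta(\bar x-\ubar x)/(1+\eta^\mathrm{c}\eta^\mathrm{d})$ (from the interior saddle region $x^\downarrow\in[\eta^\mathrm{d}\bar\lambda,\bar\lambda/\eta^\mathrm{c}]$, $x^0\in[0,x^\downarrow)$ identified in the text just above the statement, where the two relaxation pieces cross and the two restriction pieces cross); taking the minimum over these three is legitimate because for a \emph{given} configuration only the relevant vertex is active, while the stated bound must hold for \emph{all} configurations, hence the $\min$.

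Finally I would assemble: for each fixed $\bar\lambda_k$, $y^{\mathrm{restrict}}_{\max}(t) - y^{\mathrm{relax}}_{\max}(t) \le \Delta t\sum_{l=1}^{k-1}\big(\varphi^{\mathrm{ub}}_l-\varphi^{\mathrm{lb}}_l\big) \le (k-1)\Delta t\cdot\Delta\eta\min\{-\ubar x,\bar x,(\bar x-\ubar x)/(1+\eta^\mathrm{c}\eta^\mathrm{d})\} \le (T-\Delta t)\,\Delta\eta\min\{\cdots\}$, using $(k-1)\Delta t\le T-\Delta t$ and the fact that taking the outer $\min_{\bar\lambda_k}$ on both sides of $\varphi^{\mathrm{ub}}\le\varphi^{\mathrm{lb}}+\text{const}$ preserves the inequality (the common $\gamma\bar\lambda_k$ term and the structure of the min make the two optimal $\bar\lambda_k$'s comparable). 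I expect the main obstacle to be the bookkeeping in the case analysis of the scalar inequality — in particular, confirming that the saddle-region crossing genuinely yields the $(\bar x-\ubar x)/(1+\eta^\mathrm{c}\eta^\mathrm{d})$ term rather than something larger, and checking that the $[\,\cdot\,]^+$ on the $k$-th term in the outer problem does not introduce an extra gap (it does not, because both restriction and relaxation apply the same $[\,\cdot\,]^+$ and the $k$-th $\varphi$'s coincide since no bilinear constraint is imposed there). Everything else is monotone aggregation.
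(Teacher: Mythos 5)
Your overall architecture matches the paper's: reduce the SOC gap to a per-period gap between the restriction's $\varphi^{\mathrm{ub}}$ and the relaxation's $\varphi^{\mathrm{lb}}$, note that the $k$-th (partial) period contributes nothing because no bilinear constraint is imposed there so both formulations use the same exact $[\varphi]^+$, fix a common $\bar\lambda_k$ to pass the bound through the outer minimization, and multiply by $(k-1)\Delta t \le T-\Delta t$. All of that is sound and is exactly what the paper does.

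The gap is in the core scalar estimate, which you leave unproved and justify with reasoning that does not hold up. First, the difference $\varphi^{\mathrm{ub}}-\varphi^{\mathrm{lb}} = \min\{-\eta^{\mathrm{c}}x^0,\,(x^\downarrow-x^0)/\eta^{\mathrm{d}}-\bar\lambda\}-\max\{\eta^{\mathrm{c}}(x^\downarrow-x^0)-\bar\lambda,\,-x^0/\eta^{\mathrm{d}}\}$ is a minimum of four affine functions, hence \emph{concave} piecewise linear in $(x^0,x^\downarrow,\bar\lambda)$ — not "affine in $(x^0,x^\downarrow)$ alone" — so its maximum over the feasibility polytope is attained where the pieces balance, in the interior, not at a vertex. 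Your vertex evaluations are also off: at the "pure-charging vertex" $x^0=\ubar x$, $x^\downarrow=0$ the two envelopes coincide (the gap is nonpositive there; it is nonzero only on $0\le x^0\le x^\downarrow$ with $\eta^{\mathrm{c}}x^\downarrow\le\bar\lambda\le x^\downarrow/\eta^{\mathrm{d}}$), so none of the three quantities $-\Delta\eta\,\ubar x$, $\Delta\eta\,\bar x$, $\Delta\eta(\bar x-\ubar x)/(1+\eta^{\mathrm{c}}\eta^{\mathrm{d}})$ arises as a vertex value. Second, and more seriously, your rule "take the minimum over the three candidates because for a given configuration only the relevant vertex is active" is not a valid inference — a uniform bound over all configurations would require the \emph{maximum} of the candidate values, which would break the claim. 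The actual reason a minimum appears is different: on the active region the gap equals $\min\{\Delta\eta\,x^0,\ \bar\lambda-\eta^{\mathrm{c}}x^\downarrow,\ x^\downarrow/\eta^{\mathrm{d}}-\bar\lambda,\ \Delta\eta\,(x^\downarrow-x^0)\}$, whose maximum is attained by equalizing the arguments ($x^\downarrow=2x^0$, $\bar\lambda=(\eta^{\mathrm{c}}+1/\eta^{\mathrm{d}})x^0$), reducing the problem to maximizing the single linear objective $\Delta\eta\,x^0$ subject to the three upper bounds $x^0\le\bar x$ (from \eqref{reformulation:xlower}), $x^0\le-\ubar x$ (from $x^0-x^\downarrow\ge\ubar x$ with $x^\downarrow=2x^0$), and $x^0\le(\bar x-\ubar x)/(1+\eta^{\mathrm{c}}\eta^{\mathrm{d}})$ (from $\bar\lambda\le(\bar x-\ubar x)/\eta^{\mathrm{d}}$). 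The minimum in the statement is the binding one of these three bounds, not a choice among per-vertex estimates. Without this computation the proposal establishes the aggregation scaffolding but not the bound itself.
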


As expected, the gap decreases with roundtrip efficiency and vanishes in the absence of losses.

\begin{Rmk}
    As the relaxation and restriction are mixed-binary linear programs, they may admit multiple different optimal solutions, which means that additional objectives may be achieved without reducing profits. For example, battery degradation may be reduced by minimizing the $\infty$-norm of the power output or of SOC deviations~\citep{thompson2018economic}. \hfill $\Box$
\end{Rmk}
\section{Multimarket Arbitrage} \label{sec:intraday}
So far, we have assumed that storage operators participate in a single market for arbitrage and cannot adjust their arbitrage position in response to the regulation signal. In practice, however, they may participate in multiple markets covering different timescales for arbitrage, \eg, day-ahead and real-time (in the US) or intraday (in Europe) markets. Multimarket arbitrage allows for adjustments to positions on the market closest to delivery. These adjustments may compensate SOC deviations induced by up- and downregulation. Such compensation causes an additional power flow, which tightens the bounds on charging and discharging power but relaxes the bounds on the SOC as operators may consider a smaller deviation budget~$\gamma$ when deciding on regulation power.

In the following, we focus on operators participating in day-ahead and intraday markets and examine the impact of intraday adjustments. We assume that the intraday adjustment $x^\text{a}_k$ for trading interval~$k$ is decided on at the beginning of the interval. We restrict admissible regulation signals to the exact uncertainty set specified by the \cite{EU17} for FCR,
\begin{equation}
    \Xi' = \left\{ \xi \in \set{R}(\set{T}, [-1,1]): \int_{[t - \Gamma']}^t \vert \xi(\tau) \vert \, \mathrm{d}\tau \leq \gamma' ~~ \forall t \in \T \right\},
\end{equation}
where $\gamma'$ is a parameter between $15$min and $30$min and $\Gamma'$ equals $\gamma'$ plus two hours. The activation ratio $\gamma' / \Gamma'$ can be interpreted as the ratio of time for which storage operators must be able to deliver all the regulation power they promise \citep{my_v2g_lp}. Similar to Assumption~\ref{ass:div}, we consider that $\gamma'$ is a multiple of $\Delta t$. When abstaining from intraday trading, we set
\begin{equation}\label{eq:gamma}
    \gamma = \gamma' \floor*{ \frac{T}{\Gamma'} } + \min\left\{
    \gamma', \, T - \Gamma' \floor*{\frac{T}{\Gamma'}}
    \right\} 
\end{equation}
to ensure that $\Xi' \subseteq \Xi$. Problem~\eqref{pb:R} is thus a restriction of the real decision problem faced by  storage operators. When participating in intraday trading, the following proposition shows that operators can reduce $\gamma$ to $\gamma'$ and recover feasible decisions for the case $\eta^\text{c} = \eta^\text{d} = 1$ with symmetric regulation bids $x^\text{r}_k \coloneq x^\downarrow_k = x^\uparrow_k$ for all $k \in \K$.

\begin{Prop}\label{prop:intraday}
    With $\gamma = \gamma'$, problem~\eqref{pb:P} yields feasible regulation bids under the exact EU uncertainty set if the bounds on charging and discharging power are replaced by 
    \begin{subequations}
    \begin{align}
        x^0_{k+1} - x^\mathrm{r}_{k+1} -  \frac{\gamma'}{\Delta t} \lambda_k - \sum_{i = \ubar i(k+1)}^k \left[ \frac{\Delta t}{\Gamma' - \Delta t} x^\mathrm{r}_i - \lambda_k \right]^+
        & \geq \ubar x, ~~ \forall k \in \K \setminus \{K\}, \\
        x^0_{k+1} + x^\mathrm{r}_{k+1} +  \frac{\gamma'}{\Delta t} \lambda_k + \sum_{i = \ubar i(k+1)}^k \left[ \frac{\Delta t}{\Gamma' - \Delta t} x^\mathrm{r}_i - \lambda_k \right]^+ & \leq \bar x, ~~  \forall k \in \K \setminus \{K\},
    \end{align}
    \end{subequations}
    where $\ubar i(k) = \max\{1, \, k - \frac{\Gamma'}{\Delta t} +1\}$ and $\lambda \in \mathbb{R}^K_+$ is an auxiliary decision variable, and the intraday adjustments are set to
    \begin{equation}
        x^\mathrm{a}_{k+1} = - \frac{1}{\Gamma' - \Delta t} \sum_{i = \ubar i(k+1)}^k x^\mathrm{r}_{i} \int_{(i-1)\Delta t}^{i \Delta t} \xi(\tau) \, \mathrm{d}\tau, ~~ \forall k \in \K \setminus \{K\}.
    \end{equation}
\end{Prop}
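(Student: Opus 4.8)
The plan is to fix the lossless, symmetric-bid setting $\eta^\mathrm{c} = \eta^\mathrm{d} = 1$, $x^\uparrow_k = x^\downarrow_k = x^\mathrm{r}_k$, take any $(\bm x^0, \bm x^\mathrm{r}, \bm \lambda, \ldots)$ feasible for \eqref{pb:P} with $\gamma = \gamma'$ and the modified power bounds, and show that the operating policy realizing output $x^0_{k+1} + x^\mathrm{a}_{k+1} + \xi(\tau) x^\mathrm{r}_{k+1}$ on $\T_{k+1}$, with $x^\mathrm{a}$ as in the statement, keeps both power and SOC within their limits for every $\xi \in \Xi'$. Writing $N := \Gamma'/\Delta t$, a positive integer since $\gamma'$ and two hours are multiples of $\Delta t$, I would first use \eqref{eq:x}--\eqref{eq:y} to reduce the realized SOC at time $t$ to $y^\mathrm{a}(\xi, t) := y_0 - \int_0^t ( x^0(\tau) + x^\mathrm{a}(\tau) + \xi(\tau) x^\mathrm{r}(\tau) )\,\mathrm{d}\tau$, so that it remains to check $y^\mathrm{a}(\xi, t) \in [\ubar y, \bar y]$ and that the realized output lies in $[\ubar x, \bar x]$, for all $(\xi, t) \in \Xi' \times \T$.

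For the SOC bound, the key idea is a signal-translation argument. Fixing $\xi \in \Xi'$ and $t \in \T_{k+1}$ (interval $\T_1$ carries no adjustment and is the trivial case), I would substitute the definition of $x^\mathrm{a}$ into $\int_0^t ( x^\mathrm{a}(\tau) + \xi(\tau) x^\mathrm{r}(\tau) )\,\mathrm{d}\tau$ and, with $\Xi_i := \int_{(i-1)\Delta t}^{i\Delta t}\xi(\tau)\,\mathrm{d}\tau$, collect the contribution of each past interval: a short count of how often interval $i$ occurs in the nested sums defining the adjustments shows that this integral equals $\sum_{i=1}^{k} w_i(t)\,x^\mathrm{r}_i \Xi_i + x^\mathrm{r}_{k+1}\int_{k\Delta t}^{t}\xi(\tau)\,\mathrm{d}\tau$, with $w_i(t) = 1 - \tfrac{1}{N-1}\min\{k-i,\,N-1\}$, lowered by a further $\tfrac{t-k\Delta t}{(N-1)\Delta t}$ when $i \geq \ubar i(k+1)$. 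I would then check $w_i(t) \in [0,1]$ and, crucially, $w_i(t) = 0$ once $i \leq k - N + 1$, so that only intervals inside the window $[t-\Gamma', t]$ receive positive weight. Defining $\tilde\xi$ to equal $w_i(t)\xi$ on $\T_i$ for $i \leq k$, to equal $\xi$ on $[k\Delta t, t]$, and to vanish afterward, the bound $w_i(t) \leq 1$ gives $\tilde\xi \in \set{R}(\T, [-1,1])$ and $\int_\T |\tilde\xi| \leq \int_{[t-\Gamma']}^{t}|\xi| \leq \gamma'$, so $\tilde\xi$ lies in the uncertainty set $\Xi$ for budget $\gamma'$. By construction $y^\mathrm{a}(\xi, t) = y(x^0, x^\mathrm{r}, x^\mathrm{r}, \tilde\xi, y_0, t)$, and since $(\bm x^0, \bm x^\mathrm{r})$ satisfies the SOC reformulations of \eqref{pb:P} with $\gamma = \gamma'$, Propositions~\ref{prop:soc_lb} and~\ref{prop:soc_ub} give $y^\mathrm{a}(\xi, t) \in [\ubar y, \bar y]$.

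For the power bound, interval $\T_1$ carries no adjustment and is covered by the unmodified bounds for $k=1$. On $\T_{k+1}$ the realized output lies in $[\,x^0_{k+1} - x^\mathrm{r}_{k+1} - M_{k+1},\ x^0_{k+1} + x^\mathrm{r}_{k+1} + M_{k+1}\,]$ with $M_{k+1} := \max_{\xi \in \Xi'}|x^\mathrm{a}_{k+1}|$; since the intervals $\ubar i(k+1), \ldots, k$ span a window of length $\Gamma' - \Delta t < \Gamma'$ ending at $k\Delta t$, membership in $\Xi'$ forces $\sum_{i=\ubar i(k+1)}^{k}|\Xi_i| \leq \gamma'$, whence $M_{k+1}$ equals $\tfrac{1}{\Gamma'-\Delta t}$ times the optimal value of the linear program $\max\{\sum_i x^\mathrm{r}_i z_i : 0 \leq z_i \leq \Delta t,\ \sum_i z_i \leq \gamma'\}$. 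Taking the dual of this program and substituting $\lambda_k = \tfrac{\Delta t}{\Gamma'-\Delta t}\mu$ rewrites $M_{k+1}$ as $\min_{\lambda_k \geq 0}\bigl(\tfrac{\gamma'}{\Delta t}\lambda_k + \sum_{i=\ubar i(k+1)}^{k}[\tfrac{\Delta t}{\Gamma'-\Delta t}x^\mathrm{r}_i - \lambda_k]^+\bigr)$. The modified power constraints state exactly that some $\lambda_k \geq 0$ keeps both $x^0_{k+1} - x^\mathrm{r}_{k+1}$ minus, and $x^0_{k+1} + x^\mathrm{r}_{k+1}$ plus, the bracketed expression inside $[\ubar x, \bar x]$; since that expression is at least $M_{k+1}$, the realized output stays inside $[\ubar x, \bar x]$.

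I expect the main obstacle to be the bookkeeping behind the SOC step: deriving the closed form of the weights $w_i(t)$ from the nested adjustment sums, verifying $0 \leq w_i(t) \leq 1$, and above all showing that $w_i(t)$ vanishes once interval $i$ is more than $\Gamma'/\Delta t - 1$ periods old, which is precisely what lets the translated signal $\tilde\xi$ fit into a single $\Gamma'$-window and hence belong to $\Xi$ with the reduced budget $\gamma'$. The start-of-horizon regime, where $\ubar i(\cdot) = 1$ and the window is truncated at time $0$, must be handled separately, but it only shrinks the relevant $|\xi|$-mass and causes no real difficulty.
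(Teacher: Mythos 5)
Your proposal is correct and follows essentially the same route as the paper: track how the intraday adjustments reweight each past interval's signal mass (the weights $1-\tfrac{k-i}{N-1}$ supported on a $\Gamma'$-window), conclude that the adjusted SOC deviation is one that a budget-$\gamma'$ signal in $\Xi$ could produce so that feasibility of~\eqref{pb:P} with $\gamma=\gamma'$ covers it, and handle the power bounds by dualizing the linear program for $\max_{\xi\in\Xi'}\vert x^\mathrm{a}_{k+1}\vert$, exactly as in the paper. Your version is, if anything, slightly more explicit than the paper's (you construct the translated signal $\tilde\xi$ and verify the SOC bound at intra-interval times $t\in\T_{k+1}$, whereas the paper argues via a magnitude bound at interval endpoints), but the decomposition and key steps coincide.
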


\begin{Rmk}
    For $\gamma' = \Delta t$, the tightened bounds simplify to
    \begin{subequations}
    \begin{align}
        x^0_{k+1} - x^\mathrm{r}_{k+1} - \max_{i \in \{ \ubar i(k+1), \ldots, k \}} \frac{\Delta t}{\Gamma' - \Delta t} x^\mathrm{r}_i
        \geq \ubar x, ~~ \forall k \in \K \setminus \{K\}, \\
        x^0_{k+1} + x^\mathrm{r}_{k+1} + \max_{i \in \{ \ubar i(k+1), \ldots, k \}} \frac{\Delta t}{\Gamma' - \Delta t} x^\mathrm{r}_i \leq \bar x, ~~  \forall k \in \K \setminus \{K\}. 
    \end{align}
    \end{subequations}
\end{Rmk}

Example~\ref{ex:intraday} illustrates the relationship between the tightened constraints on charging and discharging power and the activation ratio.

\begin{Ex}[Role of the activation ratio]\label{ex:intraday}
    If $x^\text{r}_k = x^\text{r}_1$ for all $k \in \K$, then for all $k \in  \K \setminus \{K\}$,
    \begin{subequations}
    \begin{align}
         x^\mathrm{a}_{k+1}
        & = - \frac{x^\text{r}_1}{\Gamma'- \Delta t} \int_{(\ubar i(k+1) - 1)\Delta t}^{k\Delta t} \xi(\tau) \, \mathrm{d}\tau \\
        \implies 
        \vert x^\mathrm{a}_{k+1} \vert 
        & =
        \frac{x^\text{r}_1}{\Gamma' - \Delta t} \left\vert \int_{(\ubar i(k+1) - 1)\Delta t}^{k\Delta t} \xi(\tau) \, \mathrm{d}\tau \right\vert \\
        & \leq
        \frac{x^\text{r}_1}{\Gamma' - \Delta t}  \int_{(\ubar i(k+1) - 1)\Delta t}^{k\Delta t} \left\vert \xi(\tau) \right\vert \, \mathrm{d}\tau \\
        & \leq 
        \frac{\gamma'}{\Gamma' - \Delta t} x^\text{r}_1.
    \end{align}
    \end{subequations}
    For $\gamma' = \Delta t$, the constraints on charging and discharging power are thus tightened by a fraction~$\frac{\gamma'}{\Gamma' - \gamma'}$ of the regulation bid. In our case study, we will set~$\gamma' = 15$min, so intraday trading tightens the power constraints~\eqref{reformulation:xlower} and~\eqref{reformulation:xupper} by~$\frac{\gamma'}{\Gamma' - \gamma'} = \frac{1}{8}$ times the regulation bids, while reducing the deviation time budget $11$-fold from $\gamma = 2.75$h to $\gamma' = 15$min, which relaxes the SOC constraints~\eqref{reformulation:soc_lb:soc} and~\eqref{reformulation:soc_ub:soc}. It thus seems likely that intraday trading enables higher FCR bids, as has been empirically observed by~\cite{zhang2025joint}. \hfill $\Box$ 
\end{Ex}
\section{Applications}\label{sec:app}
\subsection{Market Setup}
We consider a battery storage operator located in France who participates on the European day-ahead market for electricity and on the European FCR market. The day-ahead market is operated by \href{https://www.europex.org/members/epex-spot/}{EPEX SPOT} and spans 13~European countries. The FCR market is operated by the \href{https://www.regelleistung.net/en-us/European-cooperations/FCR-Cooperation}{FCR Cooperation} and spans Austria, Belgium, the Czech Republic, Denmark, France, Germany, the Netherlands, Slovenia, and Switzerland. The regulation signal~$\xi(t)$ is given by a clipped ramp function of the grid frequency~$f(t)$ at time~$t$,
\begin{equation}
    \xi(t) = \begin{cases}
        +1 & \mathrm{if } f_0 - f(t) \geq \Delta f, \\
        -1 & \mathrm{if } f(t) - f_0 \geq \Delta f, \\
        \frac{f_0 - f(t)}{\Delta t} & \mathrm{otherwise},
    \end{cases}
\end{equation}
where $f_0 = 50$Hz is the nominal frequency and $\Delta f = 200$mHz is a threshold beyond which all promised regulation power must be delivered. The \citet[Art.~154]{EU17} requires that the frequency be measured at least every $10$s to compute the regulation signal. Participating on these markets requires ($i$) symmetric bids for up- and downregulation, \ie, $x^\mathrm{r}(t) \coloneq x^\uparrow(t) = x^\downarrow(t)$ for all $t \in \T$, ($ii$) regulation power bids in 4-hour blocks, \ie, $x^\mathrm{r}(t) = x^\mathrm{r}(\floor{t/4\text{h}} \cdot 4\text{h})$ for all $t \in \T$, and ($iii$) day-ahead power bids in 1-hour blocks, \ie, $x^0(t) = x^0(\floor{t/1\text{h}} \cdot 1\text{h})$ for all $t \in \T$.

The cost incurred over a planning horizon is given by the negative of arbitrage and FCR profits
\begin{equation}
    c'\big(x^0, x^r, \xi \big) = -\int_\T p^0(t) x^0(t) + \big(p^\mathrm{a}(t) + \xi(t) p^0(t) \big) x^\mathrm{r}(t) \, \mathrm{d}t,
\end{equation}
where $p^0(t)$ is the day-ahead price at time~$t$ and $p^\mathrm{a}(t)$ is the availability price for FCR at time~$t$. The availability payment $p^\mathrm{a}(t) x^\text{r}(t)$ is independent of the regulation signal. The regulation power~$\xi(t) x^\mathrm{r}(t)$ provided at time $t$ is valued at the day-ahead price in France. If $\xi(t)$ is positive, the battery operator provides power for regulation and receives payment. If $\xi(t)$ is negative, the battery operator consumes power for regulation and incurs costs. The expected cost over a planning horizon is
\begin{equation}
    c \big(x^0, x^\mathrm{r}, x^\mathrm{r} \big)
    = - \int_\T p^0(t) x^0(t) + p^\mathrm{a}(t) x^\mathrm{r} \, \mathrm{d}t 
\end{equation}
because the expected average value of the regulation signal over each hour of the planning horizon vanishes. In fact, if there was a systematic bias, grid operators would notice and change their dispatch accordingly. Empirically, from 1 July 2020 through 30 June 2024, the regulation signal deviated the most from zero between 11pm and midnight taking an average value of~$-0.032$. 
 
\subsection{Assumptions}
We make the following simplifying assumptions, which are discussed in Appendix~\ref{apx:ass}.
\begin{enumerate}
    \item \textbf{Prices:} Day-ahead and availability prices are known one day in advance and exogenous. 
    \item \textbf{Market granularity:} There are no volume ticks, \ie, there is no floor, ceiling, or predefined increment for bid volumes.
    \item \textbf{Storage degradation:} Degradation is negligible when the SOC remains between 10\% and 90\% of total capacity.
    \item \textbf{Storage dynamics:} Maximum charging and discharging power, as well as charging and discharging efficiencies, are constant.
    \item \textbf{Terminal condition:} The cost-to-go function is zero if the terminal SOC induced by trading on the day-ahead market is greater or equal to $y_0$, and infinite otherwise. This is enforced via the constraint $y_0 - \Delta t \sum_{k \in \K} \alpha_k \geq y^\star$.
    \item \textbf{Intraday market:} Intraday prices equal day-ahead prices. Bids are submitted over 15min intervals and can be placed immediately before the interval begins. 
    \item \textbf{Availability:} The battery is available for operation on all days.
\end{enumerate}

\subsection{Test Setup}
We evaluate our optimization model on four years of data from 1 July 2020, when FCR bidding blocks transitioned to~4h durations, through 30 June 2024. Days affected by transitions into or out of daylight savings time are excluded. Each day at 8am, the storage operator measures the SOC, solves problem~\eqref{pb:P} for the following day while accounting for frequency deviations from 8am to midnight on the current day, and submits FCR bids. At the end of each day, we record profits, total energy charged and discharged, minimum and maximum SOC, and minimum and maximum power output. At the end of the simulation horizon, we assess the impact of joint FCR and arbitrage participation on profits and energy output. Finally, we compare results for France with those obtained for other European countries.

\subsection{Input Data}
We now characterize the input data for our back test to guide the interpretation of numerical results. Data sources are listed in Appendix~\ref{apx:input_data}.

The European energy market faced two disruptions during our simulation horizon: the COVID-19 pandemic and the 2022 energy crisis. The pandemic reduced economic activity, lowering energy demand and prices. As the economy recovered, both rebounded through 2021~\citep{kuik2022energy}.
In February 2022, the Russian invasion of Ukraine disrupted natural gas supplies, triggering a sharp rise in prices that remained high through early~2023. Since then, markets have stabilized and returned to more typical conditions~\citep{emiliozzi2023european}.

These disruptions affect storage operators by altering arbitrage opportunities. During the pandemic, day-ahead price spreads were relatively narrow, but widened significantly during the energy crisis. The left panel of Figure~\ref{fig:prices} shows this trend through the difference between daily minimum and maximum prices, discounted by charging and discharging efficiencies. It also displays the evolution of average daily FCR prices. Both signals follow similar trends, though FCR prices have stayed at or below pandemic-era levels since summer of~2023 and have neared zero since January~2024. 

The right panel of Figure~\ref{fig:prices} shows average day-ahead and FCR prices over the full four year simulation horizon. Day-ahead prices tend to peak in the morning (7--10am) and in the evening (6--9pm), and reach their lowest level overnight (3--5am) and in the afternoon (1pm--4pm). These patterns enable either a single arbitrage cycle, charging at night and discharging in the evening, or two cycles, charging at night and in the afternoon, and discharging in the morning and evening. In contrast, FCR prices are highest between 4--8am and lowest from 8pm to midnight.

\begin{figure}
\centering
\begin{subfigure}{0.60\textwidth}
\centering
\begin{tikzpicture}[scale=0.7, transform shape]
    \begin{axis}[
      width = 14cm,
      height = 6cm,
      xtick = {1, 185, 366, 550, 731, 915, 1096, 1280, 1461},
      xticklabels={Jul 20, Jan 21, Jul 21, Jan 22, Jul 22, Jan 23, Jul 23, Jan 24, Jul 24},
      enlarge x limits=false,
      ymin = 0, ymax = 400,
      ytick distance = 50,
      grid=both,
      legend style={
      font=\scriptsize,
      legend columns = 2,
    at={(0.99,0.97)},
    anchor=north east,
  },
    ]      
    \addplot[opacity=0.5, double, blue] table [x=day, y=spread, col sep=tab] {data/input_spread.txt};
    \addplot[opacity=0.5, double, Bittersweet] table [x=day, y=fcr_4h, col sep=tab] {data/input_spread.txt};
     \legend{Spread ($\nicefrac{\text{\EUR{}}}{\text{MWh}}$), 
     FCR ($\nicefrac{\text{\EUR{}}}{\text{MW}\cdot 4\text{h}}$)
     }
    \end{axis}
\end{tikzpicture}
\end{subfigure}
\begin{subfigure}{0.35\textwidth}
\begin{tikzpicture}[scale=0.7, transform shape]
    \begin{axis}[
      width = 8.5cm,
      height = 6cm,
      enlarge x limits=false,
      ymin = 0, ymax = 200,
      ytick distance = 25,
      xtick = {0, 4, 8, 12, 16, 20, 24},
      xticklabels={0:00, 4:00, 8:00, 12:00, 16:00, 20:00, 0:00},
      grid=both,
      legend style={
        font=\scriptsize,
        legend columns = 2,
        at={(0.02,0.03)},  
        anchor=south west,
  },
    ]      
    \addplot[const plot, opacity=0.75, double, blue] table [x=hour_starting, y=day_ahead, col sep=tab] {data/input_day_ahead_average.txt};
    \addplot[const plot, opacity=0.75, double, Bittersweet] table [x=hour_starting, y=fcr_4h, col sep=tab] {data/input_fcr_average.txt};
    \legend{
    Day-Ahead ($\nicefrac{\text{\EUR{}}}{\text{MWh}}$),
    FCR ($\nicefrac{\text{\EUR{}}}{\text{MW}\cdot 4\text{h}}$)
    }
    \end{axis}
\end{tikzpicture}
\end{subfigure}
\caption{Price data.}
\label{fig:prices}
\end{figure}
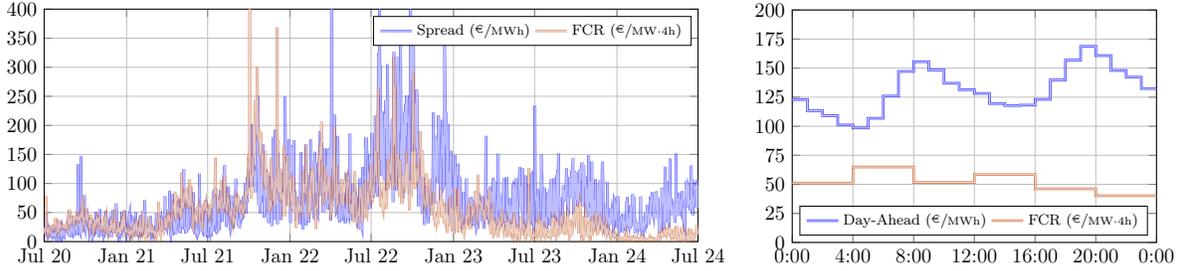

The remaining model parameters relate to the FCR signal and battery specifications. We set $\Delta t = \gamma'=15$min, satisfying Assumption~\ref{ass:div} and yielding $K = 96$ intervals for a 24h planning horizon. The empirical FCR signal uses, on average, 70\% of the deviation time budget, reaching a maximum of~106\% on the most extreme day. For the battery, we assume a storage capacity of~100kWh, and a maximum charging and discharging power of~$50$kW. Charging and discharging efficiencies are set to~$0.92$, consistent with commercially available lithium-ion batteries~\citep{WEC20}. Formally, $\ubar y = 10$kWh, $\bar y = 90$kWh, $\bar x = -\ubar x = 50$kW, and $\eta^+ = \eta^- = 0.92$.

\subsection{Numerical Implementation}
All numerical experiments are conducted on Intel Xeon Platinum 8260 CPUs with 24~cores, 48~threads, 192GB of RAM, and a 2.4GHz base clock speed~\citep{reuther2018interactive}. Simulations are implemented in Julia~1.10.1 using JuMP~1.22.2, with Gurobi~11.0.2 as the solver. All code and data are available at \url{www.github.com/lauinger/robust-storage-bidding}.

\subsection{Numerical Experiments}
We first test our model using data from France, then compare the results with those from other countries participating in the European balancing reserve platform since 1 July 2020: Austria, Belgium, Germany, the Netherlands, and Switzerland.

\subsubsection{The French Case}
Table~\ref{tab:results} in Appendix~\ref{apx:results} summarizes nine experiments, which yield the following results:
\begin{enumerate}
    \item \textbf{The relaxation and restriction are tight.} Experiments~1 and~2 solve the mixed-integer linear relaxation and restriction, respectively, using $ y_0 = \frac{\bar y + \eta^c\cdot\eta^d\cdot\ubar y}{1+\eta^c\cdot\eta^d} = 53.328$kWh as the initial SOC on each day, ensuring symmetric headroom for charging and discharging. Both yield a mean daily objective of $6.990$\EUR. The relaxation violates the bilinear SOC bound in 11.995 out of $\frac{(K-1)K}{2} = 4560$ constraints on average. As both formulations yield identical objectives but the relaxation is occasionally infeasible, we use the restriction in all subsequent experiments.
    \item \textbf{The terminal constraint    
    supports continuous operations.} Experiment~3  initializes each day’s SOC to the previous day's terminal SOC. The mean daily profit is $6.855$\EUR, and the mean SOC at midnight is $52.364$kWh, both close to Experiment~2.
    \item \textbf{Without FCR, profits and solve time decrease, while energy output increases.} Experiment~4 disables FCR. The restriction becomes exact as $\nu_{1k} + \nu_{2k} = 1$ for all $k \in \K$, deactivating the bilinear terms. Compared to Experiment~3, the mean daily profit drops to 5.624\EUR (-$18$\%), energy output rises to $143.045$kWh (+$54$\%), and the mean solve time, as reported by Gurobi, drops to 0.010s ($50,000$x~speedup).
    \item \textbf{Early bidding reduces profits.} In Experiment~5, market bids must be submitted at 8am the day preceding delivery, which reduces profit to $4.961$\EUR (-$12$\% vs. Experiment~4) because the uncertainty on the SOC at midnight effectively tightens the SOC constraints.
    \item \textbf{Intraday trading increases profits, reduces output and solve time, may slightly violate SOC bounds.} Experiment~6 enables intraday trading: mean daily profits increase to 14.987\EUR (12.883\EUR from FCR), energy output drops to 78.484kWh, and solve time falls to 3.538s (150x faster than Experiment~3). SOC ranges from 4.864kWh to 90.182kWh, exceeding operational bounds  (10--90kWh), but within physical limits (0--100kWh). Figure~\ref{fig:btsocecdf} in Appendix~\ref{apx:results} shows that the 10kWh bound is satisfied on 91\% of all test days and the 90kWh bound is violated on one day only. 
    \item \textbf{Intraday trading mitigates early-bidding penalty.} Experiment~7 mirrors Experiment~6 but assumes bids are submitted at midnight. The mean daily profit is 14.985\EUR, only 0.59\% higher, suggesting that intraday flexibility offsets early-bidding penalties.
    \item \textbf{Perfect efficiency raises profits and output.} Experiment~8  assumes lossless charging and discharging. Profits rise to 17.474\EUR (+17\%) and energy output to 126.831kWh (+62\%) relative to Experiment~6. The SOC respects operational bounds as guaranteed by Proposition~\ref{prop:intraday}.
    \item \textbf{Restricting day-ahead trading reduces profits and output.} Experiment~9 limits day-ahead trading to the energy needed to compensate FCR-induced SOC fluctuations, \ie, to available headroom. Profits drop to 13.134\EUR (-$12$\%) and energy output to 37.377kWh (-$52$\%) compared to Experiment~6.
\end{enumerate}

In conclusion, joint participation in arbitrage and FCR increases profits and reduces energy output compared to no FCR participation. The restriction is tight and solvable in 25min. Compensating FCR-induced SOC fluctuations via intraday trading doubles profits, reduces energy output, and reduces solve time to under 5s, but may slightly violate SOC bounds. 

While intraday trading is promising, it may be risky in illiquid markets or if there are large differences between day-ahead and intraday prices. Some of its benefits can be achieved by (\emph{i})~reoptimizing day-ahead bids after FCR bids have been submitted, \eg, based on the information available at noon on the day preceding delivery; or by (\emph{ii})~offering no FCR between noon and midnight, which reduces the uncertainty on the initial SOC. 

The results suggest that it may be attractive to limit arbitrage and accept lower profits in return for a markedly lower energy output. Figure~\ref{fig:bt_market} shows that through 2022, the profit gap from limited arbitrage was small, but widened afterwards. Since January 2024, profits under limited arbitrage have nearly vanished, which is consistent with the decline in FCR prices shown in Figure~\ref{fig:prices}. 

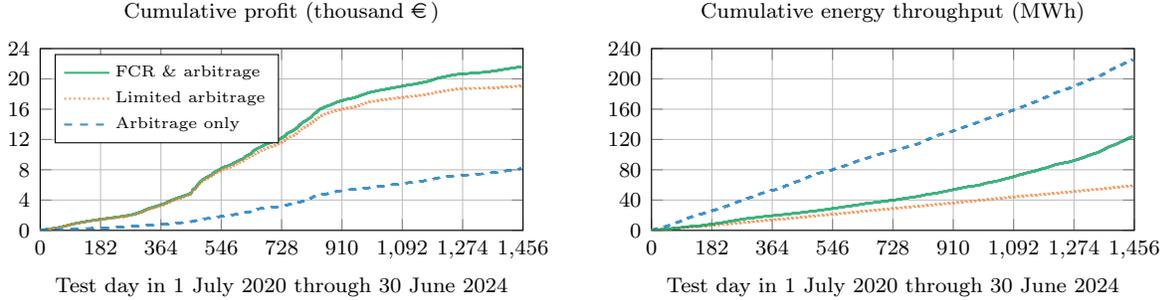
\begin{figure}
\begin{subfigure}{0.48\textwidth}
\centering
\begin{tikzpicture}[font=\scriptsize]
    \begin{axis}[
        height=4cm,
        width=8cm,
        enlarge x limits=false,
        ymin = 0, ymax = 24,
        ytick distance = 4,
        yticklabel style={
        /pgf/number format/fixed,
        /pgf/number format/precision=0
        },
        scaled y ticks=false,
        xmin = 0, xmax = 1456,
        xtick distance=182,
        xlabel={Test day in 1 July 2020 through 30 June 2024},
        ylabel={},
        title={Cumulative profit (thousand \EUR{})},
        grid=both,
        legend style={
        cells={anchor=west},
        legend pos= north west,
        font=\tiny
        }, 
    ]
    \addplot+[no marks, const plot, opacity=.75, solid, thick, ForestGreen] table [x=testday, y=profit_e6, col sep=tab] {data/bt_market.txt};
    \addplot+[no marks, const plot, opacity=.75, densely dotted, thick, Orange] table [x=testday, y=profit_e9, col sep=tab] {data/bt_market.txt};
    \addplot+[no marks, const plot, opacity=.75, dashed, thick, NavyBlue] table [x=testday, y=profit_e4, col sep=tab] {data/bt_market.txt};
    \legend{
        FCR \& arbitrage, Limited arbitrage, Arbitrage only
    }
    \end{axis}
\end{tikzpicture}
\end{subfigure}
\begin{subfigure}{0.48\textwidth}
\centering
\begin{tikzpicture}[font=\scriptsize]
    \begin{axis}[
        height=4cm,
        width=8cm,
        enlarge x limits=false,
        ymin = 0, ymax = 240,
        ytick distance = 40,
        yticklabel style={
        /pgf/number format/fixed,
        /pgf/number format/precision=0
        },
        scaled y ticks=false,
        xmin = 0, xmax = 1456,
        xtick distance=182,
        xlabel={Test day in 1 July 2020 through 30 June 2024},
        ylabel={},
        title={Cumulative energy output (MWh)},
        grid=both,
        legend style={
        cells={anchor=west},
        legend pos= north west,
        font=\scriptsize
        }, 
    ]
    \addplot+[no marks, const plot, opacity=.75, dashed, thick, NavyBlue] table [x=testday, y=output_e4, col sep=tab] {data/bt_market.txt};
    \addplot+[no marks, const plot, opacity=.75, densely dotted, thick, Orange] table [x=testday, y=output_e9, col sep=tab] {data/bt_market.txt};
    \addplot+[no marks, const plot, opacity=.75, solid, thick, ForestGreen] table [x=testday, y=output_e6, col sep=tab] {data/bt_market.txt};
\end{axis}
\end{tikzpicture}
\end{subfigure}
\caption{Evolution of profits and output for different types of market participation.}
\label{fig:bt_market}
\end{figure}

\subsubsection{Multicountry Comparison}
We repeat Experiment~6 for Austria, Belgium, Germany, the Netherlands, and Switzerland, and observe similar trends. As shown in Figure~\ref{fig:multicountry} in Appendix~\ref{apx:results}, cumulative profits level off and energy output increases in the last year of the planning horizon, reflecting a shift toward higher arbitrage volumes and reduced FCR participation in reaction to declining FCR prices. 

\section*{Conclusion}\addcontentsline{toc}{section}{Conclusion}
We addressed two questions for storage operators: ($i$) How to capture nonlinear SOC dynamics in joint arbitrage and FCR participation on the timescale of market decisions; and ($ii$) What is the effect of joint market participation on profits and energy output?

Theorem~\ref{th:P} establishes that the nonconvex robust market participation probem~\eqref{pb:P}, with functional uncertainties and continuous-time constraints, is equivalent to a finite-dimensional mixed-integer bilinear optimization problem, which discretizes time on the scale of market decisions. Maybe surprisingly, enforcing the continuous-time constraints requires just one additional linear constraint per trading interval. Standard practice discretizes time before optimization, but  Example~\ref{ex:time_discretization} shows that this may lead to SOC violations. Continuous-time constraints avoid such issues with negligible computational overhead.

Although commercial solvers can handle the exact bilinear formulation, solve times are too long for practical use in our numerical experiments. We circumvent this challenge by introducing a mixed-integer linear relaxation and restriction and find the gap between them to be negligible in practice. The restriction solves in 30min on average.

Compared to no FCR participation, joint market participation increases profits and reduces energy output. FCR participation introduces SOC uncertainty, as power production depends on FCR signals that are revealed in real time. To address this uncertainty, we design an intraday trading strategy that compensates FCR-induced SOC fluctuations. Proposition~\ref{prop:intraday} establishes that the strategy relaxes SOC constraints by narrowing the set of admissible FCR signals, and slightly tightens constraints on charging and discharging power. In a four-year backtest, the strategy enables higher FCR bids, which more than double profits and reduce energy output by~15\%. Intraday trading also reduces the solve time to under~5s on average and eliminates any penalty from submitting FCR bids in the morning of the day preceding delivery, as required under current market rules, rather than submitting bids at midnight on the day of delivery. 

Our intraday strategy guarantees SOC feasibility only under lossless charging and discharging. With~85\% roundtrip efficiency, the SOC ranges from 4.9kWh to 90.2kWh, within physical limits (0-100kWh), but outside the tighter operational range (10-90kWh) used to mitigate storage degradation. The operational bounds are met on 91\% of all days in our backtest. While physically feasible, the strategy leaves open future work on SOC guarantees under charging and discharging losses. The fast solve time makes our method suited for integration into more complex models with uncertain price forecasts or discrepancies between day-ahead and intraday prices.

\paragraph{Acknowledgments.}
We acknowledge the MIT SuperCloud and Lincoln Laboratory Supercomputing Center for providing HPC resources that have contributed to our research results.

\linespread{1}
\small
\addcontentsline{toc}{section}{References (Main Text)}
\putbib[_mybib]
\end{bibunit}

\newpage

\linespread{1.5}
\normalsize
\appendix
\begin{bibunit}[abbrvnat]
\renewcommand{\theequation}{\Alph{section}\arabic{equation}}
\setcounter{equation}{0}
\renewcommand{\thefigure}{A\arabic{figure}} 
\setcounter{figure}{0}
\renewcommand{\thetable}{A\arabic{table}} 
\setcounter{table}{0}
\section{Lifting and Adjoint Operators, Assumptions, Data, Results}

\subsection{Lifting and Adjoint Operators}\label{apx:L}
The lifting operator maps any vector $\bm{v} \in \R^K$ to a piecewise constant function defined as $(L_t \bm{v} )(\tau) = v_{\ceil{\tau / \Delta t}}$ if $\tau \in [0, t)$, $= 0$ otherwise, for all $\tau \in \T$. The scaled adjoint operator maps any function $w\in\set{R}(\T,\R)$ to a $K$-dimensional vector defined as $(L^\dagger_t w)_l = \frac{1}{\sigma_l(t)} \int_{\T_l}w(\tau) \, \mathrm{d}(\tau)$ if $l < k$, $= \frac{1}{\sigma_k(t)} \int_{(k-1)\Delta t}^t w(\tau) \, \mathrm{d}(\tau)$ if $l = k$, $= 0$ otherwise, for all $l \in \K$. Note that $L^\dagger_t$ is indeed adjoint to $L_t$ because $\int_\T (L_t \bm v)w(\tau) \, \mathrm{d}\tau = \bm \sigma \odot \bm v^\top (L^\dagger_t w)$ for all $\bm v \in \mathbb{R}^K$ and $w \in \set{R}(\set{T}, \mathbb{R})$.

\subsection{Assumptions}\label{apx:ass}
In the following, we discuss the assumptions in Section~\ref{sec:app}.
\begin{enumerate}
    \item Day-ahead and availability prices are known one day in advance and exogenous. In practice, price risk can be reduced through new market products and better forecasting. Forecasts have been improving \citep{zhang2022short, kraft2020modeling} and 
    electricity markets are offering new products that allow storage operators to reduce price risk \citep{de2019implications}, such as loop block orders on the day-ahead market.
    \item There are no volume ticks. In practice, the volume tick is 1MW for frequency regulation bids and 0.1MW for day-ahead bids.
    \item There is no degradation if the state-of-charge is constrained to lie within 10\% and 90\% of the storage capacity. In practice, limiting the usable range of storage reduces but does not fully prevent degradation \citep{thompson2018economic}.
    \item The maximum charging and discharging power as well as the charging and discharging efficiencies are constant. In practice, they depend, among others. on the state-of-charge and ambient temperature \citep{pandzic2018storage}.
    \item The cost-to-go function is zero if the terminal state-of-charge induced by trading on the day-ahead market is greater or equal to $y_0$, and infinite otherwise, which is modeled by the constraint $y_0 - \Delta t \sum_{k \in \K} \alpha_k \geq y^\star$.
    \item Intraday prices are the same as day-ahead prices, intraday bids are made over 15 minute intervals and can be submitted right before the start of an interval. In practice, there can be a lead-time of, \eg, 5~minutes in German intraday markets \citep{kaya2024delivering}. Such lead times may be accounted for by considering a greater deviation time budget. Intraday prices have a limited influence on the results because intraday bids compensate FCR-induced state-of-charge fluctuations only. The bids will thus be small and tend to cancel themselves. 
    \item The battery is available for use on all days. In practice, there would be some downtime for maintenance and repairs.
\end{enumerate}

\subsection{Input Data}\label{apx:input_data}
Our input data stems from the following sources.
\begin{itemize}
    \item Day-ahead prices come from the European Network of Transmission System Operators for Electricity (ENTSOE, \url{https://transparency.entsoe.eu}).
    \item Availability prices for FCR come from a European platform for balancing reserves (\url{https://regelleistung.net}) and from the French grid operator (\url{https://www.services-rte.com}).
    \item Frequency measurements with 10s~resolution come from the French grid operator (\url{https://www.services-rte.com}).
\end{itemize}

\subsubsection{Empirical Regulation Signal}
\begin{figure}[t]
\centering
\includegraphics[width=12cm]{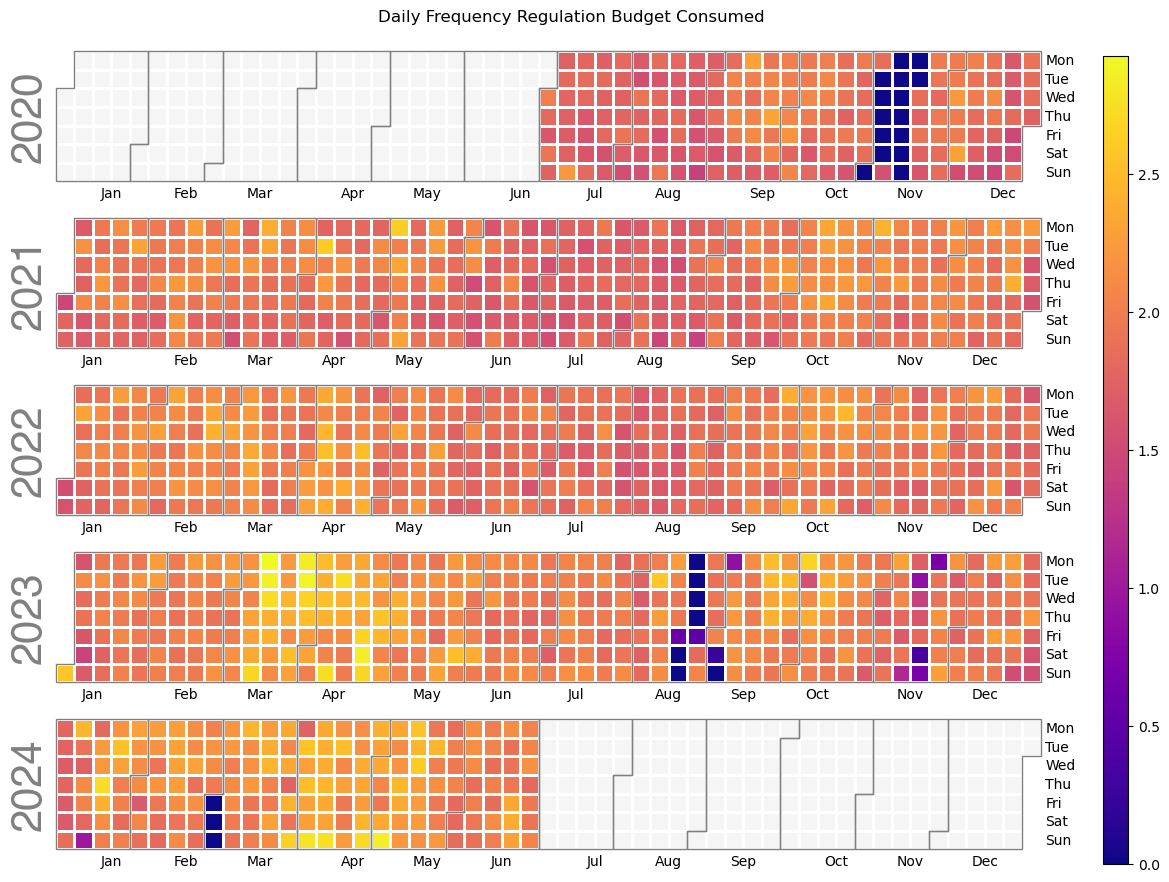}
\caption{Total daily frequency budget consumed by the regulation signal for the French balancing zone across the experimental time period}
\label{fig:freq_budget}
\end{figure}
Based on the frequency measurements, we compute the empirical regulation signal and check whether it falls in the uncertainty set~$\Xi$. We choose $\gamma' = 15$min, which yields a deviation budget of $\gamma = 2.75$h according to equation~\ref{eq:gamma}. The empirical regulation signal has a minimum value of $-0.789$ and a maximum value of $1$, which respects the admissible range of $[-1, 1]$. The deviation budget is exceeded on March 13, 14, 27, 28, and April 22 in 2023 and on April 7, 14, and May 5 in 2024. The empirical regulation signal uses 70\% of the deviation time budget on average and 106\% of the budget on the day with the highest deviation, see Figure~\ref{fig:freq_budget}. Choosing a higher value for $\gamma'$ would reduce the number of days above the budget. However, it would also reduce the amount of regulation power that storage operators can sell because they would need to withhold a greater amount of energy for each unit of regulation power. 

\subsection{Results}\label{apx:results}

\begin{figure}[H]
\centering
\begin{tikzpicture}[font=\scriptsize]
    \begin{axis}[
        enlarge x limits=false,
        ymin = 0, ymax = 1,
        ytick distance = 0.1,
        scaled y ticks=false,
        xmin = 0, xmax = 100,
        xtick distance=10,
        xlabel={State-of-charge (kWh)},
        ylabel={Probability (-)},
        grid=both, 
        legend style={
        cells={anchor=west},
        legend pos= north west,
        font=\scriptsize
        }, 
    ]
    \addplot+[no marks, const plot, opacity=1, solid, Orange] table [x=btsocmin, y=ep, col sep=tab] {data/bt_soc_ecdf.txt};  
    \addplot+[no marks, const plot, opacity=1, dashed, NavyBlue] table [x=btsocmax, y=ep, col sep=tab] {data/bt_soc_ecdf.txt};
    \legend{
        Daily min, Daily max
    }
    \end{axis}
\end{tikzpicture}
\caption{Empirical cumulative distribution of the daily minimum and maximum state-of-charge under intraday trading in Experiment~6.}
\label{fig:btsocecdf}
\end{figure}
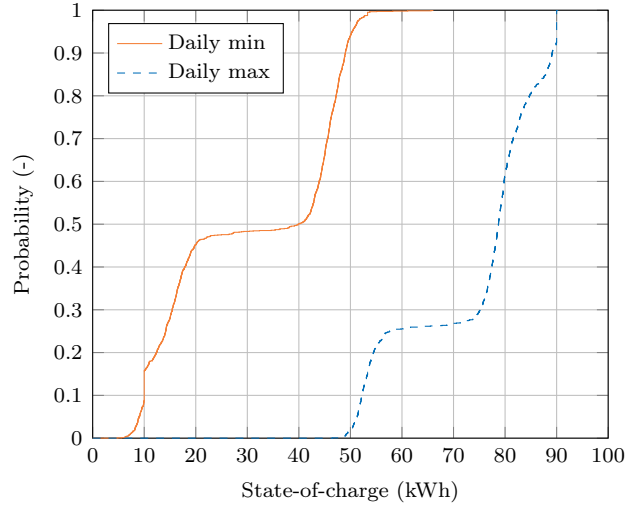

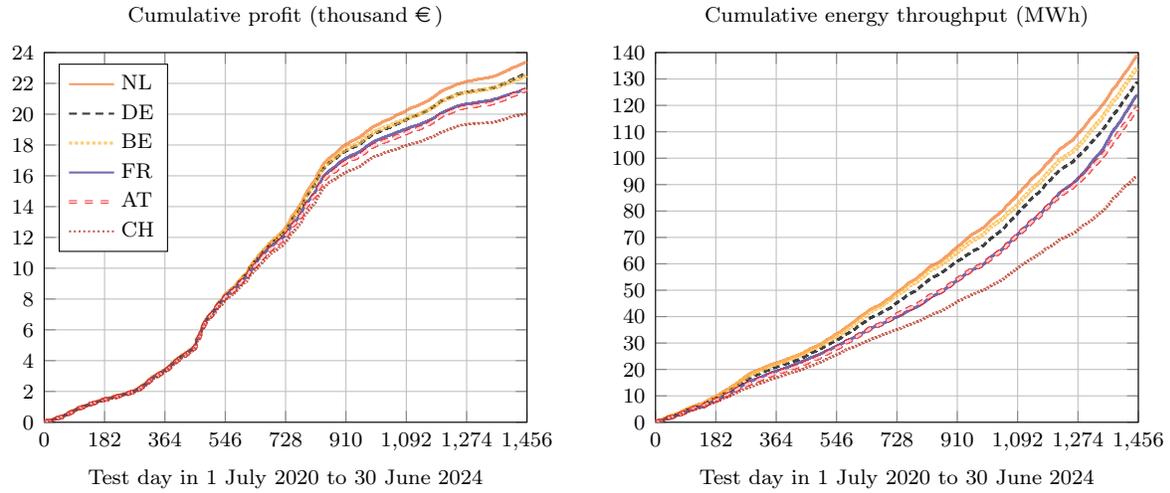
\begin{figure}[H]
\begin{subfigure}{0.48\textwidth}
\centering
\begin{tikzpicture}[font=\scriptsize]
    \begin{axis}[
        height=6.5cm,
        width=8cm,
        enlarge x limits=false,
        ymin = 0, ymax = 24,
        ytick distance = 2,
        yticklabel style={
        /pgf/number format/fixed,
        /pgf/number format/precision=0
        },
        scaled y ticks=false,
        xmin = 0, xmax = 1456,
        xtick distance=182,
        xlabel={Test day in 1 July 2020 to 30 June 2024},
        ylabel={},
        title={Cumulative profit (thousand \EUR{})},
        grid=both, 
        legend style={
        cells={anchor=west},
        legend pos= north west,
        font=\scriptsize
        }, 
    ]
    \addplot+[no marks, const plot, opacity=.75, solid, thick, Orange] table [x=testday, y=NL, col sep=tab] {data/bt_profit.txt};
    \addplot+[no marks, const plot, opacity=.75, densely dashed, thick, black] table [x=testday, y=DE, col sep=tab] {data/bt_profit.txt};
    \addplot+[no marks, const plot, opacity=.75, densely dotted, very thick, Dandelion] table [x=testday, y=BE, col sep=tab] {data/bt_profit.txt};
    \addplot+[no marks, const plot, opacity=.75, solid, thick, BlueViolet] table [x=testday, y=FR, col sep=tab] {data/bt_profit.txt};
    \addplot+[no marks, const plot, opacity=.75, dashed, thin, double, Red] table [x=testday, y=AT, col sep=tab] {data/bt_profit.txt};
    \addplot+[no marks, const plot, opacity=.75, densely dotted,  thick, BrickRed] table [x=testday, y=CH, col sep=tab] {data/bt_profit.txt};
    \legend{
        NL, DE, BE, FR, AT, CH
    }
    \end{axis}
\end{tikzpicture}
\end{subfigure}
\begin{subfigure}{0.48\textwidth}
\centering
\begin{tikzpicture}[font=\scriptsize]
    \begin{axis}[
        height=6.5cm,
        width=8cm,
        enlarge x limits=false,
        ymin = 0, ymax = 130,
        ytick distance = 10,
        yticklabel style={
        /pgf/number format/fixed,
        /pgf/number format/precision=0
        },
        scaled y ticks=false,
        xmin = 0, xmax = 1456,
        xtick distance=182,
        xlabel={Test day in 1 July 2020 to 30 June 2024},
        ylabel={},
        title={Cumulative energy output (MWh)},
        grid=both, 
        legend style={
        cells={anchor=west},
        legend pos= north west,
        font=\scriptsize
        }, 
    ]
    \addplot+[no marks, const plot, opacity=.75, solid, thick, Orange] table [x=testday, y=NL, col sep=tab] {data/bt_output.txt};
    \addplot+[no marks, const plot, opacity=.75, densely dashed, thick, black] table [x=testday, y=DE, col sep=tab] {data/bt_output.txt};
    \addplot+[no marks, const plot, opacity=.75, densely dotted, very thick, Dandelion] table [x=testday, y=BE, col sep=tab] {data/bt_output.txt};
    \addplot+[no marks, const plot, opacity=.75, solid, thick, BlueViolet] table [x=testday, y=FR, col sep=tab] {data/bt_output.txt};
    \addplot+[no marks, const plot, opacity=.75, dashed, thin, double, Red] table [x=testday, y=AT, col sep=tab] {data/bt_output.txt};
    \addplot+[no marks, const plot, opacity=.75, densely dotted, thick, BrickRed] table [x=testday, y=CH, col sep=tab] {data/bt_output.txt};
    \end{axis}
\end{tikzpicture}
\end{subfigure}
\caption{Profits and energy output for Austria (AT), Belgium (BE), Switzerland (CH), Germany (DE), France (FR), and the Netherlands (NL).}
\label{fig:multicountry}
\end{figure}

\begin{table}[H]
\tiny
\centering
\begin{tabular}{l|rrrrrrrrr}
    \toprule
    \multicolumn{10}{c}{Experiments} \\
    \midrule
    Number & 1 & 2 & 3 & 4 & 5 & 6 & 7 & 8 & 9 \\ 
    \midrule
    \multicolumn{10}{c}{Parameters} \\
    \midrule
    Model type & Relax & Restrict & Restrict & Exact & Restrict & Restrict & Restrict & Restrict & Restrict \\
    Bidding time & Midnight	& Midnight & Midnight &	Midnight & 8am & 8am & Midnight & 8am & 8am \\
    Roundtrip efficiency (-) & 0.846 & 0.846 & 0.846 & 0.846 &	0.846	& 0.846	& 0.846	& 1.000	& 0.846 \\
    FCR participation & Yes & Yes & Yes & No & Yes & Yes & Yes & Yes & Yes \\
    Day-ahead participation & Full & Full & Full & Full & Full & Full & Full & Full & Limited \\
    Intraday trading & No & No & No & No & No & Yes & Yes & Yes & Yes \\
    Coupling between days & No & No & Yes & Yes & Yes & Yes & Yes & Yes & Yes \\
    Solve time limit (min) & 60 & 60 & 15 & 15 & 15 & 15 & 15 & 15 & 15 \\
    \midrule
    \multicolumn{10}{c}{Objective quality} \\
    \midrule
    Mean objective value (EUR) & 6.990 & 6.990 & 6.858 & 5.624 & 9.760 & -31.096 & 15.335 & 30.935 & -33.549 \\
    Mean objective bound (EUR) & 7.003 & 7.017 & 6.899 & 5.624 & 9.809 & -31.094 & 15.335 & 30.935 & -33.549 \\
    Mean gap (\%) & 0.186 & 0.385 & 0.594 & 0.000 & 0.500 & 0.006 & 0.000 & 0.000 & 0.000 \\
    Mean bilinear violations (\#) & 11.995 & 0.000 & 0.000 & 0.000 & 0.000 & 0.000 & 0.000 & 0.000 & 0.000 \\
    \midrule
    \multicolumn{10}{c}{Mean daily profit (EUR)} \\
    \midrule
    \textbf{Total} & \textbf{6.988} & \textbf{6.987} & \textbf{6.855} & \textbf{5.624} & \textbf{4.961} & \textbf{14.897} & \textbf{14.985} & \textbf{17.474} & \textbf{13.134} \\
    FCR & 2.815 & 2.816 & 2.795 & 0.000 & 1.643 & 12.882 & 12.849 & 11.680 & 13.844 \\
    Day-ahead & 4.173 & 4.171 & 4.061 & 5.624 & 3.318 & 3.056 & 2.567 & 6.121 & 0.560 \\
    Intraday & 0.000 & 0.000 & 0.000 & 0.000 & 0.000 & -1.042 & -0.431 & -0.328 & -1.270 \\
    \midrule
    \multicolumn{10}{c}{Mean daily energy output (kWh)} \\
    \midrule
    & \textbf{91.706} & \textbf{91.707} & \textbf{92.753} & \textbf{143.045} & \textbf{75.011} & \textbf{78.484} & \textbf{78.003} & \textbf{126.831} & \textbf{37.377} \\
    \midrule
    \multicolumn{10}{c}{Solve time} \\
    \midrule
    Mean (s) & 980.229 & 1492.190 & 514.994 & 0.010 & 555.172 & 3.538 & 1.537 & 0.716 & 0.562 \\
    Maximum (min) & 60.039 & 60.031 & 15.023 & 0.031 & 15.014 & 2.590 & 0.641 & 0.046 & 0.096 \\
    \midrule
    \multicolumn{10}{c}{Planned worst-case state-of-charge (kWh)} \\
    \midrule
    Minimum & 10.000 & 10.000 & 10.000 & 10.000 & 10.000 & 10.000 & 10.000 & 10.000 & 28.006 \\
    True maximum & 91.731 & 90.000 & 90.000 & 90.000 & 90.000 & 90.000 & 90.000 & 90.000 & 90.000 \\
    Modeled maximum & 90.000 & 90.000 & 90.000 & 90.000 & 90.000 & 90.000 & 90.000 & 90.001 & 90.000 \\
    \midrule
    \multicolumn{10}{c}{Empirical state-of-charge levels (kWh)} \\
    \midrule
    Minimum & 10.000 & 10.000 & 10.000 & 10.000 & 9.999 & 4.864 & 5.022 & 9.999 & 31.010 \\
    Mean daily minimum & 23.996 & 23.965 & 23.449 & 10.622 & 29.754 & 30.605 & 30.161 & 21.252 & 45.914 \\
    Mean SOC at midnight & 53.328 & 53.328 & 52.364 & 53.355 & 52.305 & 52.905 & 50.763 & 52.307 & 52.384 \\
    Mean daily maximum & 76.953 & 76.912 & 77.141 & 90.000 & 71.480 & 74.190 & 75.101 & 82.064 & 57.791 \\
    Maximum & 90.000 & 90.000 & 90.000 & 90.000 & 90.001 & 90.182 & 90.047 & 90.000 & 82.380 \\
    \midrule
    \multicolumn{10}{c}{Planned worst-case power levels (kW)} \\
    \midrule
    Minimum & -50.000 & -50.000 & -50.000 & -50.000 & -50.000 & -50.000 & -50.000 & -50.000 & -44.671 \\
    Maximum & 50.000 & 50.000 & 50.000 & 50.000 & 50.000 & 50.000 & 50.000 & 50.000 & 44.788 \\
    \midrule
    \multicolumn{10}{c}{Empirical power levels (kW)} \\
    \midrule
    Minimum & -50.000 & -50.000 & -50.000 & -50.000 & -50.000 & -50.000 & -50.000 & -50.000 & -35.251 \\
    Mean daily minimum & -34.002 & -33.967 & -34.539 & -49.687 & -30.994 & -28.961 & -28.645 & -34.670 & -18.520 \\
    Mean daily maximum & 33.879 & 33.887 & 34.265 & 49.162 & 28.918 & 30.479 & 30.282 & 36.296 & 19.767 \\
    Maximum & 50.000 & 50.000 & 50.000 & 50.000 & 50.000 & 50.000 & 50.000 & 50.000 & 44.197 \\
    \bottomrule
\end{tabular}
\caption{Numerical results for the French test case.}
\label{tab:results}
\end{table} 
\section{Proofs}\label{sec:proofs}

This section contains the proofs of all theorems and propositions. We first prove basic properties on the power output function (Proposition~\ref{prop:x}), the SOC function (Proposition~\ref{prop:y}), and the uncertainty set (Propositions~\ref{prop:sym} and~\ref{prop:Xi_k}). Next, we prove the reformulations of the bounds on power output (Proposition~\ref{prop:charger}) and the SOC (Propositions~\ref{prop:soc_lb} and~\ref{prop:soc_ub}). Subsequently, we prove the finite-dimensional reformulation of the robust decision problem (Theorem~\ref{th:P}), and show that problem~\eqref{pb:P} reduces to a linear program for specific parameter values~(Proposition~\ref{prop:tractable}) and admits problem~\eqref{pb:P:restrict} as restriction~(Proposition~\ref{prop:pres}). We prove the bound on the difference between maximum SOC estimates from the relaxed problem~\eqref{pb:P:relax} and the restricted problem~\eqref{pb:P:restrict}~(Proposition~\ref{prop:bound}). Finally, we prove Proposition~\ref{prop:intraday} on feasible intraday trading strategies.

\subsection{Basic Properties}

\begin{proof}[Proof of Proposition~\ref{prop:x}]
    The result follows directly from $x^\uparrow$ and $x^\downarrow$ being nonnegative functions.
\end{proof}

\begin{proof}[Proof of Proposition~\ref{prop:y}]
    By definition, the SOC function is given as
    \begin{align*}
        & y\left( x^0, x^\uparrow, x^\downarrow, \xi, y_0, t \right) \\
        = & y_0 + \int_0^t 
        \eta^\mathrm{c} \left[ x\left(x^0(\tau), x^\uparrow(\tau), x^\downarrow(\tau), \xi(\tau)\right)  \right]^-
        - \frac{1}{\eta^\mathrm{d}} \left[ x\left(x^0(\tau), x^\uparrow(\tau), x^\downarrow(\tau), \xi(\tau)\right)  \right]^+
        \, \mathrm{d}\tau \\
        = & y_0 + \int_0^t \min\left\{
        -\eta^\mathrm{c} x\left(x^0(\tau), x^\uparrow(\tau), x^\downarrow(\tau), \xi(\tau)\right),
        - \frac{x\left(x^0(\tau), x^\uparrow(\tau), x^\downarrow(\tau), \xi(\tau)\right)}{\eta^\mathrm{d}} \right\}
        \, \mathrm{d}\tau,
    \end{align*}
    where the second equality holds because $0 < \eta^\mathrm{c} \leq \frac{1}{\eta^\mathrm{d}}$. The integrand is concave in $x(\cdot)$ because the minimum of affine functions is a concave function, and decreasing in $x(\cdot)$ because $0 < \eta^\mathrm{c} \leq \frac{1}{\eta^\mathrm{d}}$. Moreover, the integrand is concave in $x^0$, $x^+$, and $x^-$ because  $x(\cdot)$ is affine in $x^0$, $x^+$, and $x^-$, and the composition of a concave function with an affine function yields a concave function~\citep[p.~79]{SB04}. Thus, the SOC function is concave in $x^0$, $x^+$, and $x^-$, because integration preserves convexity~\citep[p.~79]{SB04}. The monotonicity properties hold because the   composition of a decreasing function with an increasing/nondecreasing/nonincreasing function yields a decreasing/nonincreasing/nondecreasing function.
\end{proof}

\begin{proof}[Proof of Proposition~\ref{prop:sym}]
    We have $\pm \xi \in \set{R}(\set{T},[-1,1]) \iff \pm \vert \xi \vert \in \set{R}(\set{T},[-1,1])$ and 
    \begin{equation*}
        \int_\T \vert \xi(t) \vert \, \mathrm{d}t
        =
        \int_\T \vert (\vert \xi(t) \vert) \vert \, \mathrm{d}t
        =
        \int_\T \vert -(\vert \xi(t) \vert) \vert \, \mathrm{d}t. \qedhere
    \end{equation*}
\end{proof}

\begin{proof}[Proof of Proposition~\ref{prop:Xi_k}]
    We first show that $L_t L^\dagger_t \Xi^+ \subseteq \Xi^+ $ for any $t \in \T_k$ and any $k \in \K$. If $\xi \in \set{R}(\T, [0,1])$, then $L_t L^\dagger_t \xi \in \set{R}(\T, [0,1])$ because $\inf_{\tau \in \T} \, (L_t L^\dagger_t \xi)(\tau) = 0$ if $t < T$,
    \begin{align}
        & \inf_{\tau \in \T} \, (L_T L^\dagger_T \xi)(\tau)
        =
        \min_{l \in  \{1,\ldots,K\}} \, 
        \frac{\int_{\T_l} \xi(\tau) \, \mathrm{d}\tau}{\Delta t} 
        \geq
        \min_{l \in  \{1,\ldots,K\}} \, 
        \inf_{\tau \in \T_l} \,
        \xi(\tau)
        \geq 0,~~\text{and} \\
        & \sup_{\tau \in \T} \, (L_t L^\dagger_t \xi)(\tau)
        =
        \max_{l \in  \{1,\ldots,k\}} \, 
        \frac{\int_{(l-1)\Delta t}^{\min\{t, \, l \Delta t\}}
        \xi(\tau) \, \mathrm{d}\tau}{\sigma_l(t)} 
        \leq
         \max_{l \in  \{1,\ldots,k\}} \, 
        \sup_{\tau \in [(l-1)\Delta t, \, \min\{t, \, l \Delta t\}]} \,
        \xi(\tau)
        \leq 1.
    \end{align}
    In addition, if $\int_\T \xi(\tau) \, \mathrm{d}\tau \leq \gamma$, then $\int_\T L_t L^\dagger_t \xi(\tau) \, \mathrm{d}\tau \leq \gamma$ because
    \begin{equation}
        \int_\T L_t L^\dagger_t \xi(\tau) \, \mathrm{d}\tau
        =
        \sum_{l = 1}^{k-1} \int_{\T_l} (L^\dagger_t \xi)_l \, \mathrm{d}\tau
        + \int_{(k-1)\Delta t}^t (L^\dagger_t \xi)_k \, \mathrm{d}\tau
        = 
        \int_0^t \xi(\tau) \, \mathrm{d}\tau
        \leq 
        \int_\T \xi(\tau) \, \mathrm{d}\tau,
    \end{equation}
    where the inequality holds because $\xi$ is nonnegative and $0 \leq t \leq T$. To prove that 
    \begin{equation*}
        L^\dagger_t \Xi^+ = \left\{ \bm \xi \in [0, 1]^K: \sum_{l = 1}^{k} \sigma_l(t) \xi_l \leq \gamma, \, \xi_l = 0 ~ \forall l \in \{k+1,\ldots,K\} \right\},
    \end{equation*}
    we first note for any function $\xi \in \Xi^+$ that $L^\dagger_t \xi \in [0, 1]^K$ because $\xi \in \set{R}(\T, [0,1])$, 
    \begin{equation}
        \sum_{l = 1}^{k} \sigma_l(t) (L^\dagger_t \xi)_l = \int_0^t \xi(\tau) \, \mathrm{d}\tau \leq \int_\T \xi(\tau) \leq \gamma,
    \end{equation}
    and $(L^\dagger_t \xi)_l = 0$ for all $l \in \{k+1,\ldots,K\}$.
    Next, we fix an arbitrary vector $\bm \xi \in [0,1]^K$ such that $\sum_{l = 1}^{k} \sigma_l(t) \xi_l \leq \gamma$ and $\xi_l = 0$ for all $l \in \{k+1,\ldots,K\}$. Then, $L_t \bm \xi \in \Xi^+$ as $L_t \bm \xi \in \set{R}(\T, [0,1])$ and 
    \begin{equation}
        \int_\T (L_t \bm \xi)(\tau) \, \mathrm{d}\tau
        = \int_0^t (L_t \bm \xi)(\tau) \, \mathrm{d}\tau
        = \sum_{l = 1}^{k-1} \int_{\T_l} \xi_l \, \mathrm{d}\tau
        + \int_{(k-1)\Delta t}^t \xi_k \, \mathrm{d}\tau
        = \sum_{l = 1}^{k} \int_{\T_l} \sigma_l(t) \xi_l
        \leq \gamma.
    \end{equation}
    Applying the adjoint operator to both sides of the set relation yields $L^\dagger_t L_t \bm \xi = \bm \xi \in L^\dagger_t \Xi^+$, where the equality holds because $\xi_l = 0$ for all $l$ in $\{k+1, \ldots, K\}$.
\end{proof}

\subsection{Lower and Upper Bounds on Power Output}

\begin{proof}[Proof of Proposition~\ref{prop:charger}]
    The first equivalence holds because
    \begin{equation}
        \max_{\xi \in \Xi, \, t \in \T} ~ x(x^0(t), x^\uparrow(t), x^\downarrow(t), \xi(t))
        =
        \max_{t \in \T} ~ x^0(t) + x^\uparrow(t) 
        = 
        \max_{k \in \K} ~ x^0_k + x^\uparrow_k,
    \end{equation}
    where the first equality holds because the power output function is nondecreasing in~$\xi$. For any fixed~$t$, it is thus optimal to set $\xi(t)$ as large as possible, \ie, $\xi(t) = 1$. The second equality holds because $x^0$ and $x^\uparrow$ are piecewise constant. The proof for the second equivalence is similar and omitted for brevity.
\end{proof}

\subsection{Lower Bound on the SOC}

\begin{proof}[Proof of Proposition~\ref{prop:soc_lb}]
    We have
    \begin{equation}
        y(x^0, x^\uparrow, x^\downarrow, \xi, y_0, t) \geq \ubar y ~~ \forall t \in \T,~\forall \xi \in \Xi 
        ~~ \iff ~~
        \min_{\xi \in \Xi, \, t \in \T}~y(x^0, x^\uparrow, x^\downarrow, \xi, y_0, t) \geq \ubar y.
    \end{equation}
    We first consider the minimization over $\xi$ for $t = k \Delta t$ and any $k \in \{ 0,\ldots, K\}$,
    \begin{align}
        & \min_{\xi \in \Xi}~y( x^0, x^\uparrow, x^\downarrow, \xi, y_0, t )
        \overset{(1)}{=} \min_{\xi \in \Xi^+}~y( x^0, x^\uparrow, x^\downarrow, \xi, y_0, t ) \\
        \overset{(2)}{=} & \min_{\xi \in \Xi^+} y_0 + \sum_{l = 1}^k \int_{\T_l} \min\left\{ -\eta^\mathrm{c} \left( x^0_l + \xi(\tau) x^\uparrow_l \right), - \frac{1}{\eta^\mathrm{d}} \left( x^0_l + \xi(\tau) x^\uparrow_l \right) \right\} \, \mathrm{d}\tau \\
        \overset{(3)}{=} & \min_{\xi \in \Xi^+ \cap \set{R}(\T, \{0,1\})} y_0 + \sum_{l = 1}^k \int_{\T_l} \min\left\{ -\eta^\mathrm{c} \left( x^0_l + \xi(\tau) x^\uparrow_l \right), - \frac{1}{\eta^\mathrm{d}} \left( x^0_l + \xi(\tau) x^\uparrow_l \right) \right\} \, \mathrm{d}\tau \\
        \overset{(4)}{=} & \min_{\xi \in \Xi^+ \cap \set{R}(\T, \{0,1\})} y_0 + \sum_{l = 1}^k \int_{\T_l} \min\left\{ -\eta^\mathrm{c} x^0_l, -\frac{x^0_l}{\eta^\mathrm{d}} \right\} (1 - \xi(\tau)) \\
        & \phantom{\min_{\xi \in \Xi^+ \cap \set{R}(\T, \{0,1\})} y_0 + \sum_{l = 1}^k \int_{\T_l}} + \min\left\{ -\eta^\mathrm{c} (x^0_l + x^\uparrow_l), -\frac{x^0_l + x^\uparrow_l}{\eta^\mathrm{d}} \right\} \xi(\tau) \, \mathrm{d}\tau, \notag
    \end{align}
    where the first equality follows from $y$ being nonincreasing in~$\xi$ and from the symmetry of~$\Xi$. In fact, for any given regulation signal~$\xi \in \Xi$, the signal~$\vert \xi \vert$ will also be in $\Xi$ and achieve a SOC that is at least as low as the one achieved by~$\xi$. We can thus restrict $\xi$ to be nonnegative without loss of optimality. The second equality holds because $0 < \eta^\mathrm{c} \leq \frac{1}{\eta^\mathrm{d}}$ and because $x^0$ and $x^\uparrow$ are piecewise constant. The third equality follows from Lemma~1 in \cite{my_v2g_lp}, which applies because the integrand is concave, continuous, and nonincreasing in~$\xi(\tau)$ and because $\Xi$ is a special case of the uncertainty set
     \begin{equation}
        \left\{ \xi \in \set{R}(\set{T}, [-1, 1]): \int_{[t - \Gamma]^+}^t \vert \xi(t') \vert \, \mathrm{d}t' \leq \gamma ~~ \forall t \in \set{T} \right\},
    \end{equation}
    used in \cite{my_v2g_lp}. In fact, $\Xi$ can be obtained by setting the additional problem parameter $\Gamma$ to $T$. The fourth equality holds because the integrand is only ever evaluated at $\xi(\tau) = 0$ and $\xi(\tau) = 1$, and can thus be linearized. To simplify notation, we set
    \begin{equation}
        \alpha_l = \max\left\{ \eta^\mathrm{c} x^0_l, \, \frac{x^0_l}{\eta^\mathrm{d}} \right\}
        ~~ \mathrm{and} ~~
        \beta_l = \max\left\{ \eta^\mathrm{c} (x^0_l + x^\uparrow_l), \frac{x^0_l + x^\uparrow_l}{\eta^\mathrm{d}} \right\}.
    \end{equation}
    So far, we have transformed a continuous nonconvex optimization problem into a continuous linear program. Now, we will transform the continuous linear program into a standard linear program with vectorial decision variables,
    \begin{align}
        & \min_{\xi \in \Xi^+ \cap \set{R}(\T, \{0,1\})} y_0 + \sum_{l = 1}^k \int_{\T_l} -\alpha_l (1 - \xi(\tau)) - \beta_l \xi(\tau) \, \mathrm{d}\tau \\
        \overset{(1)}{=} & \min_{\xi \in \Xi^+} y_0 + \sum_{l = 1}^k \int_{\T_l} -\alpha_l (1 - \xi(\tau)) - \beta_l \xi(\tau) \, \mathrm{d}\tau  \\
        \overset{(2)}{=} & \min_{\xi \in L_T L^\dagger_T \Xi^+} y_0 + \sum_{l = 1}^k \int_{\T_l} -\alpha_l (1 - \xi_l ) - \beta_l \xi_l \, \mathrm{d}\tau \\
        \overset{(3)}{=} & \min_{\bm \xi \in L^\dagger_T \Xi^+} y_0 + \Delta t \sum_{l = 1}^k -\alpha_l (1 - \xi_l) - \beta_l \xi_l.
    \end{align}
    The first equality follows again from Lemma~1 in \cite{my_v2g_lp}, which continues to apply because the integrand is still continuous, nonincreasing (as $\beta_l \geq \alpha_l$), and concave (in fact, affine) in~$\xi(\tau)$. The second equation holds because $\xi$ is integrated against a piecewise constant function and can thus be averaged over the trading intervals, and because $L_T L^\dagger_T \Xi^+ \subseteq \Xi^+$. The third equality holds because piecewise constant functions can be modeled by vectors.

    For $t \in \T_k$, following a similar reasoning about linear programming sensitivity analysis as in Proposition~8 in \cite{my_v2g_lp}, which applies thanks to Assumption~\ref{ass:div}, one can show that
    \begin{equation}
        \min_{t \in \T_k} \min_{\xi \in \Xi}~y(x^0, x^\uparrow, x^\downarrow, \xi, y_0, t)
        = \min_{l \in \{k-1,k\}} \min_{\xi \in \Xi}~y(x^0, x^\uparrow, x^\downarrow, \xi, y_0, l \Delta t).
    \end{equation}
    Instead of having to consider all $t \in \T$, it is thus sufficient to only consider $t = k \Delta t$ for any $k \in \{ 0,\ldots, K\}$. Using the explicit expression for $L^\dagger_T \Xi^+$ from Proposition~\ref{prop:Xi_k}, we now dualize the linear minimization problem over~$\bm \xi$,
    \begin{align}
        & \min_{\bm \xi \in L^\dagger_T \Xi^+} y_0 + \Delta t \sum_{l = 1}^k -\alpha_l (1 - \xi_l) - \beta_l \xi_l \\
        = & \min_{0 \leq \bm \xi \leq 1}~y_0 - \Delta t \sum_{l = 1}^k \alpha_l (1 - \xi_l) + \beta_l \xi_l ~~\text{s.t.}~~\Delta t \sum_{l = 1}^k \xi_l \leq \gamma \\
        = & \min_{0 \leq \bm \xi \leq 1} ~ \max_{0 \leq \ubar \lambda_k}~y_0 - \gamma \ubar \lambda_k - \Delta t \sum_{l = 1}^k \alpha_l + (\beta_l - \alpha_l - \ubar \lambda_k) \xi_l \\
        = & \max_{0 \leq \ubar \lambda_k}~\min_{0 \leq \bm \xi \leq 1} ~ y_0 - \gamma \ubar \lambda_k - \Delta t \sum_{l = 1}^k \alpha_l + (\beta_l - \alpha_l - \ubar \lambda_k) \xi_l \\
        = & \max_{0 \leq \ubar \lambda_k} ~ y_0 - \gamma \ubar \lambda_k - \Delta t \sum_{l = 1}^k \alpha_l + \left[ \beta_l - \alpha_l - \ubar \lambda_k \right]^+ \\
        = & \max_{0 \leq \ubar \lambda_k, \, \ubar {\bm \Lambda}_k \in \mathbb{R}^k_+} ~ y_0 - \gamma \ubar \lambda_k - \Delta t \sum_{l = 1}^k \alpha_l + \ubar \Lambda_{kl} ~~\text{s.t.}~~ \ubar \Lambda_{kl} \geq \beta_l - \alpha_l - \ubar \lambda_k ~~\forall l = 1,\ldots,k.
    \end{align}
    The optimal value of the maximization problem is decreasing in $\alpha$ and $\beta$, which can thus be used as hypographical variables to linearize the dependence on $x^0$ and $x^\uparrow$ by adding the constraints~\eqref{reformulation:soc_lb:ldr}.
    
    The lower bound $\ubar y$ on the SOC is valid, if $y_0 \geq 0$ and if the optimal value of the maximization problem over the SOC exceeds $\ubar y$ for all $k \in \K$, which is the case if and only if~\eqref{reformulation:soc_lb} is feasible.
\end{proof}

\subsection{Upper Bound on the SOC}

\begin{proof}[Proof of Proposition~\ref{prop:soc_ub}]
    We have
    \begin{equation}
        y(x^0, x^\uparrow, x^\downarrow, \xi, y_0, t) \leq \bar y ~~ \forall t \in \T,~\forall \xi \in \Xi 
        ~~ \iff ~~
        \max_{\xi \in \Xi, \, t \in \T}~y(x^0, x^\uparrow, x^\downarrow, \xi, y_0, t) \leq \bar y.
    \end{equation}

    The upper bound is an upper bound on a concave function with functional uncertainty that must hold at all points in time. To make it tractable, we (1) show that we can consider piecewise constant regulation signals, (2) linearize the SOC function, (3) transform the robust constraints into deterministic constraints, (4) reduce the problem of checking if the upper bound holds for fixed market decisions and a fixed point in time in a given trading interval to a one-dimensional piecewise linear convex optimization problem, (5) show that adding a single linear constraint to that problem insures that the upper bound will hold throughout the trading interval, and (6) show how disjunctive constraints can be used to account for the upper bound when optimizing the market decisions.
    
    \subsubsection{Uncertainty Discretization}
    We first consider the maximization over $\xi$ for any fixed $t \in \T_k$ and any $k \in \K$,
    \begin{align}
        & \max_{\xi \in \Xi}~y( x^0, x^\uparrow, x^\downarrow, \xi, y_0, t )
        \overset{(1)}{=} \max_{\xi \in \Xi^+}~y( x^0, x^\uparrow, x^\downarrow, -\xi, y_0, t ) \\
        \overset{(2)}{=} & \max_{\xi \in \Xi^+}~y_0 + \sum_{l = 1}^{k} \int_{(l-1)\Delta t}^{\min\{t,\, l \Delta t\}} \min\left\{ \eta^\mathrm{c} \left( \xi(\tau) x^\downarrow_l - x^0_l \right), \frac{\xi(\tau) x^\downarrow_l - x^0_l}{\eta^\mathrm{d}} \right\} \, \mathrm{d}\tau \\
        \overset{(3)}{=} & \max_{\xi \in L_t  L^\dagger_t \Xi^+}~y_0 + \sum_{l = 1}^{k} \int_{(l-1)\Delta t}^{\min\{t,\, l \Delta t\}} \min\left\{ \eta^\mathrm{c} \left( \xi(\tau) x^\downarrow_l - x^0_l \right), \frac{\xi(\tau) x^\downarrow_l - x^0_l}{\eta^\mathrm{d}} \right\} \, \mathrm{d}\tau \\
        \overset{(4)}{=} & \max_{\bm \xi \in L^\dagger_t \Xi^+}~ y_0 + \sum_{l = 1}^{k} \sigma_l(t) \min\left\{ \eta^\mathrm{c} \left( \xi_l x^\downarrow_l - x^0_l \right), \frac{\xi_l x^\downarrow_l - x^0_l}{\eta^\mathrm{d}} \right\},
    \end{align}
    where $\sigma_l(t) = \min\{t, \, l \Delta t\} - (l-1) \Delta t$. The first equality follows from $y$ being nonincreasing in $\xi$ and from the symmetry of $\Xi$. For any given regulation signal $\xi \in \Xi$, the signal $-\vert \xi \vert$ is also in $\Xi$ and achieves a SOC that is at least as high as the one achieved by~$\xi$. We thus restrict $\xi$ to be nonnegative and flip its sign in the argument of~$y$. The second equality holds because $0 < \eta^\mathrm{c} \leq \frac{1}{\eta^\mathrm{d}}$ and because $x^0$ and $x^\downarrow$ are piecewise constant. The objective function is independent of $\xi(\tau)$ for $\tau \in [t, T]$. It is thus optimal to set $\xi(\tau) = 0$ for $\tau \in [t, T]$. This restriction maximizes the flexibility in choosing regulation signals $\xi(\tau)$ for $\tau \in [0, t)$. In addition, the objective function is concave in $\xi$ because minima of affine functions are concave and integration preserves convexity \citep[p.~79]{SB04}. The third equality holds because it is optimal to only consider functions in $L_t L^\dagger_t \Xi^+$, which maximize the concave objective function by virtue of being piecewise constant and are guaranteed to be feasible as $L_t L^\dagger_t \Xi^+ \subseteq \Xi^+$ by Proposition~\ref{prop:Xi_k}. The fourth equality holds because piecewise constant functions can be modeled by vectors.
    
    \subsubsection{Linearization}
    Using the explicit expression for $L^\dagger_t \Xi^+$ from Proposition~\ref{prop:Xi_k}, we now linearize the objective function,  
    \begin{align}
    & \max_{0 \leq \bm \xi \leq 1}~ y_0 + \sum_{l = 1}^{k} \sigma_l(t) \min\left\{ \eta^\mathrm{c} \left( \xi_l x^\downarrow_l - x^0_l \right), \frac{\xi_l x^\downarrow_l - x^0_l}{\eta^\mathrm{d}} \right\}
    ~~\text{s.t.}~~ \sum_{l = 1}^{k} \sigma_l(t) \xi_l \leq \gamma \\
    = & \max_{0 \leq \bm \xi \leq 1} \, \min_{0 \leq \bm u \leq 1} \, y_0 + \sum_{l = 1}^{k} \sigma_l(t) \Big( \eta^\mathrm{c} u_l + \frac{1 - u_l}{\eta^\mathrm{d}} \Big)\left( \xi_l x^\downarrow_l - x^0_l \right) ~~\text{s.t.}~~ \sum_{l = 1}^{k-1} \sigma_l(t) \xi_l \leq \gamma \\
    = & \min_{0 \leq \bm u \leq 1} \, y_0 - \sum_{l = 1}^{k} \sigma_l(t) \Big( \eta^\mathrm{c} u_l + \frac{1 - u_l}{\eta^\mathrm{d}} \Big) x^0_{l} \\
    & + \max_{0 \leq \bm \xi \leq 1} \sum_{l = 1}^{k} \sigma_l(t) \Big( \eta^\mathrm{c} u_l + \frac{1 - u_l}{\eta^\mathrm{d}} \Big) \xi_l x^\downarrow_{l}~~\text{s.t.}~~\sum_{l = 1}^k \sigma_l(t) \xi_l \leq \gamma. \notag
    \end{align}
    The constraints $\xi_l = 0$, $l = k+1, \ldots, K$, are not explicitly enforced because they hold at optimality. The first equality holds because the optimal $\bm u$ is binary-valued as the objective function is affine in~$\bm u$. The second equality follows from \citeauthor{vonNeumman1928spiel}'s (1928) minimax theorem, which applies as the objective function is bilinear in $\bm u$ and $\bm \xi$, and both $\bm u$ and $\bm \xi$ lie in compact convex sets.

    \subsubsection{Robust Reformulation}
    We will now dualize the inner maximization problem over $\bm \xi$ so that it becomes a minimization problem and can be combined with the outer minimization problem over $\bm u$. We have
    \begin{align}
        & \max_{0 \leq \bm \xi \leq 1} \sum_{l = 1}^{k} \sigma_l(t) \Big( \eta^\mathrm{c} u_l + \frac{1 - u_l}{\eta^\mathrm{d}} \Big) \xi_l x^\downarrow_{l} ~\text{s.t.}~ \sum_{l = 1}^k \sigma_l(t) \xi_l \leq \gamma \\
        = & \max_{0 \leq \bm \xi \leq 1} \, \min_{0 \leq \bar \lambda_k} \,
        \bar \lambda_k \Big( \gamma - \sum_{l = 1}^k \sigma_l(t) \xi_l \Big)
        + \sum_{l = 1}^{k} \sigma_l(t) \Big( \eta^\mathrm{c} u_l + \frac{1 - u_l}{\eta^\mathrm{d}} \Big) \xi_l x^\downarrow_{l} \\
        = & \min_{0 \leq \bar \lambda_k} \, \max_{0 \leq \bm \xi \leq 1} \,
        \gamma \bar \lambda_k + \sum_{l = 1}^{k}  \sigma_l(t) \Big( \Big( \eta^\mathrm{c} u_l + \frac{1 - u_l}{\eta^\mathrm{d}} \Big) x^\downarrow_l - \bar \lambda_k \Big) \xi_l \\
        = & \min_{0 \leq \bar \lambda_k} \,
        \gamma \bar \lambda_k + \sum_{l = 1}^{k}  \sigma_l(t) \Big[ \Big( \eta^\mathrm{c} u_l + \frac{1 - u_l}{\eta^\mathrm{d}} \Big) x^\downarrow_l - \bar \lambda_k \Big]^+.
    \end{align}
    Introducing the Lagrange multiplier $\bar \lambda_k$ for the constraint $\sum_{l = 1}^k \sigma_l(t) \xi_l \leq \gamma$ yields the first equality. The second equality holds because of linear programming duality, which applies because $\bm \xi$ is bounded. The third equality holds because it is optimal to set $\xi_l = 1$ if its multiplier is nonnegative and $= 0$ otherwise for all $l = 1,\ldots, k$.

    \subsubsection{Dimensionality Reduction: Solving for \texorpdfstring{$\bm u$}{u}}\label{sec:u}
    Using the robust reformulation, we will now solve the minimization problem for $\bm u$. For shorthand notation, we introduce $\Delta \eta = \frac{1}{\eta^\mathrm{d}} - \eta^\mathrm{c}$, which is nonnegative because $0 < \eta^\mathrm{c}, \, \eta^\mathrm{d} < 1$. We have
    \begin{align}
        & \min_{\substack{0 \leq \bm u \leq 1 \\ 0 \leq \bar \lambda_k}} \, y_0 + \gamma \bar \lambda_k + \sum_{l = 1}^k \sigma_l(t) \Big( \Big[ \Big( \eta^\mathrm{c} u_l + \frac{1 - u_l}{\eta^\mathrm{d}} \Big) x^\downarrow_l - \bar \lambda_k \Big]^+ 
        - \Big( \eta^\mathrm{c} u_l + \frac{1 - u_l}{\eta^\mathrm{d}} \Big) x^0_l \Big) \\
        = & \min_{\substack{0 \leq \bm u \leq 1 \\ 0 \leq \bar \lambda_k}} \, y_0 + \gamma \bar \lambda_k + \sum_{l = 1}^k \sigma_l(t)
        \max\Big\{ \Big( \frac{1}{\eta^\mathrm{d}} - \Delta \eta \, u_l \Big) \Big( x^\downarrow_l - x^0_l \Big) - \bar \lambda_k, \, \Big( \Delta \eta \, u_l - \frac{1}{\eta^\mathrm{d}} \Big) x^0_l \Big\} \\
        = & \min_{0 \leq \bar \lambda_k} \,  y_0 + \gamma \bar \lambda_k + \sum_{l = 1}^k \sigma_l(t)
        \min_{0 \leq u_l \leq 1} \max\Big\{ \Big( \frac{1}{\eta^\mathrm{d}} - \Delta \eta \, u_l \Big) \Big( x^\downarrow_l - x^0_l \Big) - \bar \lambda_k, \, \Big( \Delta \eta \, u_l - \frac{1}{\eta^\mathrm{d}} \Big) x^0_l \Big\}.
    \end{align}
    The inner minimization problem can be solved analytically by case distinction on $x^\downarrow_l$ and $x^0_l$ for each $l = 1,\ldots,k$. For example, if $0 \leq x^0_l \leq x^\downarrow_l$, then the first term in the max operator is nonincreasing in $u_l$, the second term is nondecreasing in $u_l$, and it is optimal to set $u_l$ as close as possible to the intersection between the first and the second term, \ie, to
    \begin{equation*}
        u^\ast_l = \max\Big\{ 0, \, \min\Big\{ 
        \frac{1}{1 - \eta^\mathrm{c} \eta^\mathrm{d}} - \frac{\bar \lambda_k}{\Delta \eta \, x^\downarrow_l}, \, 1 \Big\} \Big\}.
    \end{equation*}
    Table~\ref{tab:u} lists the monotonicity properties of the two terms in the max operator, the optimizer $u^\star_l$, and the optimal value of the inner minimization problem for all admissible cases. The optimal value function $\varphi(x^0_l, x^\downarrow_l, \bar \lambda_k)$ of the inner minimization problem  
    is convex nonincreasing and piecewise linear in~$\bar \lambda_k$. For $\bar \lambda_k \geq x^\downarrow_{l} / \eta^\mathrm{d}$, the value function is constant in~$\bar \lambda_k$, see Figure~\ref{fig:varphi}. In summary, the maximum value of the SOC at time~$t$ for fixed market decisions~$x^0$ and~$\xdn$ is the optimal value of the one-dimensional piecewise linear convex optimization problem
    \begin{equation}
    y_0 + \min_{0 \leq \bar \lambda_k } \, \gamma \bar \lambda_k + \sum_{l = 1}^{k} \sigma_l(t) \varphi(x^0_l, x^\downarrow_l, \bar \lambda_k).
    \end{equation}

    \begin{table}
    \centering
    \resizebox{\textwidth}{!}{%
	\begin{tabular}{c|cccl}
    Case & $\big( \frac{1}{\eta^\mathrm{d}} - \Delta \eta \, u_l \big) \big( x^\downarrow_l - x^0_l \big) - \bar \lambda_k$
    & $\big( \Delta \eta \, u_l - \frac{1}{\eta^\mathrm{d}} \big) x^0_l$
    & $u^\star_l$
    & Optimal value
    \\
    \midrule
    $x^\downarrow_{l} \leq x^0_{l}$ & nondecreasing & nondecreasing & 0 & $\max \left\{ \frac{x^\downarrow_{l} - x^0_{l}}{\eta^\mathrm{d}} - \bar \lambda_k, \, -\frac{x^0_{l}}{\eta^\mathrm{d}} \right\}$ \\
    $x^0_{l} \leq 0$ & nonincreasing & nonincreasing & 1 & $\max\left\{ \eta^\mathrm{c}(x^\downarrow_{l} - x^0_{l}) - \bar \lambda_k, \, - \eta^\mathrm{c} x^0_{l} \right\}$ \\
    $ 0 \le x^0_{l} \le x^\downarrow_l \, \land \, x^\downarrow_l > 0 $ & nonincreasing & nondecreasing & $u^\ast_l$ & $\max\left\{ \eta^\mathrm{c}(x^\downarrow_{l} - x^0_{l}) - \bar \lambda_k, \, - \bar \lambda_k \frac{x^0_{l}}{x^\downarrow_{l}}, \, -\frac{x^0_{l}}{\eta^\mathrm{d}} \right\}$
    \end{tabular}
    }
    \caption{The optimal solution to the minimization problem over $u_l$ for $l = 1,\ldots,k$.}
    \label{tab:u}
    \end{table}

    \subsubsection{Time Discretization}
    We will now compute the maximum SOC for any time $t \in \T_k$. To ease notation, we define $\bar{\bar \lambda} = \frac{\bar x - \ubar x}{\eta^\mathrm{d}} \geq \max_{l=1,\ldots,k} x^\downarrow_l/\eta^\mathrm{d}$, where the inequality follows from Proposition~\ref{prop:charger}. We have
    \begin{align}
        & y_0 + \sup_{t \in \T_k} \, \min_{0 \leq \bar \lambda_k } \, \gamma \bar \lambda_k + \sum_{l = 1}^{k} \sigma_l(t) \varphi(x^0_l, x^\downarrow_l, \bar \lambda_k) \\
        = & y_0 + \sup_{t \in \T_k} \, \min_{0 \leq \bar \lambda_k \leq \bar{ \bar \lambda}} \, \gamma \bar \lambda_k + \Delta t \sum_{l = 1}^{k-1} \varphi(x^0_l, x^\downarrow_l, \bar \lambda_k) + (t - (k-1)\Delta t) \varphi(x^0_k, x^\downarrow_k, \bar \lambda_k) \\
        = & y_0 + \min_{0 \leq \bar \lambda_k \leq \bar{\bar \lambda}} \, \gamma \bar \lambda_k + \Delta t \sum_{l = 1}^{k-1} \varphi(x^0_l, x^\downarrow_l, \bar \lambda_k) + \sup_{t \in \T_k} \, (t - (k-1)\Delta t) \varphi(x^0_k, x^\downarrow_k, \bar \lambda_k) \\
         = & y_0 + \min_{0 \leq \bar \lambda_k \leq \bar{\bar \lambda}} \, \gamma \bar \lambda_k + \Delta t \sum_{l = 1}^{k-1} \varphi(x^0_l, x^\downarrow_l, \bar \lambda_k) + \Delta t \, \big[\varphi(x^0_k, x^\downarrow_k, \bar \lambda_k)\big]^+.
    \end{align}
     The first equality follows from substituting $\sigma_l$ by its definition, from $\gamma$ being nonnegative, and from $\varphi(x^0_l, x^\downarrow_l, \bar \lambda_k)$ being constant in $\bar \lambda_k$ for $\bar \lambda_k \geq x^\downarrow_l/\eta^\mathrm{d}$ for $l=1,\ldots,k$. The objective function is thus nondecreasing in $\bar \lambda_k$ for $\bar \lambda_k \geq \bar{\bar \lambda}_k$ and it is optimal to choose $\bar \lambda_k \leq \bar{\bar \lambda}_k$. The second equality follows from \citeauthor{vonNeumman1928spiel}'s (1928) minimax theorem, which applies because the objective function is affine in $t$ for a fixed $\bar \lambda_k$ and convex in $\bar \lambda_k$ for a fixed $t$, and because both $t$ and $\bar \lambda_k$ are restricted to line segments. The third equality holds because it is optimal to set $t = k\Delta t$ if $\varphi(x^0_k, x^\downarrow_k, \bar \lambda_k) \geq 0$ and $=(k-1)\Delta t$ otherwise. 

     \subsubsection{Disjunctive Constraints}\label{sec:M}

     We will now incorporate the upper bound on the SOC directly into the storage operator's decision problem in which $\bm x^0$ and $\bm x^\downarrow$ are decision variables. We have
     \begin{align}
     & \max_{\xi \in \Xi, \, t \in \T_k}\, y(x^0, x^\uparrow, x^\downarrow, \xi, y_0, t) \leq \bar y \\ 
    \iff & \exists \, \bar \lambda_k \in \big[0, \bar{\bar \lambda}], \, \bar{\bm \Lambda}_k \in \mathbb{R}^K : 
    \eqref{reformulation:soc_ub:soc}
    , \,
    \bar \Lambda_{kl} \geq \varphi(x^0_l, x^\downarrow_l, \bar \lambda_k), \, l=1,\ldots,k, \,
    \bar \Lambda_{kk} \geq 0,
    \end{align}
    where the upper bounds on $\varphi(x^0_l, x^\downarrow_l, \bar \lambda_k)$, $l = 1,\ldots,k$, may be complicated because of the case distinction in Table~\ref{tab:u}. For $l = k$, the upper bound can be modeled by affine constraints because
    \begin{equation}
    \begin{aligned}
    \big[\varphi(x^0_k, x^\downarrow_k, \bar \lambda_k)\big]^+ 
    = &
    \max\big\{ 0, \, \eta^\mathrm{c}(x^\downarrow_k - x^0_k) - \bar \lambda_k, \, -\eta^\mathrm{c} x^0_k \big\} \\
    = &
        \begin{cases}
            \varphi(x^0_k, x^\downarrow_k, \bar \lambda_k) & \text{if } x^0_k \leq 0, \\
            \big[ \eta^\mathrm{c}(x^\downarrow_k - x^0_k) - \bar \lambda_k \big]^+ & \text{if } 0 \leq x^0_k \leq x^\downarrow_k, \\
            0 & \text{if } x^\downarrow_k \leq x^0_k.
        \end{cases}
    \end{aligned}
    \end{equation}

    For $l=1,\ldots,k-1$, we introduce the binary variables $\upsilon_{1l}$ and $\upsilon_{2l}$ to model the case distinction. If $\upsilon_{1l} = 1$, the constraints $x^\downarrow_l \leq x^0_l$ and $\bar \Lambda_{kl} \geq \max\{ (x^\downarrow_l - x^0_l)/\eta^\mathrm{d} - \bar \lambda_k, \, -x^0_l/\eta^\mathrm{d} \}$ must hold, otherwise they may not hold. Similarly, if $\upsilon_{2l} = 1$, the constraints $x^0_l \leq 0$ and $\bar \Lambda_{kl} \geq \max\{ \eta^\mathrm{c}(x^\downarrow_l - x^0_l) - \bar \lambda_k, \, - \eta^\mathrm{c} x^0_l \}$ must hold, otherwise they may not hold. Finally, if $\upsilon_{1l}+\upsilon_{2l} = 0$, the constraint $0 \leq x^0_l \leq x^\downarrow_l$ must hold. If $0 = x^0_l = x^\downarrow_l$, then $\bar \Lambda_{kl} \ge 0$. Otherwise, if $0 \leq x^0_l \leq x^\downarrow_l$ and $x^\downarrow_l > 0$, the constraint $\bar \Lambda_{kl} \geq \max\{ \eta^\mathrm{c}(x^\downarrow_l - x^0_l) - \bar \lambda_k, \, - \bar \lambda_l x^0_l/x^\downarrow_l , \,  -x^0_l/\eta^\mathrm{d} \}$ must hold. If $\upsilon_{1l}+\upsilon_{2l} > 0$, the constraint may not hold. The optional constraints can be expressed as
    \begin{align}
        & (1- \upsilon_{1l}) \ubar x \leq x^0_l - x^\downarrow_l \leq \upsilon_{1l} \bar x, \, 
        \upsilon_{2l} \ubar x \leq x^0 \leq (1 - \upsilon_{2l}) \bar x \label{proof:soc_ub:binaries} \\
        & \bar \Lambda_{kl} \geq \frac{x^\downarrow_{l} - x^0_{l}}{\eta^\mathrm{d}} - \bar \lambda_k + (1 - \upsilon_{1l}) \Delta \eta \, \ubar x, \,
        \bar \Lambda_{kl} \geq -\frac{x^0_{l}}{\eta^\mathrm{d}} + \upsilon_{2l} \Delta \eta \, \ubar x \label{proof:soc_ub:Lbd1} \\
        & \bar \Lambda_{kl} \geq \eta^\mathrm{c} (x^\downarrow_{l} - x^0_{l}) - \bar \lambda_k - \upsilon_{1l} \Delta \eta \, \bar x, \,
        \bar \Lambda_{kl} \geq -\eta^\mathrm{c} x^0_{l} - (1 - \upsilon_{2l}) \Delta \eta \, \bar x \label{proof:soc_ub:Lbd2} \\
        & \bar \Lambda_{kl} x^\downarrow_{l} + \bar \lambda_k x^0_{l} \geq \upsilon_{2l} \frac{\ubar x(\bar x - \ubar x)}{\eta^\mathrm{d}} - \upsilon_{1l}\frac{\bar x^2}{4\eta^\mathrm{d}}.\label{proof:soc_ub:bilinear}
    \end{align}
To show this, we rely on two inequalities that arise from Proposition~\ref{prop:charger}: $x^0_l - x^\downarrow_l \geq \ubar x$ and $x^0_l \leq \bar x$, which holds as $x^\uparrow_l \geq 0$. Inequalities~\eqref{proof:soc_ub:binaries} ensure that $\upsilon_{1l} = 0 \implies \ubar x \leq x^0_l - x^\downarrow_l \leq 0$, $\upsilon_{1l} = 1 \implies 0 \leq x^0_l - x^\downarrow_l \leq \bar x$, $\upsilon_{2l} = 0 \implies 0 \leq x^0_l \leq \bar x$, and $\upsilon_{2l} = 1 \implies \ubar x \leq x^0_l \leq 0$. The inequalities~\eqref{proof:soc_ub:Lbd1} and~\eqref{proof:soc_ub:Lbd2} can be shown to be valid via a case distinction on $\upsilon_{1l}$, $\upsilon_{2l}$, and $\bar \lambda_k$. For example, if $\upsilon_{1l} = 1$, then $ \bar \Lambda_{kl} \geq (x^\downarrow_l - x^0_l)/\eta^\mathrm{d} - \bar \lambda_k$ as desired. If $\upsilon_{1l} = 0$, then the constraint may not be binding. We are thus searching for a constant $M$ such that $(x^\downarrow_l - x^0_l)/\eta^\mathrm{d} - \bar \lambda_k - M \leq \varphi(x^0_l, x^\downarrow_l, \bar \lambda_k)$. Using a case distinction on $\upsilon_{2l}$ and $\bar \lambda_k$, we find $M = -\Delta \eta \, \ubar x$, see Table~\ref{tab:M}.
Inequality~\ref{proof:soc_ub:bilinear} models the constraint $\bar \Lambda_{kl} \geq -\bar \lambda_k x^0_l/x^\downarrow_l$ on $0 \leq x^0_l \le x^\downarrow_l$ for $x^\downarrow_l > 0$. If $\upsilon_{1l} + \upsilon_{2l} = 0$, then $\bar \Lambda_{kl} x^\downarrow_l + \bar \lambda_k x^0_l \geq 0$ is equivalent to the original constraint if $0 \le x^0_l \le x^\downarrow_l$ and $x^\downarrow_l > 0$, and trivially true if $x^0_l = x^\downarrow_l = 0$ as desired. If $\upsilon_{1l} + \upsilon_{2l} > 0$, the constraint may not be binding. We are thus searching for a constant $M$ such that
\begin{equation}
    \bar \Lambda_{kl} x^\downarrow_l + \bar \lambda_k x^0_l + M 
    \geq \bar \Lambda_{kl} x^\downarrow_l - x^\downarrow_l \varphi(x^0_l, x^\downarrow_l, \bar \lambda_k) \geq 0
    \iff
    M \geq - \bar \lambda_k x^0_l - x^\downarrow_l \varphi(x^0_l, x^\downarrow_l, \bar \lambda_k),
\end{equation}
which, similarly to before, can be found via case distinction on $\upsilon_{1l}$, $\upsilon_{2l}$, and $\bar \lambda_k$, see Table~\ref{tab:M}. \qedhere 
\end{proof}
\begin{table}
    \centering
    \resizebox{\textwidth}{!}{%
	\begin{tabular}{ll|llc}
    $\upsilon_{1l} = 0$ & & $\varphi(x^0_l, x^\downarrow_l, \bar \lambda_k)$
    & $(x^\downarrow_l - x^0_l)/\eta^\mathrm{d} - \bar \lambda_k - \varphi(x^0_l, x^\downarrow_l, \bar \lambda_k)$
    & $M$ \\
    \midrule
    & $0 \leq \bar \lambda_k \leq  \eta^\mathrm{c} x^\downarrow_l$
    & $\eta^\mathrm{c} (x^\downarrow_l - x^0_l) - \bar \lambda_k$
    & $\Delta \eta \, (x^\downarrow_l - x^0_l)$
    & $-\Delta \eta \, \ubar x$ 
    \\
    $\upsilon_{2l} = 1$
    & $\eta^\mathrm{c} x^\downarrow_l \leq \bar \lambda_k$
    & $- \eta^\mathrm{c} x^0_l$
    & $ x^\downarrow_l/\eta^\mathrm{d} - \Delta \eta \, x^0_l - \bar \lambda_k$
    & $-\Delta \eta \, \ubar x$ 
    \\
    $\upsilon_{2l} = 0$
    & $\eta^\mathrm{c} x^\downarrow_l \leq \bar \lambda_k \leq x^\downarrow_l/\eta^\mathrm{d} $
    & $- \bar \lambda_k x^0_l/x^\downarrow_l$
    & $ (x^\downarrow_l - x^0_l)/\eta^\mathrm{d} - (1 - x^0_l/x^\downarrow_l) \bar \lambda_k$
    & $-\Delta \eta \, \ubar x$
    \\
    $\upsilon_{2l} = 0$
    & $\bar \lambda_k \geq x^\downarrow_l/\eta^\mathrm{d} $
    & $-x^0/\eta^\mathrm{d}$
    & $ x^\downarrow_l /\eta^\mathrm{d} - \bar \lambda_k$
    & $0$ \\
    \midrule
    $\upsilon_{2l} = 1$ &&& $-x^0/\eta^\mathrm{d} - \varphi(x^0_l, x^\downarrow_l, \bar \lambda_k)$ \\
    \midrule
    & $0 \leq \bar \lambda_k \leq \eta^\mathrm{c} x^\downarrow_l$ & $\eta^\mathrm{c}(x^\downarrow_l - x^0_l) - \bar \lambda_k$ & $-\eta^\mathrm{c} x^\downarrow_l - \Delta \eta \, x^0_l + \bar \lambda_k$ & $-\Delta \eta \, \ubar x$ \\
    & $\bar \lambda_k \geq \eta^\mathrm{c} x^\downarrow_l$ & $-\eta^\mathrm{c} x^0_l$ & $-\Delta \eta \, x^0_l$ & $-\Delta \eta \, \ubar x$ \\
    \midrule
    $\upsilon_{1l} = 1 $ &&& $\eta^\mathrm{c}(x^\downarrow_l - x^0_l) - \bar \lambda_k - \varphi(x^0_l, x^\downarrow_l, \bar \lambda_k)$ \\
    \midrule
    & $0 \leq \bar \lambda_k \leq x^\downarrow_l/\eta^\mathrm{d}$ & $(x^\downarrow_l - x^0_l)/\eta^\mathrm{d} - \bar \lambda_k$ & $\Delta \eta \, (x^0_l - x^\downarrow_l)$ & $\Delta \eta \, \bar x$ \\
    & $\bar \lambda_k \geq x^\downarrow_l/\eta^\mathrm{d}$ & $ -x^0_l/\eta^\mathrm{d}$ & $\eta^\mathrm{c} x^\downarrow_l + \Delta \eta \, x^0_l - \bar \lambda_k$ & $\Delta \eta \, \bar x$ \\
    \midrule
    $\upsilon_{2l} = 0$ &&& $-\eta^\mathrm{c} x^0_l - \varphi(x^0_l, x^\downarrow_l, \bar \lambda_k)$ \\
    \midrule
    $\upsilon_{1l} = 1$ & $0 \leq \bar \lambda_k \leq x^\downarrow_l/\eta^\mathrm{d}$ & $(x^\downarrow_l - x^0_l)/\eta^\mathrm{d} - \bar \lambda_k$ & $\Delta \eta \, x^0_l - x^\downarrow_l/\eta^\mathrm{d} + \bar \lambda_k$ & $\Delta \eta \, \bar x$ \\
    & $\bar \lambda_k \geq x^\downarrow_l/\eta^\mathrm{d}$ & $ -x^0_l/\eta^\mathrm{d}$ & $\Delta \eta \, x^0_l$ & $\Delta \eta \, \bar x$ \\
    $\upsilon_{1l} = 0$ & $0 \leq \bar \lambda_k \leq \eta^\mathrm{c} x^\downarrow_l$ & $\eta^\mathrm{c} (x^\downarrow_l - x^0_l) - \bar \lambda_k$ & $-\eta^\mathrm{c} x^\downarrow_l + \bar \lambda_k$ & $0$ \\
    $\upsilon_{1l} = 0$ & $\eta^\mathrm{c} x^\downarrow_l \leq \bar \lambda_k \leq x^\downarrow_l/\eta^\mathrm{d}$ & $-\bar \lambda_k x^0_l/x^\downarrow_l$ & $-\eta^\mathrm{c} x^0_l + x^0_l/x^\downarrow_l \bar \lambda_k$ & $\Delta \eta \, \bar x$ \\
    \midrule
    &&& $-\bar \lambda_k x^0_l - x^\downarrow_l\varphi(x^0_l, x^\downarrow_l, \bar \lambda_k)$ \\
    \midrule
    $\upsilon_{1l} = 1$ & $0 \leq \bar \lambda_k \leq x^\downarrow_l/\eta^\mathrm{d}$ & $(x^\downarrow_l - x^0_l)/\eta^\mathrm{d} - \bar \lambda_k$ & $(x^0_l - x^\downarrow_l) ( x^\downarrow_l/\eta^\mathrm{d} - \bar \lambda_k )$ & $\bar x^2/4\eta^\mathrm{d}$ \\
    $\upsilon_{1l} = 1$ & $\bar \lambda_k \geq x^\downarrow_l/\eta^\mathrm{d}$ & $-x^0_l/\eta^\mathrm{d}$ & $x^0_l (x^\downarrow_l/\eta^\mathrm{d} - \bar \lambda_k)$ & $0$ \\
    $\upsilon_{2l} = 1$ & $0 \leq \bar \lambda_k \leq \eta^\mathrm{c} x^\downarrow_l$ & $\eta^\mathrm{c}(x^\downarrow_l - x^0_l) - \bar \lambda_k$ & $(x^\downarrow_l - x^0_l)(\bar \lambda_k - \eta^\mathrm{c} x^\downarrow_l)$ & $0$ \\
    $\upsilon_{2l} = 1$ & $\eta^\mathrm{c} x^\downarrow_l \leq \bar \lambda_k \leq (\bar x - \ubar x)/\eta^\mathrm{d} $ & $-\eta^\mathrm{c} x^0_l$ & $x^0_l(\eta^\mathrm{c} x^\downarrow_l - \bar \lambda_k)$ & $-\ubar x (\bar x - \ubar x)/\eta^\mathrm{d}$ 
    \end{tabular}
    }
    \caption{Minimum values of $M$ for which the optional constraints are valid.}
    \label{tab:M}
    \end{table}

\subsection{Finite-Dimensional Reformulation, Relaxation, and Restriction}

\begin{proof}[Proof of Theorem~\ref{th:P}]
    Theorem~\ref{th:P} follows immediately from Propositions~\ref{prop:charger}, \ref{prop:soc_lb}, and \ref{prop:soc_ub}.
\end{proof}

\begin{proof}[Proof of Proposition~\ref{prop:tractable}]
    We will consider the cases in order.
    \begin{enumerate}
        \item If $\xdn = 0$, constraints~\eqref{reformulation:soc_ub:upsilon1} and~\eqref{reformulation:soc_ub:upsilon2} imply that $\bm \upsilon_1 = 1 - \bm \upsilon_2$, which trivially satisfies the bilinear constraint~\eqref{reformulation:soc_ub:bilinear}. Hence, we recover a mixed-integer linear program.
        \item First, if $x^0 \leq 0$, it is optimal to fix the binary variables~$\upsilon_1 = 0$ and~$\upsilon_2 = 1$, which trivially satisfies the bilinear constraint~\eqref{reformulation:soc_ub:bilinear}. Second, if $K=1$, none of the mixed-integer constraints~\eqref{reformulation:soc_ub:upsilon1}--\eqref{reformulation:soc_ub:bilinear} are generated. Third, if $\eta^\mathrm{c} = \eta^\mathrm{d} = 1$, then $\Delta \eta = 0$. The binary variables thus only appear in the bilinear constraint~\eqref{reformulation:soc_ub:bilinear}, which becomes redundant, since we can choose~$\upsilon_1 = 1$ and $\upsilon_2 = 1$, without impacting the the objective function or any other constraint. In all cases we recover a linear program. \qedhere
    \end{enumerate}
\end{proof}

\begin{proof}[Proof of Proposition~\ref{prop:pres}]
    We first establish an upper bound on $\varphi(x^0_l, x^\downarrow_l, \bar \lambda_k)$, so that we will overestimate the SOC in the restricted problem~\eqref{pb:P:restrict}. Let
    \begin{equation}
       \bar \varphi(x^0_l, x^\downarrow_l, \bar \lambda_k)
       = \begin{cases}
            \max\big\{ -\frac{x^0_l}{\eta^\mathrm{d}}, \, \frac{x^\downarrow_l - x^0_l}{\eta^\mathrm{d}} - \bar \lambda_k \big\} & \text{if } x^0_l \geq x^\downarrow_l \text{ or } x^0_l \geq \big[ \frac{x^\downarrow_l - \eta^\mathrm{d} \bar \lambda_k}{1 - \eta^\mathrm{c} \eta^\mathrm{d}} \big]^+, \\
            \max\big\{ -\eta^\mathrm{c} x^0_l, \, \eta^\mathrm{c}(x^\downarrow_l - x^0_l) - \bar \lambda_k \big\} & \text{otherwise.}
        \end{cases}
    \end{equation}
    It is easy to verify that $\bar \varphi(x^0_l, x^\downarrow_l, \bar \lambda_k) = \varphi(x^0_l, x^\downarrow_l, \bar \lambda_k)$ except if $\varphi(x^0_l, x^\downarrow_l, \bar \lambda_k) = - \bar \lambda_k x^0_l / x^\downarrow_l$, which occurs if and only if
    $0 \leq x^0_l \leq x^\downarrow_l$, $x^\downarrow_l > 0$, and $\eta^\mathrm{c} x^\downarrow_l \leq \bar \lambda_k \leq x^\downarrow_l / \eta^\mathrm{d}$. In this case, $-\eta^\mathrm{c} x^0_l + \bar \lambda_k x^0_l/x^\downarrow_l$ is in $[0, \Delta \eta \, x^0_l]$ and $(x^\downarrow_l - x^0_l)/\eta^\mathrm{d} - \bar \lambda_k + \bar \lambda_k x^0_l/x^\downarrow_l$ is in $[0, \Delta \eta \, (x^\downarrow_l - x^0_l)]$. Thus, $\bar \varphi(x^0_l, x^\downarrow_l, \bar \lambda_k)$ is indeed an upper bound on $\bar \varphi(x^0_l, x^\downarrow_l, \bar \lambda_k)$ for all feasible $x^0_l$, $x^\downarrow_l$, and $\bar \lambda_k$. Next, we introduce the binary variable $\upsilon_{3l}$ to distinguish if the inequality $x^0_l \geq (x^\downarrow_l - \eta^\mathrm{d} \bar \lambda_k) / (1 - \eta^\mathrm{c} \eta^\mathrm{d})$ holds for $l = 1,\ldots,k-1$. If $\upsilon_{3l} = 0$, the inequality should hold, otherwise the reverse inequality should hold. Following similar steps as in Section~\ref{sec:M} in the proof of Proposition~\ref{prop:soc_ub}, yields the constraints~\eqref{reformulation:restriction}.    
\end{proof}

\begin{proof}[Proof of Proposition~\ref{prop:bound}]
The relaxed problem~\eqref{pb:P:relax} is obtained by deleting the bilinear constraints~\eqref{reformulation:soc_ub:bilinear} in problem~\eqref{pb:P}, which is equivalent to ignoring the bilinear-over-linear term in the $\varphi$-function. The corresponding lower bound $\ubar \varphi$ is given by
\begin{equation}
    \ubar \varphi(x^0_l, x^\downarrow_l, \bar \lambda_k)
    =
    \begin{cases}
        \max\big\{ -\eta^\mathrm{c} x^0_l, \, \eta^\mathrm{c} (x^\downarrow_l - x^0_l) - \bar \lambda_k \big\} & \text{if } x^0_l \leq 0, \\
        \max\big\{ -\frac{x^0_l}{\eta^\mathrm{d}}, \, \frac{x^\downarrow_l - x^0_l}{\eta^\mathrm{d}} - \bar \lambda_k \big\}
        & \text{if } x^\downarrow_l \leq x^0_l, \\
        \max\big\{ -\frac{x^0_l}{\eta^\mathrm{d}}, \, \eta^\mathrm{c} (x^\downarrow_l - x^0_l) - \bar \lambda_k \big\} 
        & \text{if } 0 \leq x^0_l \leq x^\downarrow_l.
    \end{cases}
\end{equation}
The maximum difference between the maximum SOC estimated in~\eqref{pb:P:restrict} and~\eqref{pb:P:relax}  is 
\begin{align}
    && \max_{(\bm x^0, \bm x^\downarrow) \in \set{X}, k \in \K} & \left[ \min_{0 \leq \bar \lambda^\mathrm{u} \leq \bar{\bar \lambda}} \gamma \bar \lambda^\mathrm{u} + \Delta t \sum_{l = 1}^{k-1} \bar \varphi(x^0_l, x^\downarrow_l, \bar \lambda^\mathrm{u}) + \Delta t \left[ \bar \varphi(x^0_k, x^\downarrow_k, \bar \lambda^\mathrm{u}) \right]^+ \right] \\
    && - & \left[ \min_{0 \leq \bar \lambda^\mathrm{l} \leq \bar{\bar \lambda}} \gamma \bar \lambda^\mathrm{l} + \Delta t \sum_{l = 1}^{k-1} \ubar \varphi(x^0_l, x^\downarrow_l, \bar \lambda^\mathrm{l}) + \Delta t \left[ \ubar \varphi(x^0_k, x^\downarrow_k, \bar \lambda^\mathrm{l}) \right]^+ \right] \notag \\
    \leq && \max_{(\bm x^0, \bm x^\downarrow) \in \set{X}, k \in \K} & \, \max_{0 \leq \bar \lambda \leq \bar{\bar \lambda}} \Delta t \sum_{l = 1}^{k-1} \bar \varphi(x^0_l, x^\downarrow_l, \bar \lambda) - \ubar \varphi(x^0_l, x^\downarrow_l, \bar \lambda) \\
    = && (T - \Delta t) \max_{x^0_l, x^\downarrow_l} & \, \max_{0 \leq \bar \lambda \leq \bar{\bar \lambda}} \left(  \bar \varphi(x^0_l, x^\downarrow_l, \bar \lambda) - \ubar \varphi(x^0_l, x^\downarrow_l, \bar \lambda) \right) ~~ \text{s.t.} ~~
    \ubar x \leq x^0_l - x^\downarrow_l, \,
    x^0_l \leq \bar x, \,
    x^\downarrow_l \geq 0,
\end{align}
where $\set{X}$ captures the bounds on power output and the nonnegativity of $\bm x^\downarrow$, and $\bar \varphi$ is the upper bound on the $\varphi$-function defined in Proposition~\ref{prop:pres}. The inequality holds because constraining $\bar \lambda^\mathrm{u}$ to be equal to $\bar \lambda^\mathrm{l}$ overestimates the first inner minimum and because  $[ \bar \varphi(x^0_k, x^\downarrow_k, \bar \lambda)]^+ = [ \ubar \varphi(x^0_k, x^\downarrow_k, \ubar \lambda)]^+ $ for all $(\bm x^0, \bm x^\downarrow) \in \set{X}$ and all $\bar \lambda \in [0, \bar{\bar \lambda}]$. The equality holds because $\ubar \varphi$, $\bar \varphi$, and $\set{X}$ do not directly depend on~$k$. The maximum difference between the upper and the lower bound on~$\varphi$ is given by
\begin{align}
    & \max_{x^0_l, x^\downarrow_l, \bar \lambda} \, \bar \varphi(x^0_l, x^\downarrow_l, \bar \lambda) - \ubar \varphi(x^0_l, x^\downarrow_l, \bar \lambda) 
    ~~ \text{s.t.} ~~
    \ubar x \leq x^0_l - x^\downarrow_l, \,
    x^0_l \leq \bar x, \,
    x^\downarrow_l \geq 0, \,
    0 \leq \bar \lambda \leq \frac{\bar x - \ubar x}{\eta^\mathrm{d}} \\
    = & \max_{x^0_l, x^\downarrow_l, \bar \lambda} \, \min\left\{ -\eta^\mathrm{c} x^0_l, \, \frac{x^\downarrow_l - x^0_l}{\eta^\mathrm{d}} - \bar \lambda \right\} - \max\left\{ -\frac{x^0_l}{\eta^\mathrm{d}}, \, \eta^\mathrm{c} (x^\downarrow_l - x^0_l) - \bar \lambda \right\} \\
    & ~~~ \text{s.t.} ~~~
    \ubar x \leq x^0_l - x^\downarrow_l, \,
    x^0_l \leq \bar x, \,
    0 \leq x^0_l \leq x^\downarrow_l, \,
    \eta^\mathrm{c} x^\downarrow_l \leq \bar \lambda \leq \frac{x^\downarrow_l}{\eta^\mathrm{d}} \\
    = & \max_{x^0_l, x^\downarrow_l, \bar \lambda} \, \min\left\{ \Delta \eta \, x^0_l, \, \bar \lambda - \eta^\mathrm{c} x^\downarrow_l, \, \frac{x^\downarrow_l}{\eta^\mathrm{d}} - \bar \lambda, \, \Delta \eta \, (x^\downarrow_l - x^0_l) \right\} \\
    & ~~~ \text{s.t.} ~~~
    \ubar x \leq x^0_l - x^\downarrow_l, \,
    x^0_l \leq \bar x, \,
    0 \leq x^0_l \leq x^\downarrow_l, \,
    \eta^\mathrm{c} x^\downarrow_l \leq \bar \lambda \leq \frac{x^\downarrow_l}{\eta^\mathrm{d}} \\
    = & \max_{x^0_l} \, \Delta \eta \, x^0_l 
    ~~ \text{s.t.} ~~
    x^0_l \leq \bar x, \, x^0_l \leq -\ubar x, \, x^0_l \leq \frac{\bar x - \ubar x}{1 + \eta^\mathrm{c} \eta^\mathrm{d}} \\
    = &
    \Delta \eta \cdot \min\left\{ -\ubar x, \, \bar x, \, \frac{\bar x - \ubar x}{1 + \eta^\mathrm{c} \eta^\mathrm{d}} \right\}.
\end{align}
The first equality holds as $\bar \varphi(x^0_l, x^\downarrow_l, \bar \lambda) \neq \ubar \varphi(x^0_l, x^\downarrow_l, \bar \lambda)$ only if $ 0 \leq x^0_l \leq x^\downarrow_l$ and $\eta^\mathrm{c} x^\downarrow_l \leq \bar \lambda \leq \frac{x^\downarrow_l}{\eta^\mathrm{d}}$. The second equality follows by inverting the sign of the inner maximization problem. The third equality holds because the minimum is maximized if $\bar \lambda - \eta^\mathrm{c} x^\downarrow_l = \frac{x^\downarrow_l}{\eta^\mathrm{d}} - \bar \lambda$ and $ \Delta \eta \, x^0_l = \Delta \eta \, (x^\downarrow_l - x^0_l)$.
\end{proof}

\subsection{Intraday Trading}
\begin{proof}[Proof of Proposition~\ref{prop:intraday}] The proof relies on the difference between the observed SOC at the end of each trading interval and the SOC induced by the arbitrage decisions only. For any $k \in \K$, let $\Delta y_k$ denote the difference at the end of the $k$-th trading interval. Without intraday adjustments, the difference evolves as
\begin{align}
    \Delta y_{k} - \Delta y_{k-1} = &
    \int_{(k-1)\Delta t}^{k\Delta t}
    \max\left\{
    \frac{x^0_k}{\eta^\text{d}}, \, \eta^\text{c} x^0_k
    \right\}
    - \max\left\{ \frac{\xi(\tau)x^\text{r}_k + x^0_k}{\eta^\text{d}}, \, \eta^\text{c} \left(\xi(\tau)x^\text{r}_k + x^0_k\right)
    \right\}
    \, \mathrm{d}\tau \\
    = & - x^\text{r}_{k} \int_{(k-1)\Delta t}^{k \Delta t} \xi(\tau) \, \mathrm{d}\tau,
\end{align}
where the second equality holds because we assumed that $\eta^\text{c} = \eta^\text{d} = 1$. We correct for the mismatch in the SOC by setting the intraday adjustment for period~$k+1$ to 
\begin{equation}
    x^\text{a}_{k+1} 
    = \frac{\Delta y_k - \Delta y_{\ubar i(k)}}{\Gamma'-\Delta t}
    = \sum_{i = \ubar i(k+1)}^k \frac{\Delta y_i - \Delta y_{i-1}}{\Gamma'-\Delta t} 
    = - \frac{1}{\Gamma'-\Delta t} \sum_{i = \ubar i(k+1)}^k x^\text{r}_{i} \int_{(i-1)\Delta t}^{i \Delta t} \xi(\tau) \, \mathrm{d}\tau,
\end{equation}
which ensures that the SOC difference with intraday adjustments evolves as
\begin{equation}
    \Delta y_k = - \sum_{i = \ubar i(k)}^k \frac{\Gamma' - (k-i+1)\Delta t}{\Gamma' - \Delta t} x^\text{r}_i \int_{(i-1)\Delta t}^{i \Delta t} \xi(\tau) \, \mathrm{d}\tau.
\end{equation}
Any feasible solution to problem~\eqref{pb:P} guarantees that the bounds on the SOC are respected for any SOC difference whose absolute value does not exceed $\sum_{i = \ubar i(k)}^k x^\text{r}_i \int_{(i-1)\Delta t}^{i \Delta t} \vert \xi(\tau) \vert \, \mathrm{d}\tau$. The intraday adjustments ensure that the bounds are respected because
\begin{equation}
    \left\vert \sum_{i = \ubar i(k)}^{k} \frac{\Gamma' - (k-i+1)\Delta t}{\Gamma'-\Delta t } x^\text{r}_i \int_{(i-1)\Delta t}^{i \Delta t} \xi(\tau) \, \mathrm{d}\tau \right\vert
    \leq \sum_{i = \ubar i(k)}^k x^\text{r}_i \int_{(i-1)\Delta t}^{i \Delta t} \vert \xi(\tau) \vert \, \mathrm{d}\tau.
\end{equation}
The bounds on the charging and discharging power are guaranteed to be respected if and only if
\begin{align}
    \ubar x \leq x^0_k + x^\text{a}_k - x^\text{r}_k,~~
    x^0_k + x^\text{a}_k + x^\text{r}_k \leq \bar x~~
    \forall k \in \K, ~ \forall \xi \in \Xi'.
\end{align}
For any fixed $k \in \K \setminus {K}$, we now compute the maximum and minimum values of $x^\text{a}_{k+1}$. We have 
\begin{align}
    \max_{\xi \in \Xi'} \, x^\text{a}_{k+1}
    = & \max_{\xi \in \Xi'} \, \frac{1}{\Gamma'-\Delta t} \sum_{i = \ubar i(k+1)}^k x^\text{r}_{i} \int_{(i-1)\Delta t}^{i \Delta t} \xi(\tau) \, \mathrm{d}\tau \\
    = & \max_{0 \leq \bm \xi \leq 1}\, \frac{\Delta t}{\Gamma'-\Delta t} \sum_{i = \ubar i(k+1)}^k \xi_i x^\text{r}_{i}
    ~ \text{s.t.}
    \sum_{i = \ubar i(k)}^k \xi_i \leq \frac{\gamma'}{\Delta t} \\
    = & \min_{\lambda_k \geq 0} \,  \frac{\gamma'}{\Delta t} \lambda_k + \sum_{i = \ubar i(k+1)}^k \left[ \frac{\Delta t}{\Gamma'-\Delta t} x^\text{r}_i - \lambda_k \right]^+.
\end{align}
The third equality follows from standard linear programming duality. The minimum value of $x^\text{a}_{k+1}$ can be derived in a similar manner thanks to the symmetry of $\Xi'$.
\end{proof}
\linespread{1}
\small
\addcontentsline{toc}{section}{References (Appendix)}
\putbib[_mybib]
\end{bibunit}
\end{document}